\documentclass[10pt]{amsart}
\usepackage{seaweed}
\usetikzlibrary{backgrounds}

\title{Combinatorics of Seaweed Meanders}
\address{Massachusetts Institute of Technology}
\email{maxlu@mit.edu}
\begin{document}

\begin{abstract}
    We show that the index of a uniformly random seaweed meander satisfies a central limit theorem and is asymptotically equidistributed with respect to any modulus. We also extend work by Archer, Geary, and Laudone on the enumeration of seaweed meanders whose compositions have restricted parts. Finally, we give a connection to billiards in triangular shapes, which is related to a result of Eu, Fu, and Hsu.
\end{abstract}
\maketitle

\section{Introduction}

Seaweed meanders were introduced by~Dergachev and Kirillov~\cite{Dergachev2000} in connection with \textit{seaweed subalgebras} of $\gl_n$. They are defined combinatorially as follows.

A composition $\lambda$ of $n$ is a sequence of positive integers with sum $n$. 
Seaweed subalgebras of $\gl_n$ are indexed by pairs of compositions. Given an ordered pair $(\lambda, \mu)$ of compositions of $n$, we construct the associated seaweed meander $\frac{\lambda}{\mu}$, which we view as a (not necessarily simple) graph on $n$ vertices, in the following way.

\begin{itemize}
    \item Draw $n$ vertices in a row.
    \item For each part $m$ of $\lambda$, in order, draw a family of nested arcs above the row, by taking the block of the next $m$ consecutive vertices, connecting the first and last vertices, second and second-to-last, and so on.
    \item Do the same for $\mu$ below the row of vertices.
\end{itemize}

More formally, suppose $\lambda = (\lambda_1, \dots, \lambda_r)$ and $\mu = (\mu_1, \dots, \mu_s)$. Take $n$ vertices in a row, labelled $1$ to $n$ from left to right. Then for each $1 \le i \le r$ and $1 \le j \le \frac{\lambda_i}{2}$, add the edge
\[(\lambda_1 + \dots + \lambda_{i - 1} + j, \lambda_1 + \dots + \lambda_{i - 1} + \lambda_i + 1 - j)\] above the row of vertices.
Likewise, for each $1 \le i' \le s$ and $1 \le j' \le \frac{\mu_{i'}}{2}$, add the edge
\[(\mu_1 + \dots + \mu_{i' - 1} + j', \mu_1 + \dots + \mu_{i' - 1} + \mu_{i'} + 1 - j')\] below the row.

\begin{ex}
    The seaweed meander $\frac{\lambda}{\mu}$ with $n = 7$, $\lambda = (4, 3)$, and $\mu = (2, 1, 1, 3)$ is given in~\cref{fig:seaweed-ex}. We also denote this seaweed meander symbolically as $\frac{4 | 3}{2 | 1 | 1 | 3}$.
    \begin{figure}[!ht]
        \begin{center}
            \resizebox{0.3\textwidth}{!}{%
                \begin{tikzpicture}[every path/.style={draw, very thick}]
                    \foreach \x in {1,...,7}
                        \draw[fill] (\x, 0) circle (0.15);
                    \draw (1,0) arc (180:0:1.5);
                    \draw (2,0) arc (180:0:0.5);
                    \draw (5,0) arc (180:0:1);
                    \draw (1,0) arc (-180:0:0.5);
                    \draw (5,0) arc (-180:0:1);
                \end{tikzpicture}
        }
        \end{center}
        \caption{The seaweed meander $\frac{4 | 3}{2 | 1 | 1 | 3}$.}
        \label{fig:seaweed-ex}
    \end{figure}
\end{ex}

Our results mainly concern the \textit{index} of a seaweed meander.

\begin{definition}\label{def:index}
    The index of a seaweed meander $\frac{\lambda}{\mu}$ is equal to the number of paths plus twice the number of cycles, where isolated vertices are counted as paths.
\end{definition}

\begin{ex}
    The index of the seaweed meander in~\cref{fig:seaweed-ex} is $4$, as there is one cycle and two paths (including the isolated vertex $6$).
\end{ex}

The definition of the index of a seaweed meander is motivated by connections to the Lie algebras. In particular, Dergachev and Kirillov~\cite{Dergachev2000} prove that the index of the associated Lie subalgebra in $\gl_n$ (which is an important statistic) is equal to the index given in~\cref{def:index}.

We also consider connections to a related class of objects known as \textit{meandric systems}.
A \textit{non-crossing arc diagram} of size $2n$ is a perfect matching on $[2n]$ such that there do not exist two pairs $(a, b)$ and $(c, d)$ with $a < c < b < d$ (i.e. no two arcs intersect). A \textit{meandric system} of size $2n$ consists of $2n$ vertices in a row, with non-crossing arc diagrams drawn above and below the vertices. We give an example in~\cref{fig:meander-ex}.

\begin{figure}[!ht]
    \begin{center}
        \resizebox{0.3\textwidth}{!}{%
            \begin{tikzpicture}[every path/.style={draw, very thick}]
                \foreach \x in {1,...,8}
                    \draw[fill] (\x, 0) circle (0.15);
                \draw (1,0) arc (180:0:2.5);
                \draw (2,0) arc (180:0:0.5);
                \draw (7,0) arc (180:0:0.5);
                \draw (4,0) arc (180:0:0.5);
                \draw (1,0) arc (-180:0:3.5);
                \draw (2,0) arc (-180:0:1.5);
                \draw (3,0) arc (-180:0:0.5);
                \draw (6,0) arc (-180:0:0.5);
            \end{tikzpicture}
    }
    \end{center}
    \caption{A meandric system of size $8$.}
    \label{fig:meander-ex}
\end{figure}

\begin{remark*}
    Note that whenever $\lambda$ or $\mu$ has an odd part, the corresponding nested arcs for that part have an isolated vertex at the center. Hence, a seaweed meander $\frac{\lambda}{\mu}$ is a meandric system if and only if both $\lambda$ and $\mu$ have only even parts.
\end{remark*}

Meandric systems have previously been studied by Arnol'd~\cite{Arnold1988ABC} and Di Francesco, Golinelli, and Guitter~\cite{francesco_meanders_1997}. One question of particular interest is the enumeration of \textit{closed meanders}, which are meandric systems with exactly one connected component. Lando and Zvonkin~\cite{lando1993plane} give a survey of various results for closed meanders and meandric systems, and a more recent perspective is given by Zvonkin in~\cite{ZVONKIN2024103817}.

\subsection{Notation}

We now define some notation which will be used to state our main results.

A composition $\lambda$ of $n$ is denoted by $\lambda \vDash n$. For a composition $\lambda \vDash n$, let $c\lambda \vDash cn$ be the composition given by scaling each term of $\lambda$ by $c$.

For two compositions $\lambda, \mu \vDash n$, the seaweed meander associated to this pair of compositions is denoted by $\frac{\lambda}{\mu}$. The index of this seaweed meander is written $\ind\left(\frac{\lambda}{\mu}\right)$, and we say that this seaweed meander has size $n$.

We take the convention that the empty seaweed meander, which has size $0$, has index $0$.

Let $\abs{M}$ denote the size of a seaweed meander $M$ and let $\calM_n$ denote the set of all seaweed meanders of size $n$. For two seaweed meanders $\frac{\lambda_1}{\mu_1}$ and $\frac{\lambda_2}{\mu_2}$, their direct sum is
\[\frac{\lambda_1}{\mu_1} \oplus \frac{\lambda_2}{\mu_2} = \frac{\lambda_1 | \lambda_2}{\mu_1 | \mu_2},\] that is, the concatenation of the two seaweed meanders.
For example, in~\cref{fig:seaweed-ex}, we have
\[\frac{4 | 3}{2 | 1 | 1 | 3} = \frac{4}{2 | 1 | 1} \oplus \frac{3}{3}.\]
Note that taking the direct sum of two seaweed meanders adds their indices, since the direct sum gives a disjoint union of the graphs.

The seaweed meander $\frac{\lambda}{\mu}$ is \textit{indecomposable} if it cannot be written as the direct sum of two smaller seaweed meanders.
Equivalently, a seaweed meander $\frac{\lambda}{\mu} \in \calM_n$ is indecomposable if $\lambda$ and $\mu$ have no common partial sums other than $0$ and $n$.

We use $\NN$ to denote the set of positive integers $\{1, 2, \dots\}$. We use $\convd$ for convergence in distribution, and $\calN(0, 1)$ for the standard normal distribution.

\subsection{Main Results}

We consider seaweed meanders from several perspectives.
We prove two probabilistic results on index distribution (\cref{thm:clt-seaweed,thm:equidistrib}), for which we make a connection to renewal-reward processes, by using a decomposition of seaweed meanders into components. We also prove a result on generating functions for seaweed meanders with restricted parts (\cref{thm:rational}), building on work by Archer, Geary, and Laudone~\cite{archer2025}. Finally, we show a connection to billiards in triangular shapes (\cref{thm:triangle-billiard}), generalizing a result by Eu, Fu, and Hsu~\cite{eu2025} from square lattices to triangular lattices.

We first discuss the distribution of the index of a uniformly random seaweed meander of size $n$. Kargin~\cite{Kargin_2020} has considered the analogous question for a uniformly random meandric system of size $2n$, conjecturing that the distribution of the number of cycles is asymptotically Gaussian.
F\'eray and Th\'evenin~\cite{feray2023} and Janson and Th\'evenin~\cite{Janson_Thévenin_2024} make partial progress towards answering this question, but the full conjecture remains open.

However, the case of seaweed meanders is easier to analyze, because a uniformly random seaweed meander decomposes (with high probability) into small indecomposable components, while a uniformly random meandric system does not exhibit such structure.

\begin{theorem}\label{thm:clt-seaweed}
    Let $X_n$ be the index of a uniformly random seaweed meander $M \in \calM_n$. Then $X_n$ satisfies a central limit theorem, that is, there exist constants $a$ and $b > 0$ such that
    \[\frac{X_n - an}{b\sqrt{n}} \convd \calN(0, 1).\]
\end{theorem}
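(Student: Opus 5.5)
A uniformly random seaweed meander of size $n$ is a pair of independent uniformly random compositions of $n$. I will encode each composition of $n$ by its set of partial sums, a subset of $\{1,\dots,n-1\}$, and choose that subset by including each element independently with probability $1/2$. This gives the uniform distribution, since there are $2^{n-1}$ compositions of $n$, each equally likely.

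The decomposition into indecomposable components is then governed by the set of common partial sums, the positions $k\in\{1,\dots,n-1\}$ lying in both subsets. Each $k$ is a common partial sum independently with probability $1/4$. Hence the common partial sums form a Bernoulli renewal process with geometric gaps: component sizes $L_1,L_2,\dots$ are i.i.d.\ with $\Pr(L=\ell)=\frac14\left(\frac34\right)^{\ell-1}$, conditioned on the last component ending exactly at $n$. Since the renewal times of a Bernoulli process are exactly its marked points, the event "$n$ is a renewal time" carries no bias beyond truncation: the component boundaries among $\{1,\dots,n-1\}$ are i.i.d.\ Bernoulli$(1/4)$.

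Given a component of length $\ell$, the pair of compositions restricted to it is a uniformly random indecomposable seaweed meander of size $\ell$. This holds because, conditioned on the pattern of common partial sums, the remaining positions are independent and uniform over the three non-common states. The index is additive over direct sums, so
\[
X_n=\sum_{i=1}^{N(n)} R_i,
\]
where the pairs $(L_i,R_i)$ are i.i.d., $R_i$ is the index of a uniform indecomposable meander of size $L_i$, and $N(n)$ is the number of renewals up to $n$. This is exactly a renewal-reward process.

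I will then apply the standard CLT for renewal-reward processes. If $\mathbb{E}L^2<\infty$ and $\mathbb{E}R^2<\infty$, then
\[
\frac{\sum_{i\le N(n)}R_i-an}{b\sqrt n}\convd \calN(0,1),
\]
with $a=\mathbb{E}R/\mathbb{E}L$ and $b^2=\operatorname{Var}(R-aL)/\mathbb{E}L$. The moment conditions are easy. $L$ is geometric, and $0\le R\le L$, because the index is at most the number of vertices: each path contains at least one vertex, and each cycle contains at least two. Boundary effects are also harmless. Here the renewal sequence ends exactly at $n$, so there is no partial last component to control. One can derive the CLT directly, by writing the event $\{N(n)=m\}$ in terms of the partial sums of $L_i$ and using the bivariate CLT for $(\sum L_i,\sum R_i)$ together with Anscombe's theorem.

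The main obstacle is proving $b>0$, that is, that $R-aL$ is not almost surely constant. It suffices to exhibit two possible values of $(L,R)$ that are not collinear with the origin. Size $1$ gives the meander $\frac{1}{1}$, with $(L,R)=(1,1)$. Size $2$ gives $\frac{2}{1|1}$, a single path, with $(L,R)=(2,1)$. These two points satisfy $R=aL$ for no single constant $a$, so $R-aL$ is non-degenerate and $b>0$, which proves the theorem.
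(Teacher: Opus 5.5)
You have the structural facts right: the common partial sums are i.i.d.\ Bernoulli$(1/4)$, components are uniform indecomposables given their sizes, $0\le R\le L$, and your nondegeneracy check via $\frac{1}{1}$ and $\frac{2}{1\mid 1}$ is valid. The gap is in how you invoke the renewal-reward CLT, and it is exactly the step you wave off as harmless.

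\textbf{The gap.} The identity $X_n=\sum_{i\le N(n)}R_i$ holds only \emph{conditionally on the event that $n$ is a renewal epoch}. Under that conditioning the pairs $(L_i,R_i)$, $i\le N(n)$, are no longer i.i.d. For example, $\PP[L_1=n\mid n \text{ is a renewal}]=(3/4)^{n-1}$, whereas unconditionally $\PP[L_1=n]=\frac14(3/4)^{n-1}$. So this is not ``exactly a renewal-reward process.''
\begin{itemize}
\item The renewal-reward CLT describes the \emph{unconditional} law of $R(t)$.
\item A distributional limit need not survive conditioning on an event of fixed positive probability; think of conditioning a standard normal on being positive.
\item Your fallback does not close this. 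Anscombe's theorem plus a bivariate CLT for $(\sum L_i,\sum R_i)$ handles the random index $N(n)$ in the \emph{unconditioned} process. Conditioning on $\sum_{i\le m}L_i=n$ exactly would need a local limit theorem, which you do not supply.
\end{itemize}
So ``there is no partial last component to control'' has it backwards: the conditioning on $n$ being a renewal \emph{is} the boundary effect.

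\textbf{The fix.} It is short, and it is what the paper does.
\begin{enumerate}
\item Remove the last indecomposable component of the size-$n$ meander. What remains has index $R(n-1)=\sum_{i\le N(n-1)}R_i$.
\item $R(n-1)$ is a function of $A\cap[1,n-1]$ and $B\cap[1,n-1]$ alone. Hence it is independent of the event $\{n\in A\cap B\}$, has the unconditional renewal-reward law, and the CLT applies to it directly.
\item The removed component has size $\min(\Geom(1/4),n)$ and index at most its size. So it contributes $O_p(1)=o_p(\sqrt n)$.
\item Slutsky's theorem, which also absorbs $n-1$ versus $n$ in the centering and scaling, transfers the CLT to $X_n$.
\end{enumerate}
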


We also obtain a result for the distribution of the index modulo any positive integer, showing convergence to the uniform distribution.

\begin{theorem}\label{thm:equidistrib}
    Fix $m \in \NN$ and $0 \le r < m$. Let $M_n$ be a uniformly random seaweed meander in $\calM_n$. Then there exists a positive constant $c = c(m) < 1$, depending only on $m$, such that
    \[\abs{\PP[\ind(M_n) \equiv r \bmod m] - \frac{1}{m}} = O(c ^ n).\]
\end{theorem}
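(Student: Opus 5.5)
We prove \cref{thm:equidistrib} with generating functions, using the decomposition into indecomposable components. Let $F(x,y)=\sum_n\sum_{M\in\calM_n} x^n y^{\ind(M)}$ and let $G(x,y)$ be the analogous sum over indecomposable seaweed meanders of positive size. Every seaweed meander is uniquely a direct sum of indecomposable ones, and the index is additive under direct sums, so
\[F(x,y)=\frac{1}{1-G(x,y)}.\]
Since $\abs{\calM_n}=4^{n-1}$ for $n\ge1$, we have $F(x,1)=1+\frac{x}{1-4x}$, and hence $G(x,1)=1-1/F(x,1)=\frac{x}{1-3x}$. In particular there are $3^{n-1}$ indecomposables of size $n$. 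Let $\omega=e^{2\pi i k/m}$ with $1\le k<m$. By the roots of unity filter,
\[\PP[\ind(M_n)\equiv r \bmod m]-\frac1m=\frac1m\sum_{k=1}^{m-1}\omega^{-kr}\frac{[x^n]F(x,\omega^k)}{4^{n-1}}.\]
So it suffices to show that $[x^n]F(x,\zeta)=O((4c)^n)$ for some $c<1$, for each fixed $m$-th root of unity $\zeta\ne1$.

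The function $G(x,\zeta)$ has nonnegative-dominated coefficients, $\abs{[x^n]G(x,\zeta)}\le 3^{n-1}$. It is therefore analytic on $\abs{x}<1/3$, which contains the disk $\abs{x}\le 1/4+\epsilon$ for small $\epsilon>0$. The function $F(x,\zeta)=1/(1-G(x,\zeta))$ is analytic on this disk exactly when $G(x,\zeta)\neq1$ there. If this holds, Cauchy's estimate gives $[x^n]F=O((1/4+\epsilon)^{-n})$, which is what we need.

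The main step is ruling out $G(x,\zeta)=1$ for $\abs{x}\le 1/4$. By the triangle inequality,
\[\abs{G(x,\zeta)}\le G(\abs{x},1)\le G(1/4,1)=1,\]
with equality only if $\abs{x}=1/4$ and all terms $x^n\zeta^{\ind M}$ over indecomposable $M$ are aligned with the positive real axis. Since $x^1$ is one of the terms (the single vertex $\frac{1}{1}$, of index $1$), alignment forces $x\zeta=1/4$. It then suffices to exhibit two indecomposables of the same size whose indices differ by an amount $d$ with $\zeta^d\ne1$, or more generally sizes and indices spanning enough of $\ZZ^2$ that the phases cannot all coincide.

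For instance, at size $2$ we have $\frac{2}{1|1}$ with index $1$ and $\frac{1|1}{2}$ with index $1$. At size $3$, $\frac{3}{1|2}$ and $\frac{3}{2|1}$ both have index $1$, while $\frac{2|1}{1|2}$ is a path on three vertices, also index $1$, and $\frac{3}{3}$ is decomposable-free of index $2$? Rather than hunt case by case, I would use the family of indecomposables of size $n$ with index $1$ (for example $\frac{n}{1|(n-1)}$-type meanders, checked via a short gcd argument) together with one indecomposable of size $n$ with index $2$, such as $\frac{2k}{k|k}$-type examples. These give $\zeta^{1}x^n$ and $\zeta^2x^n$ both positive, forcing $\zeta=1$, which is a contradiction.

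Once $G(x,\zeta)\neq1$ on the closed disk $\abs{x}\le 1/4$, compactness and continuity extend this to $\abs{x}\le 1/4+\epsilon$, since $G$ is analytic up to radius $1/3$. Taking the worst $k$ gives a constant $c(m)<1$. The hardest part is producing explicit indecomposables of equal size with indices differing by $1$ and verifying their indices. I would first try small cases computed by hand, such as $\frac{4}{1|3}$ versus $\frac{4}{2|2}$, and check indecomposability and index directly from the drawings.
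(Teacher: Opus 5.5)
Your argument is correct, and it takes a genuinely different route from the paper.

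\textbf{The paper's route.} The paper stays probabilistic. It groups meanders into classes that agree away from their size-$2$ components. Independent indicators at positions $3i$ plus a Chernoff bound give $f_2(M_n)\ge\varepsilon n$ with probability $1-e^{-\Omega(n)}$. Resampling each size-$2$ component uniformly among $\frac{2}{2}$, $\frac{2}{1 | 1}$, $\frac{1 | 1}{2}$ is then a lazy random walk on $\ZZ/m\ZZ$, which mixes exponentially fast.

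\textbf{Your route.} You use the sequence schema $F=1/(1-G)$ and get $G(x,1)=x/(1-3x)$ from $\abs{\calM_n}=4^{n-1}$. So $G(\cdot,1)$ reaches $1$ at $x=1/4$, strictly inside its radius of convergence $1/3$. The triangle-inequality (aperiodicity) argument then rules out roots of $1-G(x,\zeta)$ on $\abs{x}\le 1/4$ for $\zeta\ne 1$, and compactness plus Cauchy's estimate finish.

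\textbf{Comparison.} Both proofs rest on the same combinatorial input: two indecomposables of equal size whose indices differ by $1$. Yours is shorter and avoids the Chernoff bound and the appeal to Markov chain mixing. It also identifies the rate: $c(m)=\max_{\zeta}1/(4\rho_\zeta)$, where $\rho_\zeta$ is the smallest modulus of a root of $G(x,\zeta)=1$. Singularity analysis of this kind also lends itself to sharper limit laws. The paper's proof needs no closed form for $G$ and no complex analysis. It also exhibits a transparent mechanism: linearly many independent size-$2$ components acting as randomizers.

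\textbf{Corrections to your examples.}
\begin{itemize}
\item $\frac{3}{3}$ has index $3$, not $2$: it is a $2$-cycle on $\{1,3\}$ plus the isolated vertex $2$.
\item $\frac{2k}{k | k}$ has index $k$, so it has index $2$ only when $k=2$.
\item The step you call the hardest is immediate. $\frac{2}{2}$ (index $2$) and $\frac{2}{1 | 1}$ (index $1$) are both indecomposable of size $2$, which is the pair the paper uses. Requiring $x^2\zeta^2>0$ and $x^2\zeta>0$ forces $\zeta=1$, without even using the size-$1$ term.
\item Your pair $\frac{4}{1 | 3}$ (the path $1$--$4$--$2$--$3$, index $1$) and $\frac{4}{2 | 2}$ (a $4$-cycle, index $2$) also works.
\item In the roots-of-unity filter, set $\omega=e^{2\pi i/m}$. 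As written, the letter $k$ is used twice.
\end{itemize}
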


We now consider the setting where $\lambda$ and $\mu$ have parts restricted to a fixed set $J$, as previously studied by Archer, Geary, and Laudone~\cite{archer2025}.

\begin{definition}
    Fix $J \subseteq \ZZ$. 
    Define $J ^ + = J \cap \NN$ and $J ^ - = J \setminus J ^ +$.
    Let $a_{n, k} ^ J$ be the number of seaweed meanders $\frac{\lambda}{\mu}$ of size $n$ and index $k$, where the parts of $\lambda$ and $\mu$ are restricted to the set $J_n$ defined by
    \[J_n = J ^ + \cup (n + J ^ -).\]    
    Define the generating function 
    \[A ^ J(x, y) = \sum_{n, k \ge 0} a_{n, k} ^ J x ^ n y ^ k.\]
    When $J$ is finite, say that an element of $J_n$ is \textit{large} if it is in $n + J ^ -$, and \textit{small} otherwise.
\end{definition}

Archer, Geary, and Laudone~\cite{archer2025} compute $A ^ J(x, y)$ explicitly for some choices of finite $J$. These include $J = \{1, j\}$ for $j = 2, 3, 4, 5, 7$ (for which the indecomposable seaweed meanders of size $n > j$ are necessarily acyclic) and $J = \{0, 1, 2\}$.
They suggest that the sets $J = \{-1, 1, 2\}$, $J = \{1, 6\}$, and $J = \{0, 1, 3\}$  may be interesting to consider, and we compute their corresponding generating functions in~\cref{appendix:bashing}.

\begin{remark*}
    Note that Archer, Geary, and Laudone index the generating functions by $J_n$, rather than $J$. For example, they write $A ^ {\{1, 2, n\}}(x, y)$ instead of $A ^ {\{0, 1, 2\}}(x, y)$.
\end{remark*}

Archer, Geary, and Laudone~\cite{archer2025} derive recurrences to compute rational generating functions for each aforementioned acyclic $J$. The rationality of these generating functions is in fact representative of a general result, as given by the following theorem.

\begin{theorem}\label{thm:rational}
    For any finite set $J$, the generating function $A ^ J(x, y)$ is rational.
\end{theorem}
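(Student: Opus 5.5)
The plan is to split according to how many large parts occur, show that each case has a rational generating function, and add the results. Let $K=\max(\abs{J^+}\cup\abs{J^-})$, so small parts are at most $K$ and large parts are at least $n-K$. For $n>2K$, a composition of $n$ with parts in $J_n$ has at most one large part. If it has one, the remaining small parts sum to some $c\le K$ and lie before or after the large part. The finitely many $n\le 2K$ contribute only a polynomial. So for each of $\lambda,\mu$ we only need to handle the case of all small parts, and the finitely many ``shapes'' consisting of a prefix composition of $a$, one large part $n-c$, and a suffix composition of $c-a$. The full generating function is a finite sum over these shapes.

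\emph{Both compositions small.} Every arc has length less than $K$, so the graph has bandwidth less than $K$. I would build a transfer matrix that reads the vertices from left to right. The state records three things: the current offset inside the active top block and its length, the same for the bottom block, and how the last $K$ vertices are connected. Since every vertex has degree at most $2$, that last piece is a pairing of open path endpoints together with the list of pending arc endpoints. There are finitely many states. Each transition appends a vertex with weight $x$. Whenever a component is closed off, the transition contributes $y$ for a path or $y^2$ for a cycle. Then $A$ restricted to this case is $u^{T}(I-xT(y))^{-1}v$ for suitable vectors $u,v$, which is rational.

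\emph{Exactly one large part, say in $\lambda$.} The big block of nested arcs joins position $p+i$ to its mirror $p+L+1-i$. I would fold the row at the centre of this block, which turns the big block into ``columns'' $\{v,v'\}$. The bottom composition $\mu$ has only small parts, so reading $\mu$ simultaneously from the left end and from the right end gives a two-tape scan with bounded window. The bounded prefix and suffix of $\lambda$ only affect a bounded region at the two ends. The state now records the offsets in the current bottom block at both ends and the connectivity of the $O(K)$ open vertices at both ends. When the two scans meet in the middle, a finite set of terminal gluing configurations, depending on the parity and the offset, closes everything up. This is again a finite transfer matrix, now with step weight $x^2$ plus boundary corrections, so the generating function is rational. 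The case where the large part lies in $\mu$ is symmetric.

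\emph{Both compositions large.} For a fixed shape, only $n$ varies. The graph is generated by two reflections whose centres differ by a bounded amount, together with bounded perturbations near the two ends. The composition of the two reflections is a translation by a bounded step $s$. Following components through this translation, as in a Euclid-type reduction (compare the example $\ind\frac{a|n-a}{n}$, which depends on $\gcd(a,n)$ and so is periodic in $n$), I would show that the number of paths and cycles is eventually a quasi-linear function of $n$ with period dividing $s$ or $2s$. Then $\sum_n x^n y^{\ind}$ is a finite sum of geometric-type series, hence rational.

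I expect this last case, together with the middle gluing in the one-large-part case, to be the main obstacle. The difficulty is verifying carefully that the interaction of two long reflections, or of a reflection with a bounded-width strip, really yields finitely many states or an eventually periodic index. I would attempt this first by explicitly analysing the orbit structure of the group generated by the two reflections on $\{1,\dots,n\}$.
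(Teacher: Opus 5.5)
Your outline matches the paper's proof case by case. It uses the same reduction to at most one large part per composition with finitely many shapes. It handles the all-small case with a left-to-right transfer matrix, and the one-large case with a folded two-ended scan; this is the paper's interleaved input $g(\lambda)$, with an end-of-input symbol for the middle gluing. For each pair of shapes in the two-large case, it uses eventual linearity or periodicity of the index in $n$.

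The step you leave as a plan, the two-large case, is exactly \cref{prop:big-parts-periodic}. The paper proves it in two parts:
\begin{itemize}
\item When the two centres coincide (your $s=0$), the index increases by exactly $1$ per unit increase of $n$.
\item Otherwise, an explicit map $M_n \to M_{n+4\ell}$, with $4\ell$ equal to your $2s$, inserts $2\ell$ vertices in each of two bulk windows. Every path or cycle of $M_n$ stretches to one of the same type, so the index is unchanged.
\end{itemize}
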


Finally, we discuss a connection between seaweed meanders and billiards. 

A \textit{chessboard polygon} is a polygon whose edges lie on a square lattice. To define billiard trajectories in a chessboard polygon, we fire a laser starting at the midpoint of each unit segment along the boundary at an angle of $45 \dg$ to the grid, and follow the resulting reflections. The resulting paths are the \textit{billiard trajectories} of the chessboard polygon.

Likewise, for any shape lying on an equilateral triangular grid, we may also define billiard trajectories. We again begin at the midpoints of unit segments, but now fire a laser at a $60 \dg$ angle to the corresponding edge.

An example of billiard trajectories in both square and triangular grids is given in~\cref{fig:billiards-ex}.

\begin{figure}[!ht]
    \begin{center}
        \resizebox{0.6\textwidth}{!}{%
            \begin{tikzpicture}[every path/.style={draw, very thick}, subfig/.style={}]
                \node[subfig, anchor=east] (A) at (0,0){
                    \begin{tikzpicture}
                        \draw (0,0)--(0,1)--(1,1)--(1,2)--(3,2)--(3,0)--cycle;
                        \draw[gray, dotted] (1,0)--(1,1)--(3,1);
                        \draw[gray, dotted] (2,0)--(2,2);
                        \draw[red, dashed] (0.5,0)--(0,0.5)--(0.5,1)--(1.5,0)--(3,1.5)--(2.5,2)--cycle;
                        \draw[blue, dashed] (2.5,0)--(3,0.5)--(1.5,2)--(1,1.5)--cycle;
                    \end{tikzpicture}
                };
                \node[subfig, anchor=west] (B) at (1,0){
                    \begin{tikzpicture}
                        \draw (0,0)--++(60:2)--++(-60:2)--++(-120:1)--++(-1,0)--++(120:1);
                        \draw[gray, dotted] (60:1)--++(1,0)--++(-120:2);
                        \draw[gray, dotted] (0,0)--(2,0);
                        \draw[gray, dotted] (60:1)--++(-60:2);
                        \draw[red, dashed] (60:1.5)--++(0.5,0)--++(-120:2)--++(1.5,0)--cycle;
                        \draw[blue, dashed] (60:0.5)--++(1.5,0)--++(-120:1.5)--cycle;
                    \end{tikzpicture}
                };
            \end{tikzpicture}
        }
    \end{center}
    \caption{Billiard trajectories in shapes on square and triangular lattices. Both shapes have two billiard trajectories, one in red and one in blue.}
    \label{fig:billiards-ex}
\end{figure}

Eu, Fu, and Hsu~\cite{eu2025} prove that for a meandric system $M$, there exists a chessboard polygon constructed using the Dyck paths associated to the upper and lower arc diagrams such that the connected components of $M$ are in bijection with billiard trajectories of the chessboard polygon.

We prove that there exists a similar connection between seaweed meanders and billiard trajectories in triangular shapes, which we state more precisely as~\cref{thm:triangle-billiard} after introducing some further notation.

\subsection{Organization}

In~\cref{sec:distrib}, we prove~\cref{thm:clt-seaweed,thm:equidistrib} by first using a decomposition into indecomposable components, then connecting the index of a uniformly random seaweed meander to a renewal-reward process. We prove~\cref{thm:rational} in~\cref{sec:genfunc} using the transfer-matrix method, by exhibiting a finite automaton which can compute the index of a seaweed meander. In~\cref{appendix:bashing}, we answer a question of Archer, Geary, and Laudone~\cite{archer2025} by performing some explicit computations inspired by this method. The connection between seaweed meanders and triangular billiards is made formal and proven in~\cref{sec:billiards}. Finally, we ask a number of further questions in~\cref{sec:questions}.
\section{Index Distribution of Seaweed Meanders}\label{sec:distrib}

We first prove the central limit theorem for seaweed meanders stated in~\cref{thm:clt-seaweed}, then use the same ideas to prove the equidistribution result given in~\cref{thm:equidistrib}.

\subsection{Central Limit Theorem}
The idea for the proof of~\cref{thm:clt-seaweed} is to first show that truncating a uniformly random seaweed meander by removing the last indecomposable component changes the index by only $o(\sqrt{n})$, with high probability. We then translate the structure of the truncated seaweed meander into the language of renewal-reward processes in order to apply a central limit theorem for renewal-reward processes.

Note that sampling a uniformly random composition $\lambda \vDash n$ is equivalent to independently choosing with probability $\half$ whether each $i \in [n - 1]$ is a partial sum of $\lambda$. In particular, compositions $\lambda \vDash n$ are in bijection with subsets of $[n - 1]$, and the map is given precisely by taking the set of partial sums other than $0$ and $n$.

We now define an infinite analogue of a seaweed meander. 

\begin{definition}
    Take two infinite subsets $A, B \subseteq \NN$, where $A = \{a_1 < a_2 < \dots\}$ and $B = \{b_1 < b_2 < \dots\}$. Then the corresponding (infinite) seaweed meander $\frac{\lambda}{\mu}$ is defined analogously to the finite case, where $\lambda_i = a_i - a_{i - 1}$ and $\mu_i = b_i - b_{i - 1}$. We take the convention $a_0 = b_0 = 0$, so that the $a_i$ and $b_i$ are the sequences of partial sums for $\lambda$ and $\mu$, respectively.
\end{definition}

Now, consider the distribution $\nu$ on infinite seaweed meanders given by taking $A$, $B$ to be independently and identically distributed (iid) random subsets of $\NN$, where each $n \in \NN$ is included independently with probability $\half$.
Then the corresponding sequences $\lambda$, $\mu$ consist of independent $\Geom\left(\half\right)$ random variables, where we take the geometric distribution to have support $\NN$.

\begin{definition}
    Let $\rho$ be the distribution of the first indecomposable component of $\frac{\lambda}{\mu}$, where $\lambda$, $\mu$ consist of independent $\Geom\left(\half\right)$ random variables as above.
\end{definition}

\begin{prop}\label{prop:geom-size}
    If $I \sim \rho$, then $\abs{I} \sim \Geom\left(\frac{1}{4}\right)$.
\end{prop}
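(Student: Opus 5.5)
The size of the first indecomposable component is the smallest common positive partial sum of $\lambda$ and $\mu$. That is, $\abs{I} = \min(A \cap B)$, where $A, B \subseteq \NN$ are the random sets of partial sums under $\nu$. This holds because $\frac{\lambda}{\mu}$ splits as a direct sum exactly at the common partial sums: the first component ends at the first $k \ge 1$ with $k \in A \cap B$. Restricted to $[k]$, the truncated compositions then have no common partial sums other than $0$ and $k$, which is precisely the indecomposability criterion stated in the Notation subsection.

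So the plan is to compute the law of $\min(A \cap B)$. Each $n \in \NN$ lies in $A$ independently with probability $\half$, and likewise for $B$, with $A$ and $B$ independent. Hence the events $\{n \in A \cap B\}$ for $n \in \NN$ are mutually independent, each with probability $\frac14$. It follows that
\[\PP[\abs{I} = k] = \PP[1, \dots, k-1 \notin A \cap B,\ k \in A \cap B] = \left(\tfrac34\right)^{k-1}\tfrac14,\]
which is exactly $\Geom\left(\frac14\right)$ with support $\NN$. The same independence shows that $A \cap B$ is almost surely infinite, so the first component is almost surely finite and $\rho$ is well defined.

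The only step needing care is the identification of the first indecomposable component with the prefix ending at $\min(A\cap B)$ for infinite meanders. I would argue it directly. The arcs of each part of $\lambda$ stay within that part's block, and similarly for $\mu$. So if $k \in A \cap B$, no arc crosses between $[k]$ and $\{k+1, \dots\}$, and the meander decomposes at $k$. Conversely, no smaller split point exists, since any split point must be a common partial sum. Beyond this, the computation is routine.
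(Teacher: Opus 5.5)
Your proposal is correct and follows the paper's proof: the paper also identifies $\abs{I}$ with $\min(A \cap B)$ and uses that the events $\{n \in A \cap B\}$ are independent, each with probability $\frac{1}{4}$, to conclude $\abs{I} \sim \Geom\left(\frac{1}{4}\right)$. Your extra justification of why the first component ends at $\min(A \cap B)$, and why $A \cap B$ is almost surely infinite, fills in details that the paper leaves implicit.
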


\begin{proof}
    Note that $\abs{I} = \min \{n \mid n \in A \text{ and } n \in B\}$, and the event $\{n \in A \text{ and } n \in B\}$ occurs with probability $\frac{1}{4}$, independently for each $n$. Thus, $\abs{I} \sim \Geom\left(\frac{1}{4}\right)$.
\end{proof}

We recover the distribution $\nu$ by taking the direct sum of seaweed meanders which are independent samples from $\rho$.

\begin{definition}
    Let $\{M_i\}_{i = 1} ^ \infty$ be an iid sequence of indecomposable seaweed meanders with law $\rho$, and let $\abs{M_i} = m_i$. Define
    \[Y_n = \ind\left(\bigoplus_{i = 1} ^ {T_n} M_i\right),\] where \[T_n = \max \left\{t \in \ZZ_{\ge 0} : \sum_{i = 1} ^ t m_i < n\right\},\] and we take the empty sum to be $0$.
\end{definition}

In other words, the seaweed meander $\bigoplus_{i = 1} ^ {T_n} M_i$ is given by taking a direct sum of a sequence of indecomposable components sampled from $\rho$, and truncating it at the greatest partial sum which is less than $n$. We then obtain $Y_n$ as the index of this random seaweed meander.

\begin{prop}\label{prop:truncate}
    For any $n$, $Y_n$ has the same distribution as the index of a uniformly random seaweed meander in $\calM_n$ with the last indecomposable component removed.
\end{prop}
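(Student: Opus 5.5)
We couple both objects to the infinite random seaweed meander with law $\nu$. Let $A,B\subseteq\NN$ be independent random sets, each containing every element independently with probability $\half$. Restrict them to $[n-1]$, i.e.\ set $A_n = A\cap[n-1]$ and $B_n = B\cap[n-1]$. Then $A_n, B_n$ are independent uniform subsets of $[n-1]$. By the bijection between compositions of $n$ and subsets of $[n-1]$, they determine a uniformly random pair $(\lambda^{(n)},\mu^{(n)})$ of compositions of $n$, hence a uniformly random $M\in\calM_n$. The plan is to show that, on this common probability space, the finite meander $M$ with its last indecomposable component removed coincides exactly with $\bigoplus_{i=1}^{T_n} M_i$. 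Here $M_1, M_2, \dots$ are the successive indecomposable components of the infinite meander determined by $(A,B)$.

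First we check that the $M_i$ so defined are iid with law $\rho$. Let $C = A\cap B = \{c_1<c_2<\dots\}$, so the component boundaries of the infinite meander are exactly the elements of $C$, with $c_0=0$. The first component is determined by $A\cap[1,c_1]$ and $B\cap[1,c_1]$, so $M_1\sim\rho$ by definition. The key point is that $c_1$ is a stopping time for the filtration generated by the pairs $(\mathbf 1_{k\in A},\mathbf 1_{k\in B})_{k\ge1}$, which are iid across $k$. By the strong Markov property, i.e.\ the standard fact that an iid sequence shifted at a stopping time is again iid and independent of the past, the shifted sets $(A-c_1)\cap\NN$ and $(B-c_1)\cap\NN$ again have law $\nu$ and are independent of $M_1$. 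Iterating gives iid $M_i\sim\rho$ with $m_i = c_i - c_{i-1}$. This is consistent with \cref{prop:geom-size}.

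Next we identify the truncation. The common partial sums of $\lambda^{(n)}$ and $\mu^{(n)}$ strictly between $0$ and $n$ are exactly $C\cap[n-1]$, and $n$ itself is always a common partial sum. So the indecomposable components of $M$ are the blocks between consecutive elements of $\{0\}\cup(C\cap[n-1])\cup\{n\}$. Removing the last component leaves the meander on $[1,c_{T}]$, where $c_T=\max(\{0\}\cup (C\cap[n-1]))$, and this $T$ equals $T_n=\max\{t:\sum_{i\le t}m_i<n\}$. On $[1,c_{T_n}]$, the arcs of $M$ are determined by $A_n\cap[1,c_{T_n}]$ and $B_n\cap[1,c_{T_n}]$, which agree with $A$ and $B$ there. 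Hence this part of $M$ is literally $\bigoplus_{i=1}^{T_n}M_i$, and taking indices gives equality of $Y_n$ and the index of the truncated $M$ pointwise, so in particular in distribution.

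The only delicate step is the iid claim, specifically the independence of the later components from earlier ones via the stopping-time argument. Everything else is bookkeeping. One boundary case should also be checked: when $C\cap[n-1]=\emptyset$, we have $T_n=0$, $M$ is indecomposable, and both sides are the empty meander of index $0$.
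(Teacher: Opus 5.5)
Your proposal is correct and follows the same route as the paper, whose proof is this coupling: sample each $i\in[n-1]$ independently with probability $\frac{1}{2}$ as a partial sum of the top and of the bottom composition, then truncate at the greatest common partial sum below $n$. Your stopping-time argument for the iid structure of the components $M_i$ makes explicit what the paper takes for granted when it says $\nu$ is recovered as a direct sum of independent samples from $\rho$.
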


\begin{proof}
    We may obtain both distributions by the following procedure. First, randomly sample a seaweed meander by independently choosing with probability $\half$ whether each $i \in [n - 1]$ is a partial sum of the top and bottom compositions, then truncating at the greatest common partial sum. Then the distribution is recovered by taking the index of the resulting seaweed meander.
\end{proof}

We now recall the definition of a renewal-reward process, following the exposition by Wolff~\cite{wolff}.

\begin{definition}
    A renewal process is a point process with iid waiting times $C_i$. A renewal-reward process additionally associates a \textit{reward} $R_i$ to the $i$th renewal interval $C_i$, so that the pairs $(C_i, R_i)$ are iid. (Note that $R_i$ need not be a function of $C_i$.)
    Define
    \[R(t) = \sum_{i = 1} ^ {N(t)} R_i,\]
    where
    \[N(t) = \max \left\{n \in \ZZ_{\ge 0} \colon \sum_{i = 1} ^ n C_i \le t\right\}.\]
\end{definition}

In other words, $R(t)$ is the total reward obtained up to time $t$.
In this setting, we have the following central limit theorem, which is a special case of a more general result stated in~\cite[Theorem 23]{wolff}.
\begin{theorem}\label{renewal-reward-clt}
    Consider a renewal-reward process with iid pairs $(C_i, R_i)$ of waiting times and rewards, respectively.
    Define \[a = \frac{\EE[R_1]}{\EE[C_1]}, \quad b = \sqrt{\frac{\EE[(R_1 - aC_1) ^ 2]}{\EE[C_1]}}.\] Suppose that $\EE[R_1 ^ 2] < \infty$, $\EE[C_1 ^ 2] < \infty$, and $\PP[R_1 = a C_1] < 1$. 
    Then as $t \to \infty$, 
    \[\frac{R(t) - at}{b\sqrt{t}} \convd \calN(0, 1).\]
\end{theorem}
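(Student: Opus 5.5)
This is the renewal-reward central limit theorem, which the paper quotes as a special case of~\cite[Theorem 23]{wolff}. To prove it directly, I would reduce it to the classical central limit theorem for i.i.d.\ sums, using a random-index (Anscombe-type) argument.

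\textbf{Setup.} Write $\mu_C = \EE[C_1]$ and $S_n = \sum_{i \le n} C_i$. Define the centred variables $Z_i = R_i - aC_i$; these are i.i.d.\ with $\EE[Z_i] = \EE[R_1] - a\mu_C = 0$. Since $R_1, C_1 \in L^2$, we have $\sigma^2 := \EE[Z_1^2] < \infty$. The hypothesis $\PP[R_1 = aC_1] < 1$ gives $\sigma^2 > 0$. With this notation $b^2 = \sigma^2/\mu_C$. The starting point is the exact decomposition
\[ R(t) - at = \sum_{i=1}^{N(t)} Z_i \;-\; a\bigl(t - S_{N(t)}\bigr). \]

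\textbf{Step 1: the boundary term is negligible.} By definition of $N(t)$ we have $0 \le t - S_{N(t)} < C_{N(t)+1}$. By the strong law of large numbers applied to $S_n$, we get $N(t)/t \to 1/\mu_C$ almost surely. Consequently, with probability tending to $1$ we have $N(t)+1 \le 2t/\mu_C$, and on this event $C_{N(t)+1} \le \max_{i \le 2t/\mu_C} C_i$. Since $\EE[C_1^2] < \infty$, a union bound gives
\[ \PP\Bigl[\max_{i \le n} C_i > \varepsilon\sqrt n\Bigr] \le n\,\PP\bigl[C_1^2 > \varepsilon^2 n\bigr] \to 0, \]
because $n\PP[C_1^2 > \varepsilon^2 n] \le \varepsilon^{-2}\EE\bigl[C_1^2 \mathbf 1\{C_1^2 > \varepsilon^2 n\}\bigr] \to 0$. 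Hence $a(t - S_{N(t)}) = o_p(\sqrt t)$.

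\textbf{Step 2: the random-index sum (the main obstacle).} Put $n_t = \lfloor t/\mu_C \rfloor$. The classical central limit theorem gives
\[ \frac{1}{\sigma\sqrt{n_t}} \sum_{i \le n_t} Z_i \convd \calN(0,1). \]
The key step is to show that $\sum_{i \le N(t)} Z_i - \sum_{i \le n_t} Z_i = o_p(\sqrt t)$. This is Anscombe's theorem, and I would prove it directly as follows.
\begin{itemize}
\item Fix $\varepsilon, \delta > 0$. Since $N(t)/t \to 1/\mu_C$, with probability tending to $1$ we have $\abs{N(t) - n_t} \le \delta n_t$.
\item On that event, the difference is bounded by $\max_{\abs{k - n_t} \le \delta n_t} \bigl|\sum_{i=n_t+1}^{k} Z_i\bigr|$ (reading the sum with reversed orientation when $k < n_t$).
\item Kolmogorov's maximal inequality, applied on each side of $n_t$, gives
\[ \PP\Bigl[\max_{\abs{k-n_t} \le \delta n_t} \Bigl|\sum_{i=n_t+1}^{k} Z_i\Bigr| > \varepsilon\sqrt{n_t}\Bigr] \le \frac{2\delta n_t \sigma^2}{\varepsilon^2 n_t} = \frac{2\delta\sigma^2}{\varepsilon^2}. \]
\item Letting $\delta \to 0$ yields the claim.
\end{itemize}

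\textbf{Step 3: conclusion.} Combining Steps 1 and 2 with Slutsky's lemma gives
\[ \frac{R(t) - at}{\sigma\sqrt{n_t}} \convd \calN(0,1). \]
Finally, $\sigma\sqrt{n_t}/(b\sqrt t) = \sigma\sqrt{n_t}\big/\sqrt{\sigma^2 t/\mu_C} \to 1$, so the same limit holds with normalisation $b\sqrt t$, as stated.

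The delicate point is Step 2, since $N(t)$ is random and not independent of the $Z_i$. The maximal-inequality argument sidesteps this dependence because it controls all indices in a window around $n_t$ simultaneously.
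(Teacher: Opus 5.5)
Your proof is correct and complete. It also does something the paper does not: the paper gives no argument for this statement and imports it as a special case of~\cite[Theorem 23]{wolff}, so you are supplying a self-contained derivation rather than reproducing one from the text. Your route is the standard one. You centre to $Z_i = R_i - aC_i$, which gives the exact identity $R(t) - at = \sum_{i \le N(t)} Z_i - a(t - S_{N(t)})$. You then kill the overshoot term using $0 \le t - S_{N(t)} < C_{N(t)+1}$, the renewal strong law, and the union bound with dominated convergence on $C_1^2$. Finally you swap the random index $N(t)$ for the deterministic $n_t = \lfloor t/\EE[C_1] \rfloor$ via Kolmogorov's maximal inequality, which is Anscombe's theorem. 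Each step checks out. This includes the lower side of the window, where the backward sums $\sum_{i=k+1}^{n_t} Z_i$ are partial sums of the reversed i.i.d.\ sequence, and the final renormalisation $\sigma\sqrt{n_t}/(b\sqrt t) \to 1$. The citation buys brevity and a more general statement, since the paper notes it uses only a special case. Your proof buys transparency about where each hypothesis enters. Square-integrability of $R_1$ and $C_1$ gives $\sigma^2 < \infty$ for both the classical CLT and the maximal inequality. Square-integrability of $C_1$ alone makes the boundary term $o_p(\sqrt t)$. The condition $\PP[R_1 = aC_1] < 1$ is exactly $\sigma^2 > 0$. You also implicitly use $C_i \ge 0$ and $0 < \EE[C_1]$. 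These are standing assumptions for a renewal process, and they are already needed for $a$ to be defined, so this is not a gap.
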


\begin{remark*}
    Note that the third condition simply ensures that $b \ne 0$, so that the normalization is well-defined.
\end{remark*}

Now, we prove \cref{thm:clt-seaweed}, our central limit theorem, using the language of renewal-reward processes.

\begin{proof}[Proof of~\cref{thm:clt-seaweed}]
    Let $M_n$ be a uniformly random seaweed meander of size $n$, and let $\ind'(M_n)$ be the index of the resulting seaweed meander when the last indecomposable component of $M_n$ is removed.
    Note that we then have $X_n = \ind(M_n)$.
    By~\cref{prop:truncate}, $\ind'(M_n)$ has the same distribution as $Y_n$.

    We first show
    \[\frac{\ind(M_n) - \ind'(M_n)}{\sqrt{n}} \convp 0.\]
    Note that $\ind(M_n) - \ind'(M_n)$ is exactly the index of the last indecomposable component $I$ of $M_n$.
    By the same argument as in~\cref{prop:geom-size}, we have \[\abs{I} \sim \min\left(\Geom\left(\frac{1}{4}\right), n\right).\]
    Hence, for any $\eps > 0$, we have
    \[\PP[\abs{I} > \eps \sqrt{n}] \le \left(\frac{3}{4}\right) ^ {\floor{\eps \sqrt{n}}} \to 0.\]
    For any indecomposable component $I$, we have $\ind(I) \le \abs{I}$.
    Thus,
    \[\PP\left[\frac{\ind(M_n) - \ind'(M_n)}{\sqrt{n}} > \eps\right] = \PP[\ind(I) > \eps \sqrt{n}] \le \PP[\abs{I} > \eps \sqrt{n}] \to 0,\] so
    \[\frac{\ind(M_n) - \ind'(M_n)}{\sqrt{n}} \convp 0.\]
    Now, note that $Y_n$ is exactly the reward $R(n - 1)$ for a reward-renewal process which takes $I_i \sim \rho$, and has waiting times $C_i = \abs{I_i}$ and rewards $R_i = \ind(I_i)$.

    We have $0 < R_i \le C_i$, and by~\cref{prop:geom-size}, $C_i \sim \Geom\left(\frac{1}{4}\right)$. Thus, $\EE[C_i ^ 2] < \infty$ and $\EE[R_i ^ 2] < \infty$.
    Finally, we check that $\frac{R_i}{C_i}$ is non-constant. This clearly holds, as the seaweed meanders $\frac{2}{2}$ and $\frac{2}{1 | 1}$ are both of size $2$ but have index $2$ and $1$, respectively.

    Thus, by the central limit theorem for renewal-reward processes stated above in~\cref{renewal-reward-clt}, we have
    \[\frac{Y_n - a(n - 1)}{b\sqrt{n - 1}} = \frac{R(n - 1) - a(n - 1)}{b\sqrt{n - 1}} \convd \calN(0, 1),\] where
    \[a = \frac{\EE[R_1]}{\EE[C_1]}, \quad b = \sqrt{\frac{\EE[(R_1 - aC_1) ^ 2]}{\EE[C_1]}}.\]
    By~\cref{prop:truncate},
    \[\frac{\ind'(M_n) - a(n - 1)}{b\sqrt{n - 1}} \convd \calN(0, 1).\]
    By Slutsky's theorem, we may replace $b\sqrt{n - 1}$ in the denominator with $b\sqrt{n}$, and
    \[\frac{\ind(M_n) - an}{b\sqrt{n}} - \frac{\ind'(M_n) - a(n - 1)}{b\sqrt{n}} = \frac{\ind(M_n) - \ind'(M_n)}{b\sqrt{n}} - \frac{a}{b\sqrt{n}} \convp 0,\] so
    \[\frac{X_n - an}{b\sqrt{n}} = \frac{\ind(M_n) - an}{b\sqrt{n}} \convd \calN(0, 1),\] as desired.
\end{proof}

\begin{remark*}
    Numerical computations suggest $a \approx 0.42$ and $b \approx 0.49$, but it seems difficult to compute closed-form expressions for $a$ and $b$. These estimates were obtained by enumerating all indecomposable components of bounded size, then applying the formulas given by the renewal-reward central limit theorem.
\end{remark*}

The same argument also gives a central limit theorem for various other statistics which are additive over indecomposable components.
\begin{cor}\label{cor:more-clt}
    Each of the following statistics on a uniformly random seaweed meander of size $n$ also satisfies a central limit theorem as $n \to \infty$:
    \begin{itemize}
        \item number of cycles,
        \item number of paths,
        \item number of occurrences of a fixed connected component (up to translation),
        \item number of indecomposable components of a fixed size.
    \end{itemize}
\end{cor}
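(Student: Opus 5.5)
We will rerun the proof of \cref{thm:clt-seaweed} with the index replaced by a general statistic $f$. We need $f$ to be additive over direct sums, nonnegative, and bounded by a constant multiple of the size on indecomposable components. All four statistics are additive, since a connected component never straddles two indecomposable components: the direct sum is a disjoint union of graphs. For the number of cycles, paths, or occurrences of a fixed component $K$, we have $0 \le f(I) \le \abs{I}$, since each counted component uses at least one vertex. For the number of indecomposable components of a fixed size $s$, the value on an indecomposable $I$ is $\mathbf{1}[\abs{I}=s] \le 1$.

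Fix such an $f$ and let $f'(M_n)$ be $f$ of $M_n$ with its last indecomposable component removed. As in \cref{prop:truncate}, $f'(M_n)$ has the same law as $R(n-1)$ for the renewal-reward process with $C_i = \abs{I_i}$ and $R_i = f(I_i)$, where $I_i \sim \rho$ are iid. The truncation error $f(M_n)-f'(M_n) = f(I)$, with $I$ the last component, satisfies $0 \le f(I)\le \abs{I}$. Here $\abs{I}$ is stochastically dominated by $\Geom(\frac14)$, so the error divided by $\sqrt n$ tends to $0$ in probability, exactly as before. The moment conditions $\EE[C_1^2],\EE[R_1^2]<\infty$ follow from $R_1\le C_1$ and \cref{prop:geom-size}. \Cref{renewal-reward-clt} together with Slutsky's theorem then gives the result, provided $\PP[R_1 = aC_1]<1$.

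The main obstacle is checking this nondegeneracy condition for each statistic. Since $C_1$ and $R_1$ take integer values with positive probability on every indecomposable meander, it suffices to exhibit two indecomposable meanders $I, I'$ with $f(I)/\abs{I}\ne f(I')/\abs{I'}$.
\begin{itemize}
\item For cycles, $\frac{2}{2}$ has one cycle and $\frac{1}{1}$ has none.
\item For paths, $\frac{2}{2}$ has no paths and $\frac{1}{1}$ has one.
\item For components of size $s$, take $\frac{s}{s}$, which has ratio $1/s$, and $\frac{1}{1}$ (or $\frac{2}{2}$ if $s=1$), which has ratio $0$.
\item For a fixed component $K$ (up to translation), first realize $K$ inside some indecomposable meander $I_K$ with $f(I_K)\ge 1$. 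This holds because every component of a seaweed meander lies within a single indecomposable component, and translating that component to the start gives an $I_K \sim \rho$ of positive probability. Then compare $I_K$ with a small indecomposable meander not containing $K$, such as $\frac{1}{1}$ or $\frac{2}{2}$, which has ratio $0$. At least one of these differs from $K$: if $K$ is an isolated vertex, use $\frac{2}{2}$; otherwise use $\frac{1}{1}$.
\end{itemize}
This gives $b>0$ in every case and completes the argument.
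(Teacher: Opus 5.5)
Your proposal is correct and follows the paper's own argument: rerun the proof of \cref{thm:clt-seaweed} with the statistic as the reward, then check that $R_i/C_i$ is non-constant. You also supply the explicit witnesses (e.g.\ $\frac{2}{2}$ versus $\frac{1}{1}$, and $\frac{s}{s}$) that the paper calls ``straightforward''; the one implicit hypothesis, which the paper shares, is that the fixed component $K$ actually occurs in some seaweed meander, since otherwise its count is identically zero.
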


\begin{proof}
    The same argument as in the proof of~\cref{thm:clt-seaweed} works, where we now take the reward to be the desired statistic. It is straightforward to check that the ratio $\frac{R_i}{C_i}$ is non-constant in each case.
\end{proof}

\subsection{Equidistribution}

We now show that the index of a uniformly random seaweed meander is approximately equidistributed for any modulus. The proof proceeds in two steps. We first show in~\cref{lem:size2-comp} that with high probability, a uniformly random seaweed meander of size $n$ has $\Omega(n)$ indecomposable components of size $2$. Then we vary the component among each of the three indecomposable components of size $2$, two of which have index $1$ and one of which has index $2$, at each of these $\Omega(n)$ positions to derive the equidistribution result.

\begin{definition}
    For a seaweed meander $M$, let $f_2(M)$ be the number of indecomposable components of $M$ of size $2$.
\end{definition}

We say two seaweed meanders of size $n$ are equivalent if they have indecomposable components of the same sizes, in the same order, so that all components of size not equal to $2$ agree.

\begin{lemma}
    For a seaweed meander $M$, the number of seaweed meanders in its equivalence class is exactly $3 ^ {f_2(M)}$.
\end{lemma}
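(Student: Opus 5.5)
The plan is to use the unique decomposition of a seaweed meander into indecomposable components, so that the equivalence class of $M$ becomes a product of independent choices, one per component. First I will record that every $M \in \calM_n$ factors uniquely as $M = I_1 \oplus \dots \oplus I_k$ with each $I_j$ indecomposable. The cut points are exactly the common partial sums of $\lambda$ and $\mu$, listed as $0 = s_0 < s_1 < \dots < s_k = n$, and $I_j$ is the restriction of $\frac{\lambda}{\mu}$ to the block $(s_{j-1}, s_j]$.

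Conversely, suppose we have any sequence of indecomposable seaweed meanders $I_1', \dots, I_k'$ with $\abs{I_j'} = s_j - s_{j-1}$. I will check that their direct sum has common partial sums exactly $\{s_0, \dots, s_k\}$. The partial sums of the concatenated top composition are the translates $s_{j-1} + (\text{partial sums of the top of } I_j')$, and similarly for the bottom. A common partial sum strictly inside $(s_{j-1}, s_j)$ would therefore be a nontrivial common partial sum of $I_j'$, contradicting indecomposability. Hence the map
\[(I_1', \dots, I_k') \mapsto I_1' \oplus \dots \oplus I_k'\]
is a bijection from such sequences onto the equivalence class of $M$: the class is determined exactly by the sequence of sizes $(\abs{I_1}, \dots, \abs{I_k})$, and the uniqueness of the decomposition gives injectivity.

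It remains to count. The definition of equivalence says that every component of size other than $2$ must agree with the corresponding component of $M$, so each such position contributes exactly one choice. Each position of size $2$ may be filled by any indecomposable seaweed meander of size $2$. For this count I will enumerate all pairs of compositions of $2$: there are four pairs, $\frac{\lambda}{\mu}$ with $\lambda, \mu \in \{(2), (1,1)\}$, and only $\frac{1|1}{1|1}$ is decomposable, since it has the common partial sum $1$. That leaves exactly $3$ indecomposable ones, namely $\frac{2}{2}$, $\frac{2}{1|1}$, and $\frac{1|1}{2}$. Multiplying gives $3^{f_2(M)}$.

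The main point needing care is the claim that concatenating indecomposables creates no new common partial sums, together with the reading of ``equivalent'' as fixing all components of size $\neq 2$, which the paper's definition makes explicit. Neither is a serious obstacle.
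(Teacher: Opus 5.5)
Your proposal is correct and follows essentially the same approach as the paper: the equivalence class is obtained by independently choosing one of the three indecomposable size-$2$ components $\frac{2}{2}$, $\frac{2}{1 | 1}$, $\frac{1 | 1}{2}$ at each of the $f_2(M)$ size-$2$ positions. Your extra verification, that concatenating indecomposables creates no new common partial sums and so makes the map a bijection, is a detail the paper leaves implicit.
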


\begin{proof}
    There are exactly three indecomposable components of size $2$, namely $\frac{2}{2}$, $\frac{2}{1 | 1}$, and $\frac{1 | 1}{2}$. At each of the $f_2(M)$ components of size $2$, we may choose any of these three components, for a total of $3 ^ {f_2(M)}$ seaweed meanders equivalent to $M$.
\end{proof}

\begin{ex}
    The seaweed meanders
    \[\frac{2 | 3}{1 | 1 | 2 | 1}, \frac{1 | 1 | 3}{2 | 2 | 1}, \frac{2 | 3}{2 | 2 | 1}\] form a complete equivalence class in $\calM_5$.
\end{ex}

Let the equivalence classes in $\calM_n$ be $\calE = \{E_1, \dots, E_\ell\}$, and define $f_2(E_i) = f_2(M)$ for each $M \in E_i$. This is well-defined, since all seaweed meanders in $E_i$ have the same number of components of size $2$.

\begin{definition}
    Let $M_{E_i}$ be the seaweed meander obtained when each indecomposable component of size $2$ of a meander in $E_i$ is replaced by the component $\frac{2}{2}$.
\end{definition}
This is well-defined because all seaweed meanders in $E_i$ agree on components which do not have size $2$, and hence produce the same seaweed meander under this operation.

We first show that with high probability, a uniformly random seaweed meander of size $n$ has $\Omega(n)$ indecomposable components of size $2$.

\begin{lemma}\label{lem:size2-comp}
    There exist constants $\eps, \delta > 0$ such that for all sufficiently large $n$,
    \[\PP[f_2(M_n) < \eps n] \le e ^ {-\delta n},\] where $M_n$ is a uniformly random seaweed meander of size $n$.
\end{lemma}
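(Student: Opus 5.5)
Our plan is to encode a uniformly random $M_n$ by the $n-1$ pairs of independent fair bits $(\mathbf 1[i \in A], \mathbf 1[i \in B])$ for $i \in [n-1]$. Here $A$ and $B$ are the sets of partial sums of $\lambda$ and $\mu$, as in the discussion preceding~\cref{prop:truncate}. Indecomposable components are delimited by the common partial sums, i.e.\ by the $i$ with $i \in A \cap B$, together with $0$ and $n$.

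The key observation is local. Suppose that for some $1 \le i \le n-3$ we have $i \in A \cap B$, $i+1 \notin A \cap B$, and $i+2 \in A \cap B$ (with the convention $0 \in A \cap B$, so that $i = 0$ may also be allowed). Then the segment between $i$ and $i+2$ is an indecomposable component of size exactly $2$. The pattern at positions $i,i+1,i+2$ occurs with probability $\frac14 \cdot \frac34 \cdot \frac14 = \frac{3}{64}$. Occurrences at starting positions $i$ that are multiples of $3$, say $i = 3, 6, 9, \dots$, involve disjoint triples of positions. Hence they are independent events, and distinct such occurrences give distinct size-$2$ components.

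It follows that $f_2(M_n) \ge Z$, where $Z \sim \mathrm{Bin}(N, \frac{3}{64})$ and $N = \lfloor (n-3)/3 \rfloor \ge n/4$ for large $n$. A Chernoff (or Hoeffding) bound gives
\[\PP\left[Z < \tfrac12 \cdot \tfrac{3}{64} N\right] \le e^{-cN}\]
for some $c > 0$. Taking $\eps = \frac{3}{512}$, we have $\eps n \le \frac{3}{128} N$ for large $n$, so the lemma follows with $\delta = c/4$.

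There is no real obstacle. The only care needed is to check that the chosen triples really are disjoint and lie within $[n-1]$, so that their bits are independent and uniform, and that each occurrence of the pattern genuinely produces a component of size exactly $2$. The latter holds because $i+1$ is not a common partial sum while $i$ and $i+2$ are.
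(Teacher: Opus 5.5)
Your proposal is correct and follows essentially the same route as the paper's proof. The paper also lower-bounds $f_2$ by a sum of local indicators for the pattern ``common partial sum, non-common, common partial sum'' at three consecutive positions, each with probability $\frac{3}{64}$, keeps every third indicator so they depend on disjoint triples and are independent, and finishes with a Chernoff bound; you additionally make $\varepsilon$ and $\delta$ explicit.
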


\begin{proof}
    Let $\indic_i$ be the indicator variable for an indecomposable component of size $2$ consisting of the vertices $i$ and $i + 1$. Then
    \[f_2 = \indic_1 + \dots + \indic_{n - 1}.\]
    
    As before, we may choose a uniformly random seaweed meander by randomly selecting the sequences of partial sums for $\lambda$ and $\mu$. Then the indicator $\indic_i$ depends only on whether $i - 1$, $i$, and $i + 1$ are partial sums of $\lambda$ and $\mu$.

    In particular, $\indic_i = 1$ if and only if $i - 1$ and $i + 1$ are partial sums of both $\lambda$ and $\mu$, while $i$ is a partial sum for at most one of $\lambda$ and $\mu$. Hence, for $2 \le i \le n - 2$, each indicator $\indic_i$ is Bernoulli with parameter $\frac{1}{4} \cdot \frac{3}{4} \cdot \frac{1}{4} = \frac{3}{64}$.
    Furthermore, the above argument shows that the random variables $\{\indic_{3i}\}$ are independent. Thus,
    \[f_2(M_n) \ge \indic_3(M_n) + \dots + \indic_{3\floor{\frac{n - 2}{3}}}(M_n).\]
    In particular, $f_2(M_n)$ is bounded below by the sum of $\Omega(n)$ iid Bernoulli random variables, so applying the Chernoff bound gives the desired tail bound.
\end{proof}

Using this lemma, we now prove~\cref{thm:equidistrib}.

\begin{proof}[Proof of~\cref{thm:equidistrib}]
    Fix an equivalence class $E_i$, and let $M$ be a uniformly random seaweed meander chosen from $E_i$.

    For any indecomposable component of size $2$, there is a $\frac{2}{3}$ probability it appears as $\frac{2}{1 | 1}$ or $\frac{1 | 1}{2}$ in $M$, and a $\frac{1}{3}$ probability it appears as $\frac{2}{2}$. In the former case, the component contributes index $1$, while in the latter case, it contributes index $2$.

    Hence, we may sample the index of a uniformly random seaweed meander in $E_i$ in the following way. Start with $\ind(M_{E_i})$, and take $f_2(E_i)$ steps, where the index remains unchanged with probability $\frac{1}{3}$ and decreases by $1$ with probability $\frac{2}{3}$.

    Reducing mod $m$, we have an irreducible and aperiodic Markov chain on the $m$ residue classes, whose unique stationary distribution is the uniform distribution. It is well-known that irreducible, aperiodic Markov chains converge exponentially quickly to their stationary distributions, that is, if $X_s$ is the index after $s$ steps, we have
    \[\abs{\PP[X_s \equiv r \bmod m] - \frac{1}{m}} = e ^ {-\Omega(s)}.\]
    In particular, if $f_2(E_i) \ge \eps n$, we have
    \[\abs{\PP[\ind(M) \equiv r \bmod m] - \frac{1}{m}} = e ^ {-\Omega(n)}.\]
    By~\cref{lem:size2-comp}, we have the bound
    \[\PP[f_2(M_n) < \eps n] = e ^ {-\Omega(n)}.\]
    Thus, we have
    \begin{align*}
        \abs{\PP[\ind(M_n) \equiv r \bmod m] - \frac{1}{m}} &= \abs{\sum_{i = 1} ^ \ell \left(\PP[\ind(M_n) \equiv r \bmod m \mid M_n \in E_i] - \frac{1}{m}\right) \cdot \PP[M_n \in E_i]} \\
        &\le \sum_{i = 1} ^ \ell \abs{\left(\PP[\ind(M_n) \equiv r \bmod m \mid M_n \in E_i] - \frac{1}{m}\right) \cdot \PP[M_n \in E_i]} \\
        &= \sum_{i \colon f_2(E_i) < \eps n} \abs{\left(\PP[\ind(M_n) \equiv r \bmod m \mid M_n \in E_i] - \frac{1}{m}\right) \cdot \PP[M_n \in E_i]} \\
        &\phantom{=} + \sum_{i \colon f_2(E_i) \ge \eps n} \abs{\left(\PP[\ind(M_n) \equiv r \bmod m \mid M_n \in E_i] - \frac{1}{m}\right) \cdot \PP[M_n \in E_i]} \\
        &\le \sum_{i \colon f_2(E_i) < \eps n} \PP[M_n \in E_i] + \sum_{i \colon f_2(E_i) \ge \eps n} e ^ {-\Omega(n)}\PP[M_n \in E_i] \\
        &\le \PP[f_2(M_n) < \eps n] + e ^ {-\Omega(n)}\PP[f_2(M_n) \ge \eps n] \\
        &= O(c ^ n)
    \end{align*} for some constant $c < 1$.
\end{proof}
\section{Generating Functions}\label{sec:genfunc}

We consider seaweed meanders associated to compositions with restricted parts, as studied by Archer, Geary, and Laudone~\cite{archer2025}. 

\subsection{Transfer-Matrix Method} We first recall some definitions and notation from~\cite{stanley}.

\begin{definition}[{\cite[Section 4.7]{stanley}}]\label{def:automaton}
    Let $R$ be a commutative ring, and let $G$ be a finite directed graph (not necessarily simple) with a weight function $w \colon E(G) \to R$. For a walk $\Gamma = e_1\dots e_n$ on $G$, define $\abs{\Gamma} = n$ and $w(\Gamma) = w(e_1) \dots w(e_n)$.
    For $i, j \in V(G)$, define
    \[a_{ij}(n) = \sum_\Gamma w(\Gamma),\] where the sum is over all walks $\Gamma$ from $i$ to $j$ of length $n$, and
    \[A_{ij}(x) = \sum_{n \ge 0} a_{ij}(n) x ^ n \in R[[x]].\]
\end{definition}

We will use the following result.

\begin{theorem}[{\cite[Theorem 4.7.2]{stanley}}]\label{thm:stanley-gen-func}
    Let $R$ be a commutative ring, and $G$ be a finite directed graph with edge weights in $R$. Let $A = \{a_{ij}(1)\}$ be the (weighted) adjacency matrix of $G$. Then
    \[A_{ij}(x) = \frac{C_{ji}(I - xA)}{\det(I - xA)},\] where $C_{ji}(M)$ is the cofactor of $M$ obtained by removing the $j$th row and $i$th column of $M$.
\end{theorem}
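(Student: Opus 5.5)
The plan is to recognize $a_{ij}(n)$ as an entry of a matrix power, sum the resulting geometric series formally in the matrix ring over $R[[x]]$, and then read off the entries of the inverse using the adjugate. Nothing beyond linear algebra over a commutative ring is needed.

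First I will show by induction on $n$ that $a_{ij}(n) = (A^n)_{ij}$. For $n = 0$ the only walk of length $0$ from $i$ to $j$ is the empty walk, which exists exactly when $i = j$ and has weight $1$. So $a_{ij}(0) = \delta_{ij} = (A^0)_{ij}$. For the inductive step, split a walk of length $n+1$ from $i$ to $j$ as a walk of length $n$ from $i$ to some vertex $k$, followed by a single edge from $k$ to $j$. Weights are multiplicative, and the parallel edges from $k$ to $j$ contribute exactly $a_{kj}(1)$. Hence
\[a_{ij}(n+1) = \sum_k a_{ik}(n)\, a_{kj}(1) = (A^n A)_{ij}.\]
Consequently the matrix $\bigl(A_{ij}(x)\bigr)_{i,j}$ equals $\sum_{n \ge 0} x^n A^n$, viewed as a matrix with entries in $R[[x]]$.

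Next I will verify that $\sum_{n\ge 0} x^n A^n = (I - xA)^{-1}$ in the matrix ring over $R[[x]]$. The series converges coefficientwise, since the coefficient of $x^n$ receives a contribution only from the term $A^n$. Multiplying by $I - xA$ telescopes to $I$, on either side. Separately, $\det(I - xA)$ is a power series with constant term $\det I = 1$, so it is a unit in $R[[x]]$.

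Finally, I will apply the adjugate identity $M\,\operatorname{adj}(M) = \operatorname{adj}(M)\,M = \det(M)\, I$. This identity holds over any commutative ring: it is a polynomial identity in the entries, or equivalently follows from Laplace expansion. Since $\det(I - xA)$ is a unit, it gives
\[(I - xA)^{-1} = \frac{\operatorname{adj}(I - xA)}{\det(I - xA)}.\]
By definition, the $(i,j)$ entry of the adjugate is the signed cofactor obtained by deleting row $j$ and column $i$, which is $C_{ji}(I - xA)$. This is the claimed formula.

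The only point requiring care is working over a general commutative ring $R$ rather than a field. One must avoid any argument that divides by elements of $R$, and the relevant invertibility has to come from the constant term of $\det(I - xA)$ being $1$. The telescoping argument and the adjugate identity both respect this restriction, so I expect no genuine obstacle. One should also note that the sign $(-1)^{i+j}$ is included in the cofactor $C_{ji}$.
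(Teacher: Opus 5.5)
Your proof is correct and follows the standard argument for this result. The paper gives no proof of its own and simply cites \cite[Theorem 4.7.2]{stanley}, where the argument is the same as yours: identify $a_{ij}(n)$ with $(A^n)_{ij}$, sum $\sum_{n} x^n A^n = (I - xA)^{-1}$ over $R[[x]]$, and read off the entries of the inverse via the adjugate, with $\det(I - xA)$ a unit because its constant term is $1$.
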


\subsection{Proof Outline}

We now begin the setup for the proof of~\cref{thm:rational}. For the remainder of this section, $J$ will always be a finite set.
We split into cases depending on whether neither $\lambda$ nor $\mu$ has a large part, exactly one of $\lambda$ and $\mu$ has a large part, or both $\lambda$ and $\mu$ have large parts.

Throughout the proof, take $N_0 = N_0(J)$ to be sufficiently large, so that there is no ambiguity between large and small parts for all $n \ge N_0$. We now restrict our attention to $n \ge N_0$.

\begin{definition}
    Let $b_{n, k} ^ J$ be the number of seaweed meanders $\frac{\lambda}{\mu}$ of size $n$ and index $k$ such that $\lambda$ and $\mu$ have all parts in $J_n$ and each has at most one large part. Then define
    \[B ^ J(x, y) = \sum_{n \ge N_0} \sum_{k \ge 0} b_{n, k} ^ J x ^ n y ^ k.\]
    Furthermore, for $i \in \{0, 1, 2\}$, let
    \[b_{n, k, i} ^ J\] be the number of seaweed meanders $\frac{\lambda}{\mu}$ of size $n$ and index $k$ such that exactly $i$ of $\{\lambda, \mu\}$ have a large part, and define the corresponding generating functions $B_i ^ J(x, y)$ in the natural way.
\end{definition}

By definition, we have
\[b_{n, k} ^ J = b_{n, k, 0} ^ J + b_{n, k, 1} ^ J + b_{n, k, 2} ^ J.\]
We first show that it is enough to consider $B ^ J$ to prove rationality.

\begin{prop}\label{prop:ab-equiv}
    The generating function $A ^ J$ is rational if and only if $B ^ J$ is rational.
\end{prop}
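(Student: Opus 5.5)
The approach is to show that $A^J$ and $B^J$ differ by a polynomial plus terms that are rational functions of $B^J$ (and conversely). Then rationality of one forces rationality of the other.

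\textbf{Step 1: small sizes.} The terms of $A^J$ with $n < N_0$ contribute only a polynomial in $x$ and $y$. This is because for each fixed $n$ there are finitely many seaweed meanders of size $n$, each of index at most $n$. So it suffices to compare the tail $\sum_{n \ge N_0}\sum_k a^J_{n,k}x^ny^k$ with $B^J$.

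\textbf{Step 2: bounding the number of large parts.} For $n \ge N_0$, a large part has size $n+j$ with $j \in J^-$, so $j\le 0$ and $j \ge \min J$. Two large parts in the same composition would have total size at least $2n + 2\min J > n$ once $N_0$ is large enough. Hence each of $\lambda,\mu$ has at most one large part, and
\[
a^J_{n,k}=b^J_{n,k}\qquad\text{for } n\ge N_0 .
\]
This gives $A^J(x,y) = P(x,y) + B^J(x,y)$ with $P$ a polynomial.

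\textbf{Step 3: the role of $0\in J$.} If $0 \in J$, then $n+0 = n$ is an allowed part. The composition $(n)$ contains it and is consistent with the above, since it has exactly one large part. A part of size $0$ is not positive and so never occurs, because $J^+ = J\cap\NN$ excludes $0$. Thus no further adjustment is needed.

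\textbf{Conclusion and main obstacle.} Since a polynomial is rational and rational functions are closed under addition and subtraction, $A^J$ is rational if and only if $B^J$ is. The only point needing care is choosing $N_0$ so that three things hold simultaneously for $n\ge N_0$: the sets $J^+$ and $n+J^-$ are disjoint, every element of $n+J^-$ is positive, and two large parts cannot fit in a composition of $n$. Taking $N_0 > 2\max_{j\in J}|j| + 1$ handles all three. I do not expect any genuine obstacle here; the statement is essentially a bookkeeping reduction that isolates finitely many initial terms.
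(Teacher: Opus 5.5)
Your proof is correct and matches the paper's argument: for $n \ge N_0$ no composition of $n$ can contain two parts from $n + J^-$, so $a^J_{n,k} = b^J_{n,k}$ and $A^J - B^J$ is a polynomial. Your explicit choice $N_0 > 2\max_{j \in J}|j| + 1$ and the remark about $0 \in J$ simply make the paper's ``sufficiently large'' precise. The opening mention of ``rational functions of $B^J$'' is unnecessary, since only a polynomial difference ever arises.
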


\begin{proof}
    Note that for all sufficiently large $n$, any composition of $n$ has at most one part whose size is in $n + J ^ -$. Hence, for all large enough $n$, $a_{n, k} ^ J = b_{n, k} ^ J$, so $A ^ J$ and $B ^ J$ differ by a polynomial.
\end{proof}

Thus, it suffices to consider the seaweed meanders enumerated by $b_{n, k} ^ J$. We prove the rationality of $B ^ J$ by proving that each of the generating functions $B_i ^ J$ is rational.

\begin{lemma}\label{lem:zero-sum-seq}
    For sufficiently large $n$, the compositions $\lambda \vDash n$ such that $\lambda$ has exactly one large part are in bijection with sequences of elements of $J$ with sum $0$ containing exactly one non-positive term.
    In particular, the number of such compositions is bounded by a constant depending only on $J$.
\end{lemma}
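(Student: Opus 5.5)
We construct the bijection explicitly. Let $N_0$ be as above, so that for $n \ge N_0$ every element of $n + J^-$ exceeds every element of $J^+$; in particular no value is both small and large. Let $\lambda \vDash n$ have exactly one large part, say $\lambda_i = n + j$ with $j \in J^-$, so $j \le 0$. All the other parts $\lambda_\ell$ with $\ell \neq i$ lie in $J^+$.

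The map replaces the large part $n + j$ by $j$ and leaves the rest of the sequence unchanged. The result is
\[(\lambda_1, \dots, \lambda_{i-1}, j, \lambda_{i+1}, \dots, \lambda_r).\]
This is a sequence of elements of $J$ with exactly one non-positive term, namely $j$, since every element of $J^+$ is positive. Its sum is $n - (n + j) + j = 0$.

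For the inverse, take a sequence of elements of $J$ with sum $0$ whose only non-positive term is $j$, in position $i$. Replace $j$ by $n + j$. The new sequence sums to $n$. All its other terms lie in $J^+$ and are therefore small parts. The term $n + j$ is positive, because $n + j \ge n - \max_{x \in J^-}|x| > 0$ once $n \ge N_0$, and it lies in $n + J^-$, so it is a large part. Hence we obtain a composition of $n$ with parts in $J_n$ and exactly one large part. The two maps are clearly mutually inverse, since each only changes the single distinguished entry by $\pm n$ and the position of that entry is determined on both sides (it is the unique large part, respectively the unique non-positive term). The absence of ambiguity between large and small parts for $n \ge N_0$ is exactly what makes these maps well defined.

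For the bound, let $j$ be the non-positive term, so that the positive terms sum to $-j \le \max_{x \in J^-}|x| =: K$. Each positive term is at least $1$, so there are at most $K$ of them. The number of sequences with entries in the finite set $J^+$, of length at most $K$ and sum at most $K$, is finite. The remaining choices are $j \in J^-$ and the position of $j$, of which there are at most $K+1$. The total count is therefore bounded by a constant depending only on $J$ and independent of $n$.

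The only delicate point is the well-definedness of the bijection, in particular the positivity of $n + j$ and the fact that no part is simultaneously small and large; both are guaranteed by the choice of $N_0$, and the rest is routine.
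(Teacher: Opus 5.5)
Your proof is correct and follows the paper's argument. You use the same bijection, replacing the large part $n+j$ by $j$, and the same bound: the positive terms sum to $-j$, so the sequence has length at most $|j|+1$. You also spell out the inverse map and the positivity of $n+j$ for large $n$, which the paper leaves implicit.
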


\begin{proof}
    For any such composition $\lambda$, replacing the part $n + j$ with $j \in J ^ -$ gives a sequence with sum $0$ and exactly one non-positive part.
    
    For the second half, the number of such zero-sum sequences is bounded, since if the non-positive term is $j \in J ^ -$, the sequence has length at most $\abs{j} + 1$.
\end{proof}

Using the above lemma, we can now handle the case where $J$ has non-positive elements. In particular, since $J$ is finite, we can subdivide the cases where $\lambda$ or $\mu$ has a large part based on which element of $n + J ^ -$ appears.

The following definition will be useful for the cases where at least one of $\lambda$ or $\mu$ has a large part.

\begin{definition}
    Let $\calS_J$ be the set of zero-sum sequences with elements in $J$ that have exactly one non-positive element.
    For $s \in \calS_J$, let $C_n(s)$ be the associated composition of $n$, by adding $n$ to the non-positive term.
\end{definition}

In the following sections, we will use the language of (deterministic) finite automata. In particular, the notions of \textit{states}, \textit{transitions}, and an \textit{input string} will be important. For a standard reference, see~\cite[Chapter 1]{Sipser_2013}.

To prove that $B_0 ^ J$ and $B_1 ^ J$ are rational, we will first describe an algorithm which computes the index of the appropriate seaweed meanders. Then by showing that these algorithms require only finitely many states, we can translate the algorithms into finite automata. Finally, by assigning edge weights to the underlying graphs, we will be able to apply~\cref{thm:stanley-gen-func}. For $B_2 ^ J$, a combinatorial argument will prove rationality directly.

\subsection{Rationality of $B_0 ^ J$}\label{subsec:b0}
We first consider the case where neither $\lambda$ nor $\mu$ has a large part.

\begin{lemma}\label{lem:ab-positive}
    For any finite $J \subseteq \ZZ$, $B_0 ^ J$ is rational if and only if $A ^ {J ^ +}$ is rational.
\end{lemma}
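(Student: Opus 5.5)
For $n \ge N_0$, the seaweed meanders counted by $b_{n,k,0}^J$ are those $\frac{\lambda}{\mu}$ of size $n$ and index $k$ in which every part of $\lambda$ and $\mu$ is small. For such $n$ the small elements of $J_n$ are exactly $J^+$, and a part equal to an element of $J^+$ is never simultaneously large, by the choice of $N_0$. So for $n \ge N_0$ these meanders are precisely the meanders all of whose parts lie in $J^+$.

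The set $(J^+)_n$ used to define $A^{J^+}$ is $J^+ \cup (n + \emptyset) = J^+$, so $a_{n,k}^{J^+}$ counts exactly the same objects. Hence $b_{n,k,0}^J = a_{n,k}^{J^+}$ for all $n \ge N_0$ and all $k$. It follows that
\[A^{J^+}(x,y) - B_0^J(x,y) = \sum_{n < N_0} \sum_{k \ge 0} a_{n,k}^{J^+} x^n y^k.\]

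This difference is a polynomial. For each fixed $n$ there are finitely many pairs of compositions of $n$, and the index of a meander of size $n$ is at most $n$ (paths plus twice cycles is at most the number of vertices), so only finitely many coefficients are nonzero. Since rationality is preserved under adding or subtracting a polynomial, $B_0^J$ is rational if and only if $A^{J^+}$ is.

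The only point needing care is that for $n \ge N_0$ no element of $J^+$ coincides with an element of $n + J^-$. This is exactly the defining property of $N_0$: since $J$ is finite, it suffices to take $N_0 > \max J^+ - \min J^-$ (or any $N_0$ if $J^-$ is empty). I therefore do not expect a real obstacle here. The substantive work, showing that $A^{J^+}$ is rational via a finite automaton and \cref{thm:stanley-gen-func}, is postponed to the positive case.
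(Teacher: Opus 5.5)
Your proposal is correct and matches the paper's argument. The paper likewise observes that $b^J_{n,k,0} = a^{J^+}_{n,k}$ for all $n \ge N_0$, so $B_0^J$ and $A^{J^+}$ differ by a polynomial. You also spell out why that difference is finite and why $N_0 > \max J^+ - \min J^-$ suffices, which the paper leaves implicit.
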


\begin{proof}
    As in the proof of~\cref{prop:ab-equiv}, $B_0 ^ J$ and $A ^ {J ^ +}$ differ by a polynomial, so the result follows.
\end{proof}

Thus, it suffices to consider the case in which $J \subseteq \NN$, which we now state.

\begin{prop}\label{prop:rational-positive}
    For any finite $J \subseteq \NN$, the generating function $A ^ J(x, y)$ is rational.
\end{prop}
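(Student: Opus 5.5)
We will prove that $A^J(x,y)$ is rational for finite $J\subseteq\NN$ by building a finite weighted directed graph whose walks encode pairs $(\lambda,\mu)$ with parts in $J$, together with the index, and then applying~\cref{thm:stanley-gen-func}. Let $L=\max J$. We read the seaweed meander from left to right, one vertex at a time. The state after reading vertex $t$ records three pieces of data: the position of $t$ inside its current top part (which part size $j\in J$ was chosen, and the offset within it); the same for the bottom part; and the connectivity of the ``open'' vertices. Since every part has size at most $L$, the only vertices with arcs still to be drawn to the right lie within the current top and bottom blocks, so at most $2L$ vertices are open at any time.

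The connectivity information will be the following. The open vertices are those in the current top block or the current bottom block whose partner arc has not yet been drawn. Each open vertex $v$ belongs to some connected component of the graph drawn so far, restricted to vertices $\le t$. Every component meeting the open set is a path, because a cycle is closed and has no open vertices. Such a component has at most two open endpoints. In general a component may also contain open vertices in its interior: a vertex of degree one whose second arc is pending. We will therefore record the set partition of the (at most $2L$) open positions, indexed by their offsets inside the current blocks, induced by connectivity, together with the degree of each open vertex. This is a finite amount of data bounded in terms of $L$, so the set of states is finite.

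A transition reads vertex $t+1$ and does three things. It decides whether a new top part and/or a new bottom part starts at $t+1$ and chooses its size in $J$. It adds the arcs landing at $t+1$: the top arc from the mirror vertex in the top block if $t+1$ lies in the second half, and similarly for the bottom. Finally it updates the partition. We assign weight $x\,y^{d}$ to the transition, where $d$ is the index contributed by components that become closed (no open vertices remain) at this step. A closed cycle contributes $2$ and a closed path or isolated vertex contributes $1$. Closure is detectable from the state, since a component closes exactly when its last open vertex receives its final arc. The edge cases need care. Isolated centers of odd parts contribute no arc from that side. A vertex whose top and bottom arcs go to the same vertex produces a $2$-cycle (for example $\frac{2}{2}$), which must be recognized as a cycle from the pending-partner information.

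We then take a start state (empty) and require walks to end in an accepting state in which both blocks are complete, i.e. the last vertex ends parts on both sides. The weighted walk count is then exactly $\sum_k a^J_{n,k}y^k$. Applying~\cref{thm:stanley-gen-func} over $R=\ZZ[y]$ and summing over the finitely many accepting end states gives a rational function. The main obstacle is verifying that the state really determines the index contribution. The key check is that whether a merging step creates a cycle (two open ends of the same component joined) or merely joins paths can be read from the partition; this holds because two ends lie in the same block of the partition exactly when joining them closes a cycle. Everything else is routine bookkeeping.
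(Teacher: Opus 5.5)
Your proposal is correct and is essentially the paper's proof: a left-to-right automaton whose state records the current top and bottom part sizes and offsets, together with the connectivity partition of the vertices still awaiting arcs. Edges carry weights $y^{d}$ for the index increments, with a cycle detected when the arcs into the new vertex come from the same block of the partition, and \cref{thm:stanley-gen-func} then gives rationality. The differences are cosmetic: the paper reads the string $f\left(\frac{\lambda}{\mu}\right)$ deterministically and counts walks from the empty state back to itself, whereas you branch on part sizes and stop in accepting states. Two small fixes: put $y^{d}$ rather than $x y^{d}$ on the edges so the theorem over $\ZZ[y]$ supplies the $x$, and note that an open vertex has current degree at most $1$, so it is always an endpoint of its path and never interior.
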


We now describe an algorithm which computes the index of a seaweed meander. The idea is to encode a seaweed meander of size $n$ as a string of length $n$, then construct a finite automaton which reads this string as input.
Using this finite automaton, we can then use~\cref{thm:stanley-gen-func} to deduce that the generating function $A ^ J$ is rational.

\begin{definition}
    For a composition $\lambda = (\lambda_1, \dots, \lambda_\ell) \vDash n$, define $f(\lambda)$ to be the sequence of length $n$ where $\lambda_1$ is repeated $\lambda_1$ times, followed by $\lambda_2$ repeated $\lambda_2$ times, and so on. For a seaweed meander $\frac{\lambda}{\mu}$, define the sequence of formal symbols \[f\left(\frac{\lambda}{\mu}\right) = \left(\frac{f(\lambda)_i}{f(\mu)_i}\right)_{i = 1} ^ n.\]
\end{definition}
Note that this encoding is injective, so it encodes the entire data of the seaweed meander. We will use this encoding to compute the index.

We take as input the string $f\left(\frac{\lambda}{\mu}\right)$. We start with an empty initial state, with no vertices or edges. Upon reading each input character, we add a new vertex at the right of the current state, initially with no edges adjacent to it.

If that input character completes a part in either $\lambda$ or $\mu$, add the corresponding edges to the current state. Then we may remove any vertices for which the parts in both $\lambda$ and $\mu$ have been completed, updating the index as required.

Note that when the algorithm terminates, we are again in the empty initial state, since all vertices will have been accounted for in both $\lambda$ and $\mu$.

In~\cref{ex:exec-1,fig:execution}, we show how such a computation may proceed.

\begin{ex}\label{ex:exec-1}
    Take $n = 7$, $\lambda = (5, 2)$ and $\mu = (2, 4, 1)$. 
    Then
    \[f\left(\frac{\lambda}{\mu}\right) = \left(\frac{5}{2}, \frac{5}{2}, \frac{5}{4}, \frac{5}{4}, \frac{5}{4}, \frac{2}{4}, \frac{2}{1}\right).\]
    The computation is then given by the process shown in~\cref{fig:execution}.
    \begin{figure}[!ht]
        \begin{center}
            \resizebox{\textwidth}{!}{%
                \begin{tikzpicture}[every node/.style={font=\ttfamily\Huge},every path/.style={draw, very thick},subfig/.style={draw, rounded corners, inner sep=7pt}]
                    \tikzmath{\r = 0.1;\d=0.25;\gap=4;}
                    \node[subfig] (A) at (0,0){
                        \begin{tikzpicture}
                            \node at (0,0) {index = 0};
                        \end{tikzpicture}
                    };
                    \node[subfig, anchor=west] (B) at ($(A.east)+(\gap,0)$){
                        \begin{tikzpicture}
                            \fill[red] (1,0) circle (\r);
                            \node[anchor=center] at (0.75,-1.5) {index = 0};
                            \draw[dashed] (0.5, 1)--(0.5, -1);
                        \end{tikzpicture}
                    };
                    \node[subfig, anchor=west] (C) at ($(B.east)+(\gap,0)$){
                        \begin{tikzpicture}
                            \fill[red] (1,0) circle (\r);
                            \fill[red] (2,0) circle (\r);
                            \node[anchor=center] at (1.25,-1.5) {index = 0};
                            \draw[red] (1,0) arc (-180:0:0.5);
                            \draw[dashed] (0.5, 1)--(0.5, -1);
                        \end{tikzpicture}
                    };
                    \node[subfig, anchor=west] (D) at ($(C.east)+(\gap,0)$){
                        \begin{tikzpicture}
                            \fill[red] (1,0) circle (\r);
                            \fill[red] (2,0) circle (\r);
                            \fill[blue] (3,0) circle (\r);
                            \node[anchor=center] at (1.75,-1.5) {index = 0};
                            \draw[red] (1,0) arc (-180:0:0.5);
                            \draw[dashed] (0.5, 1)--(0.5, -1);
                        \end{tikzpicture}
                    };
                    \node[subfig, anchor=north] (E) at ($(D.south)-(0,1.75)$){
                        \begin{tikzpicture}
                            \fill[red] (1,0) circle (\r);
                            \fill[red] (2,0) circle (\r);
                            \fill[blue] (3,0) circle (\r);
                            \fill[green] (4,0) circle (\r);
                            \node[anchor=center] at (2.25,-1.5) {index = 0};
                            \draw[red] (1,0) arc (-180:0:0.5);
                            \draw[dashed] (0.5, 1)--(0.5, -1);
                        \end{tikzpicture}
                    };
                    \node[subfig] (F) at (C |- E){
                        \begin{tikzpicture}
                            \fill[gray] (1,0) circle (\r);
                            \fill[gray] (2,0) circle (\r);
                            \fill[blue] (3,0) circle (\r);
                            \fill[red] (4,0) circle (\r);
                            \fill[red] (5,0) circle (\r);
                            \node[anchor=center] at (3,-1.5) {index = 0};
                            \draw[red] (1,0) arc (-180:0:0.5);
                            \draw[red] (1,0) arc (180:0:2);
                            \draw[red] (2,0) arc (180:0:1);
                            \draw[dashed] (2.5, 2)--(2.5, -1);
                        \end{tikzpicture}
                    };
                    \node[subfig] (G) at (B |- E){
                        \begin{tikzpicture}
                            \foreach \x in {1,...,5}
                                \fill[gray] (\x,0) circle (\r);
                            \fill[blue] (6,0) circle (\r);
                            \node[anchor=center] at (3.5,-2.5) {index = 2};
                            \draw[gray] (1,0) arc (-180:0:0.5);
                            \draw[gray] (4,0) arc (-180:0:0.5);
                            \draw[gray] (1,0) arc (180:0:2);
                            \draw[gray] (2,0) arc (180:0:1);
                            \draw[blue] (3,0) arc (-180:0:1.5);
                            \draw[dashed] (5.5, 2)--(5.5, -2);
                        \end{tikzpicture}
                    };
                    \node[subfig] (H) at (A |- E){
                        \begin{tikzpicture}
                            \foreach \x in {1,...,7}
                                \fill[gray] (\x,0) circle (\r);
                            \node[anchor=center] at (4.25,-2.5) {index = 3};
                            \draw[gray] (1,0) arc (-180:0:0.5);
                            \draw[gray] (4,0) arc (-180:0:0.5);
                            \draw[gray] (1,0) arc (180:0:2);
                            \draw[gray] (2,0) arc (180:0:1);
                            \draw[gray] (6,0) arc (180:0:0.5);
                            \draw[gray] (3,0) arc (-180:0:1.5);
                            \draw[dashed] (7.5, 2)--(7.5, -2);
                        \end{tikzpicture}
                    };
                    \draw[->] ($(A.east)+(\d,0)$)--node[above, font=\Huge]{$\frac{5}{2}$}($(B.west)-(\d,0)$);
                    \draw[->] ($(B.east)+(\d,0)$)--node[above, font=\Huge]{$\frac{5}{2}$}($(C.west)-(\d,0)$);
                    \draw[->] ($(C.east)+(\d,0)$)--node[above, font=\Huge]{$\frac{5}{4}$}($(D.west)-(\d,0)$);
                    \draw[->] ($(D.south)-(0,\d)$)--node[right, font=\Huge]{$\frac{5}{4}$}($(E.north)+(0,\d)$);
                    \draw[->] ($(E.west)-(\d,0)$)--node[above, font=\Huge]{$\frac{5}{4}$}($(F.east)+(\d,0)$);
                    \draw[->] ($(F.west)-(\d,0)$)--node[above, font=\Huge]{$\frac{2}{4}$}($(G.east)+(\d,0)$);
                    \draw[->] ($(G.west)-(\d,0)$)--node[above, font=\Huge]{$\frac{2}{1}$}($(H.east)+(\d,0)$);
                \end{tikzpicture}
            }
        \end{center}
        \caption{Index computation for $\lambda = (5, 2)$, $\mu = (2, 4, 1)$. Each color corresponds to a connected component of the current state, and the vertices to the right of the dashed line are those which are stored in the current state.}
        \label{fig:execution}
    \end{figure}
\end{ex}

We note that there is one subtlety in the step where we update the index. A cycle occurs in the seaweed meander if and only if a connected component is completed by an arc which connects two vertices already in the same connected component. Otherwise, the connected component is a path. This is something which we can account for using the algorithm described above, since we track the connected components of all vertices which are in parts that are not yet complete.

In the example, this occurs when the algorithm completes the part of size $4$ in $\mu$ and adds an arc between vertices $4$ and $5$, which were already connected.

We now prove that $A ^ J$ is rational whenever $J \subseteq \NN$.

\begin{proof}[Proof of~\cref{prop:rational-positive}]
    Fix $J$. The case where $J = \emptyset$ is clear, so assume $J$ is non-empty.
    
    Let $C = \max \{j \in J\}$. 
    Note that the state tracks only vertices such that some, but not all vertices from their parts in $\lambda$ and $\mu$ have been seen in the input. Since all parts are bounded by $C$, the state tracks at most $C$ vertices at once.

    Each state corresponds to some set partition on the tracked vertices, along with the data (for each of $\lambda$ and $\mu$) of the size of the current part and the number of vertices in that part which have been processed. The latter two quantities are bounded by $C$, so there are finitely many possible states in total.

    Thus, the algorithm has only finitely many possible states, so there exists a finite automaton where edges are labelled by input characters such that the transitions agree with the above algorithm.

    Now, for a transition which increases the index by $i$, assign a weight of $y ^ i$ to that edge. The walks of length $n$ which start and end at the empty state then correspond exactly to seaweed meanders of size $n$ with parts in $J$. Furthermore, if the seaweed meander has index $k$, the walk has weight $y ^ k$.

    Thus, using the notation from~\cref{thm:stanley-gen-func}, we have
    \[A ^ J(x, y) = \sum_M x ^ {\abs{M}} y ^ {\ind(M)} = \sum_{\Gamma} w(\Gamma) x ^ {\abs{\Gamma}},\] where the first sum is over all seaweed meanders $M$ with parts in $J$, and the second sum is over all walks $\Gamma$ from the empty state to itself.

    By~\cref{thm:stanley-gen-func} with $R = \QQ[y]$, the generating function for such walks is rational, so $A ^ J(x, y)$ is rational.
\end{proof}

\subsection{Rationality of $B_1 ^ J$}

We now consider the case where exactly one of $\lambda$ and $\mu$ has a large part. Fix a finite subset $J \subseteq \ZZ$.

\begin{prop}\label{prop:b1-rational}
    For any finite $J \subseteq \ZZ$, $B_1 ^ J$ is rational.
\end{prop}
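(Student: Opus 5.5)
By the symmetry $\frac{\lambda}{\mu} \leftrightarrow \frac{\mu}{\lambda}$, which preserves the index, it suffices to count seaweed meanders in which $\lambda$ has exactly one large part and $\mu$ has only small parts; $B_1^J$ is then twice this generating function, restricted to $n \ge N_0$. By~\cref{lem:zero-sum-seq}, for $n \ge N_0$ the possible $\lambda$ are exactly $C_n(s)$ for $s \in \calS_J$, and $\calS_J$ is finite. Since a finite sum of rational functions is rational, I would fix one $s = (s_1, \dots, s_p, j, t_1, \dots, t_q) \in \calS_J$ with $j \le 0$ and show that the generating function $\sum_{n, \mu} x^n y^{\ind(C_n(s)/\mu)}$ is rational, where $\mu$ ranges over compositions of $n$ with parts in $J^+$.

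The composition $C_n(s)$ has small parts $s_1, \dots, s_p$ occupying a prefix of fixed length $P = \sum s_i$, then one huge part of size $n + j$, then small parts $t_1, \dots, t_q$ occupying a suffix of fixed length $Q = \sum t_i$. The key idea is to fold the huge part. Its arcs join position $P + i$ to $P + n + j + 1 - i$, so they pair the left end of the row with the right end. I would therefore read the vertices from both ends at once. At step $i$ the input character is the pair consisting of the $\mu$-data at position $i$ and the $\mu$-data at position $n+1-i$, where each position is encoded as in $f(\mu)$. The huge arc then connects the $(i-P)$th vertex from the left with the $(i-Q')$th vertex from the right, where $Q'$ is a fixed offset determined by $Q$. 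This is a bounded shift, so it can be handled by keeping a buffer of $O(|P - Q| + \max J)$ recent vertices on each side. The fixed prefix and suffix in $\lambda$ are likewise handled by finitely many initial states.

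The state of the automaton records the current partial parts of $\mu$ on the left and on the right (sizes and progress), the finite buffer, and the set partition induced on all vertices still active, meaning vertices incident to an unfinished $\mu$-part or a pending huge arc. Each component of the partial graph that touches no active vertex is already complete, so it can be scored immediately: a path contributes $y$ and a cycle contributes $y^2$, the latter detected exactly as in the proof of~\cref{prop:rational-positive}, when an arc closes a component already connected. Because $\mu$ has parts bounded by $C = \max J^+$, only $O(C)$ vertices on each side are active at once, so there are finitely many states.

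The main obstacle is the middle of the row, where the two reading heads meet. For odd or even $n+j$ the huge part may have a central isolated vertex, and near the centre a $\mu$-part may straddle the meeting point, so the left and right partial $\mu$-parts must be glued together. I would handle this by allowing, from every state, one of finitely many terminal ``closing'' transitions. Each such transition guesses a finite middle configuration of length less than $2C + O(1)$, checks that the left and right $\mu$-data are compatible with it, and performs the final merges and index updates, each closing transition carrying a weight $x^m y^k$. Each reading step carries weight $x^2$, since it consumes two vertices. The counted objects then correspond bijectively to walks from the initial state followed by one closing transition. By~\cref{thm:stanley-gen-func}, with $R = \QQ[y]$ and $x^2$ in place of $x$, the sum over walks is a finite linear combination of rational functions, multiplied by the polynomial weights of the closing transitions. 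Finally, discarding the finitely many terms with $n < N_0$ changes the series only by a polynomial, so the generating function for each fixed $s$ is rational, and hence $B_1^J$ is rational.
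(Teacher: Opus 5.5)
Your argument is essentially the paper's. The paper also fixes $s$ and puts the large part on one side, reads the other composition alternately from both ends (one symbol at a time, giving a word of length $n+1$, rather than in pairs), keeps a bounded window because the large-part arcs join positions at constant offset, handles the centre at an end-of-input symbol $\#$ instead of by closing transitions, applies the transfer-matrix theorem, and discards finitely many small-$n$ terms.

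One point needs tightening: for walks followed by a closing transition to be in bijection with meanders, you must fix the middle length (say $L$ or $L+1$ according to the parity of $n$, or simply $0$ or $1$). If closing transitions of every length below $2C+O(1)$ are allowed, each meander is counted once for every admissible stopping step.
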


Fix some $s \in \calS_J$, and suppose $\mu = C_n(s)$. (The other case is symmetric.)

We show how to compute the index of such a seaweed meander, using a method similar to the case where $J \subseteq \NN$. We again encode a seaweed meander into a string, which now will have length $n + 1$ instead of $n$.

Since we fix $s$, and hence $\mu$, the input only needs to encode the data of $\lambda$. Define $f(\lambda)$ as before, and let $g(\lambda)$ be the sequence obtained by reading the elements of $f(\lambda)$ in the order
\[f(\lambda)_1, f(\lambda)_n, f(\lambda)_2, f(\lambda)_{n - 1}, \dots, f(\lambda)_{\floor{\frac{n}{2}} + 1}, \#,\] where we append the end-of-string character $\#$.

\begin{ex}
    Again taking $\lambda = (5, 2)$, we have
    \[g(\lambda) = (5, 2, 5, 2, 5, 5, 5, \#).\] 
\end{ex}

We now use $g(\lambda)$ as the input to our algorithm.
As in~\cref{subsec:b0}, when reading the input $g(\lambda)$, we keep track of the current connected components as part of the state. However, since $g(\lambda)$ reorders the vertices in the input, we now have vertices from both the left and right ends of the seaweed meander.

Suppose the large part of $\mu$ contains the vertices in the interval $[a, n - b]$.
Since $a$ and $b$ are constants (independent of $n$), we can tell whether or not a vertex is in the large part of $\mu$ simply by counting the first $2\max(a, b)$ input characters.

We also update the arcs differently than before. For any part of $\lambda$ or small part of $\mu$, we add the arcs corresponding to that part whenever all vertices in that part have been read as input, as in the earlier algorithm. However, for each vertex $v \in [a, n - b]$, we add the edge $(v, n + a - b - v)$ once the input characters for both those vertices have been processed.

The end-of-string character $\#$ is required to combine the left and right halves when the input is complete, since the edges crossing $\frac{n}{2}$ cannot be added until the entire input is read, as demonstrated by~\cref{ex:alg-large-parts}.

We now give an example illustrating the above algorithm.
\begin{ex}\label{ex:alg-large-parts}
    Take $n = 6$, $\lambda = (1, 3, 2)$, and $s = (-2, 2)$, so that $\mu = C_6(s) = (4, 2)$. Then $f(\lambda) = (1, 3, 3, 3, 2, 2)$, so
    \[g(\lambda) = (1, 2, 3, 2, 3, 3, \#).\]
    The computation then proceeds as shown in~\cref{fig:alg-large-parts} Note that when the final character $\#$ is read, we add the edges $(2, 4)$ above and $(2, 3)$ below, since we now know that the three $3$s in the input come from a single part of $\lambda$. However, until we read the final input character, the vertices may still come from different parts of size $3$, so we cannot add the arcs in the second-to-last step.
    \begin{figure}[!ht]
        \begin{center}
            \resizebox{\textwidth}{!}{%
                \begin{tikzpicture}[every node/.style={font=\ttfamily\Huge},every path/.style={draw, very thick},subfig/.style={draw, rounded corners, inner sep=7pt}]
                    \tikzmath{\r = 0.1;\d=0.3;\gap=4;}
                    \node[subfig] (A) at (0,0){
                        \begin{tikzpicture}
                            \node at (0,0) {index = 0};
                        \end{tikzpicture}
                    };
                    \node[subfig, anchor=west] (B) at ($(A.east)+(\gap,0)$){
                        \begin{tikzpicture}
                            \fill[red] (1,0) circle (\r);
                            \node[anchor=center] at (0.75,-1.5) {index = 0};
                            \draw[dashed] (0.5, 1)--(0.5, -1);
                        \end{tikzpicture}
                    };
                    \node[subfig, anchor=west] (C) at ($(B.east)+(\gap,0)$){
                        \begin{tikzpicture}
                            \fill[red] (1,0) circle (\r);
                            \fill[blue] (3,0) circle (\r);
                            \node[anchor=center] at (2,-2) {index = 0};
                            \draw[dashed] (0.5, 1)--(0.5, -1);
                            \draw[dashed] (3.5, 1)--(3.5,-1);
                            \draw (2,1.5)--(2,-1.5);
                        \end{tikzpicture}
                    };
                    \node[subfig, anchor=west] (D) at ($(C.east)+(\gap,0)$){
                        \begin{tikzpicture}
                            \fill[red] (1,0) circle (\r);
                            \fill[green] (2, 0) circle (\r);
                            \fill[blue] (4,0) circle (\r);
                            \node[anchor=center] at (2.5,-2) {index = 0};
                            \draw[dashed] (0.5, 1)--(0.5, -1);
                            \draw[dashed] (4.5, 1)--(4.5,-1);
                            \draw (3,1.5)--(3,-1.5);
                        \end{tikzpicture}
                    };
                    \node[subfig, anchor=north] (E) at ($(D.south)-(0,1.75)$){
                        \begin{tikzpicture}
                            \fill[red] (1,0) circle (\r);
                            \fill[green] (2, 0) circle (\r);
                            \fill[gray] (4,0) circle (\r);
                            \fill[gray] (5,0) circle (\r);
                            \node[anchor=center] at (2.75,-2) {index = 2};
                            \draw[dashed] (0.5, 1)--(0.5, -1);
                            \draw[dashed] (3.5, 1)--(3.5,-1);
                            \draw (3,1.5)--(3,-1.5);
                            \draw[gray] (4.5,0) circle (0.5);
                        \end{tikzpicture}
                    };
                    \node[subfig] (F) at (C |- E){
                        \begin{tikzpicture}
                            \fill[red] (1,0) circle (\r);
                            \fill[green] (2, 0) circle (\r);
                            \fill[blue] (3,0) circle (\r);
                            \fill[gray] (5,0) circle (\r);
                            \fill[gray] (6,0) circle (\r);
                            \node[anchor=center] at (3.25,-2) {index = 2};
                            \draw[dashed] (0.5, 1)--(0.5, -1);
                            \draw[dashed] (4.5, 1)--(4.5,-1);
                            \draw (4,1.5)--(4,-1.5);
                            \draw[gray] (5.5,0) circle (0.5);
                        \end{tikzpicture}
                    };
                    \node[subfig] (G) at (B |- E){
                        \begin{tikzpicture}
                            \fill[red] (1,0) circle (\r);
                            \fill[green] (2, 0) circle (\r);
                            \fill[blue] (3,0) circle (\r);
                            \fill[red] (5,0) circle (\r);
                            \fill[gray] (6,0) circle (\r);
                            \fill[gray] (7,0) circle (\r);
                            \node[anchor=center] at (4,-2.5) {index = 2};
                            \draw[dashed] (0.5, 1)--(0.5, -1);
                            \draw[dashed] (5.5, 1)--(5.5,-1);
                            \draw (4,1.5)--(4,-1.5);
                            \draw[gray] (6.5,0) circle (0.5);
                            \draw[red] (1,0) arc (-180:0:2);
                        \end{tikzpicture}
                    };
                    \node[subfig] (H) at (A |- E){
                        \begin{tikzpicture}
                            \foreach \x in {1,...,6}
                                \draw[fill] (\x,0) circle (\r);
                            \node[anchor=center] at (4,-2.5) {index = 3};
                            \draw[gray] (5.5,0) circle (0.5);
                            \draw[gray] (1,0) arc (-180:0:1.5);
                            \draw[gray] (2,0) arc (180:0:1);
                            \draw[gray] (2,0) arc (-180:0:0.5);
                        \end{tikzpicture}
                    };
                    \draw[->] ($(A.east)+(\d,0)$)--node[above, font=\Huge]{$1$}($(B.west)-(\d,0)$);
                    \draw[->] ($(B.east)+(\d,0)$)--node[above, font=\Huge]{$2$}($(C.west)-(\d,0)$);
                    \draw[->] ($(C.east)+(\d,0)$)--node[above, font=\Huge]{$3$}($(D.west)-(\d,0)$);
                    \draw[->] ($(D.south)-(0,\d)$)--node[right, font=\Huge]{$2$}($(E.north)+(0,\d)$);
                    \draw[->] ($(E.west)-(\d,0)$)--node[above, font=\Huge]{$3$}($(F.east)+(\d,0)$);
                    \draw[->] ($(F.west)-(\d,0)$)--node[above, font=\Huge]{$3$}($(G.east)+(\d,0)$);
                    \draw[->] ($(G.west)-(\d,0)$)--node[above, font=\Huge]{$\#$}($(H.east)+(\d,0)$);
                \end{tikzpicture}
            }
        \end{center}
        \caption{Index computation for $\lambda = (1, 3, 2)$, $\mu = C_6(-2, 2)$. Each color corresponds to a connected component of the current state, and the vertices between the dashed lines are those which are stored in the current state. The solid line in the middle separates vertices on the left and right sides of the seaweed meander.}
        \label{fig:alg-large-parts}
    \end{figure}
\end{ex}

We now check that this algorithm only tracks a bounded number of vertices in the state at any time.

For any vertex not in the large part of $\mu$, its edges are entirely determined by the parts of $\lambda$ and $\mu$ immediately around it. In particular, since $J$ is finite, these parts are bounded, so the algorithm only needs to keep track of some constant number of vertices at any point in order to identify the edges adjacent to these vertices.

For any vertex $v \in [a, n - b]$, its edge for $\lambda$ depends only on the (small) part which contains it in $\lambda$, so that edge may be treated in the same way as in the algorithm from~\cref{subsec:b0}. However, its edge for $\mu$ comes from the large part of $\mu$, and connects it to $n + a - b - v$.

In this case, note that $v$ and $n + a - b - v$ are approximately symmetric with respect to $\frac{n}{2}$, up to a constant offset in terms of $a$ and $b$.

Without loss of generality, say $v < n + a - b - v$. For all $v$ such that
\[v < \frac{n}{2} < n + a - b - v,\]
vertex $v$ corresponds to the $(2v - 1)$th input character, and $n + a - b - v$ corresponds to the $(2b + 2v - 2a + 2)$th input character, and these differ by a constant. Thus, in order to add the edge between these two vertices, we only need to track a bounded number of vertices.

For any other $v \in [a, n - b]$, both $v$ and $n + a - b - v$ must be within $\abs{a - b}$ of $\frac{n}{2}$, in which case both $v$ and $n + a - b - v$ are within $2\abs{a - b} + 2$ characters of the end-of-input character $\#$. Hence, we can match these pairs as well using only a bounded number of vertices.
Thus, in order to reconstruct the arcs of the seaweed meander, it is enough to store a bounded amount of information, and thus this algorithm has finitely many states.

Using this algorithm, we can now show that $B_1 ^ J$ is rational.

\begin{proof}[Proof of~\cref{prop:b1-rational}]
    We use essentially the same method as in the case where $J \subseteq \NN$. For fixed $s \in \calS_J$ and $\mu = C_n(s)$, the above algorithm corresponds to a finite automaton which computes the index of a seaweed meander $\frac{\lambda}{\mu}$. Furthermore, the valid words of length $n + 1$ correspond exactly to seaweed meanders of size $n$ (recall that we append the character $\#$ at the end of the input).

    Now, if we assign $y ^ i$ to an edge whenever the index increments by $i$, the weight of the walk corresponding to input $g(\lambda)$ is $y ^ {\ind\left(\frac{\lambda}{\mu}\right)}$. Thus, by~\cref{thm:stanley-gen-func} with $R = \QQ[y]$, the generating function
    \[\sum_{n \ge 0} \sum_\lambda x ^ {n + 1} y ^ {\ind\left(\frac{\lambda}{C_n(s)}\right)}\] is rational, where the inner sum is over all compositions $\lambda \vDash n$ with parts in $J ^ +$.

    Now dividing the above generating function by $x$ and truncating a leading polynomial term, we find that
    \[B_1 ^ J(x, y) = 2\sum_{s \in \calS_J} \sum_{n \ge N_0} \sum_\lambda x ^ n y ^ {\ind\left(\frac{\lambda}{C_n(s)}\right)} \in \QQ(x, y),\] where the factor of $2$ accounts for whether the large part appears in $\lambda$ or in $\mu$. Note that we again use the fact that $\calS_J$ is finite for rationality.
\end{proof}

\subsection{Rationality of $B_2 ^ J$}

We now consider the case where $\lambda$ and $\mu$ each have exactly one large part. Unlike the previous two cases, we can prove the rationality directly, without resorting to an automaton-based argument.

\begin{prop}\label{prop:big-parts-periodic}
    Fix $s_1, s_2 \in \calS_J$. Then exactly one of the following holds.
    \begin{enumerate}[label=(\roman*)]
        \item For all sufficiently large $n$,
        \[\ind\left(\frac{C_{n + 1}(s_1)}{C_{n + 1}(s_2)}\right) = \ind\left(\frac{C_n(s_1)}{C_n(s_2)}\right) + 1.\]
        \item There exists some integer $N > 0$ such that
        \[\ind\left(\frac{C_{n + N}(s_1)}{C_{n + N}(s_2)}\right) = \ind\left(\frac{C_n(s_1)}{C_n(s_2)}\right)\] for all sufficiently large $n$.
    \end{enumerate}
\end{prop}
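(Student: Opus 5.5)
The plan is to write both compositions explicitly near their ends and reduce the middle of the meander to a pair of reflections. Write $C_n(s_1)$ as a bounded block of small parts of total size $a_1 - 1$, then the large part on the interval $[a_1, n - b_1]$, then a bounded block of total size $b_1$. Write $C_n(s_2)$ in the same way with $a_2, b_2$. Here $a_i, b_i$ depend only on $s_i$, not on $n$. Set $K = \max(a_1,a_2,b_1,b_2)$. Every vertex in the middle window $W_n = [K, n-K]$ has degree at most $2$, with upper arc $\sigma(v) = n + a_1 - b_1 - v$ and lower arc $\tau(v) = n + a_2 - b_2 - v$. So along any component passing through $W_n$, we alternate between $\sigma$ and $\tau$, and the composite $\tau\sigma$ is the translation $v \mapsto v + d$ with $d = (a_2-b_2)-(a_1-b_1)$, a constant. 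The case split is on whether $d = 0$.

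\textbf{Case $d = 0$.} Here $\sigma = \tau$ on the overlap of the two large parts. Each pair $\{v, \sigma(v)\}$ with $v \ne \sigma(v)$ is joined by an upper and a lower arc, so it forms a $2$-cycle contributing $2$. A fixed point of $\sigma$, which exists exactly when $n + a_1 - b_1$ is even, is an isolated vertex contributing $1$. The vertices outside the overlap form a fixed bounded configuration near each end, independent of $n$ once $n$ is large. Going from $n$ to $n+1$ adds exactly one vertex to the overlap, changing either ``$m$ pairs'' to ``$m$ pairs plus a fixed point'' or ``$m$ pairs plus a fixed point'' to ``$m+1$ pairs''. In both cases the index increases by exactly $1$, giving (i).

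\textbf{Case $d \neq 0$.} First I show that the number of components is bounded. Any component meeting $W_n$ follows the orbit of the translation by $\pm d$ inside the window. Hence it either leaves $W_n$ into one of the two end regions, each of bounded size $O(K)$, or it terminates at a fixed point of $\sigma$ or $\tau$, of which there are at most two. A component cannot close up inside $W_n$, since the translation by $d \ne 0$ drifts monotonically. So every component contains one of $O(K)$ distinguished vertices, and the index is bounded uniformly in $n$. This already shows (i) and (ii) are mutually exclusive, since (i) forces the index to be unbounded.

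Next I prove periodicity with $N = 2|d|$, and this is the step I expect to be the main obstacle, because one must check carefully that the entire connectivity pattern is determined by residues. The index is determined by the connectivity among the distinguished vertices (the end regions together with the at most two centres). That connectivity is in turn determined by the following data:
\begin{itemize}
\item the end gadgets, which are fixed for large $n$;
\item for each vertex at which a walk from the left end region enters $W_n$, the vertex at which its translation orbit exits, measured from the right end;
\item whether the walk hits a centre of $\sigma$ or $\tau$ on the way.
\end{itemize}
An orbit starting at $v$ near the left end visits $v + jd$ and, in the alternating steps, $\sigma(v + jd)$. Its exit position relative to $n$ therefore depends only on $v$ and on $n \bmod |d|$. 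Whether it meets a fixed point depends on the parity of $n + a_i - b_i$ and on residues mod $|d|$, hence only on $n \bmod 2|d|$. The translation $n \mapsto n + 2|d|$ just lengthens every middle chain without changing which distinguished vertices are joined, or whether a joined chain closes into a cycle. So for all large $n$ the components and their types correspond bijectively between sizes $n$ and $n + 2|d|$, and the index is equal, giving (ii). In writing this up I would formalise it by an explicit bijection between the middle chains for $n$ and for $n+2|d|$ that sends $v \mapsto v$ on the left half and $v \mapsto v + 2|d|$ on the right half, then check that it commutes with the end gadgets and with $\sigma, \tau$ up to this shift.
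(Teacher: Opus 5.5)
\paragraph{What is correct.} Your case $d=0$ is right, and tidier than the paper's shift argument. The overlap of the two large parts is a union of $2$-cycles plus possibly one isolated centre, and the remaining vertices form a gadget independent of $n$, so the index is $n$ plus a constant. Your boundedness argument for $d\neq 0$ is also right, and it gives the mutual exclusivity of (i) and (ii). Your period $2|d|$ agrees with the paper's $N=4\ell$, since $\ell=c_2-c_1=d/2$.

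\paragraph{The gap.} It is in the periodicity step for $d\neq 0$, which is the entire content of (ii). The residue statements are asserted rather than proved, and the explicit bijection you propose to prove them does not work when read as a vertex map, which is what checking that it commutes with $\sigma,\tau$ requires.
\begin{itemize}
\item A map that is the identity on one side of a cut and $v\mapsto v+2|d|$ on the other sends a large arc $(v,2c_i-v)$ to an arc of $M_{n+2|d|}$ only if the arc straddles the cut. Otherwise the image pair has sum $2c_i$ or $2c_i+4|d|$ instead of $2c_i+2|d|$.
\item Since $c_1\neq c_2$, the cut cannot sit at both centres, and once $|d|\ge 3$ every choice of cut breaks some arcs near a centre.
\item More basically, the centres of $M_{n+2|d|}$ sit at $c_i+|d|$. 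So whenever $c_i$ is a vertex, this map sends that degree-one vertex to a vertex of degree two.
\end{itemize}
You would still have to show that every broken arc and every centre is repaired by a path running only through the $2|d|$ inserted vertices. This is exactly the delicate part you flagged and left undone.

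\paragraph{How the paper handles it.} It places the inserted vertices far from both the centres and the ends, using a three-piece map:
\begin{itemize}
\item the identity for $v\le m$, with $m\approx n/3$;
\item a shift by $|d|$ (its $2\ell$) on the block $m<v<2c_1-m$, which is symmetric about $c_1$ and contains both centres;
\item a shift by $2|d|$ for $v\ge 2c_1-m$.
\end{itemize}
Then centres go to centres, and all small arcs and all arcs of the first large part go to arcs. The only defects are the $|d|$ arcs $(v,2c_2-v)$ of the second large part with $m<v\le m+|d|$. Each of these becomes a three-edge path through inserted vertices of degree two.

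\paragraph{Finishing your own route.} You can also avoid a vertex map entirely. Work with the compressed graph on the end regions and the centres, and actually carry out the residue computation. Consecutive vertices of a chain are reflections of one another about $c_1$ or $c_2$: they have the forms $v+jd$ and $2c_1-v-jd$, or the analogue with a $\tau$-step first. So a chain bounces between the two halves and may stop in either end region, not only the far one. You need to show that the end region and offset where it stops, or the centre at which it dies, depend only on $v$ and $n \bmod 2|d|$. Combined with your observations that components correspond to classes of distinguished vertices, and that path versus cycle is read off from their degrees, that would give a complete alternative proof.
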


\cref{ex:case1,ex:case2} motivate the proof. We first give an example of the situation in (i) in~\cref{fig:case-i}.

\begin{ex}\label{ex:case1}
    Suppose $s_1 = (1, -2, 1)$, and $s_2 = (1, 1, -4, 2)$. The seaweed meanders for $n = 7$ and $n = 8$ are shown in~\cref{fig:case-i}. Note that the two large parts share a midpoint, so increasing $n$ simply increases the number of nested cycles in the central region.
    \begin{figure}[!ht]
        \begin{center}
            \resizebox{0.6\textwidth}{!}{%
                \begin{tikzpicture}[every node/.style={font=\ttfamily\Huge},every path/.style={draw, very thick},subfig/.style={}]
                    \node[subfig, anchor=east](A) at (0,0){
                        \begin{tikzpicture}
                            \foreach \x in {1,...,7}
                                \draw[fill] (\x,0) circle (0.15);
                            \foreach \x in {1,2}
                                \draw (\x+1,0) arc (180:0:3-\x);
                            \foreach \x in {2}
                                \draw (\x+1,0) arc (-180:0:3-\x);
                            \draw (6,0) arc (-180:0:0.5);
                        \end{tikzpicture}
                    };
                    \draw[dashed] (1.5,2)--(1.5,-2);
                    \node[subfig, anchor=west] (B) at (3,0){
                        \begin{tikzpicture}
                            \foreach \x in {1,...,8}
                                \draw[fill] (\x,0) circle (0.15);
                            \foreach \x in {1,2,3}
                                \draw (\x+1,0) arc (180:0:3.5-\x);
                            \foreach \x in {2,3}
                                \draw (\x+1,0) arc (-180:0:3.5-\x);
                            \draw (7,0) arc (-180:0:0.5);
                        \end{tikzpicture}
                    };
                \end{tikzpicture}
        }
        \end{center}
        \caption{The seaweed meanders associated to $s_1 = (1, -2, 1)$ and $s_2 = (1, 1, -4, 2)$, with $n = 7$ on the left and $n = 8$ on the right.}
        \label{fig:case-i}
    \end{figure}
\end{ex}
We give an example of the situation in (ii) in~\cref{fig:case-ii}.

\begin{ex}\label{ex:case2}
    Take $s_1 = (0)$ and $s_2 = (1, -1)$, so that
    \[\frac{C_n(s_1)}{C_n(s_2)} = \frac{n}{1 \mid n - 1}.\]
    We may then take $N = 2$, as shown in~\cref{fig:case-ii}. Note that $N = 1$ works as well, but the proof below gives the value $N = 2$.
    \begin{figure}[!ht]
        \begin{center}
            \resizebox{0.6\textwidth}{!}{%
                \begin{tikzpicture}[every node/.style={font=\ttfamily\Huge},every path/.style={draw, very thick},subfig/.style={}]
                    \node[subfig] (A) at (0, 0){
                        \begin{tikzpicture}
                            \foreach \x in {1,...,4}
                                \draw[fill] (\x, 0) circle (0.15);
                            \foreach \x in {1,2}
                                \draw (\x, 0) arc (180:0:2.5-\x);
                            \foreach \x in {2}
                                \draw (\x,0) arc (-180:0:3-\x);
                        \end{tikzpicture}
                    };
                    \node[subfig, anchor=west] (B) at ($(A.east) + (3, 0)$){
                        \begin{tikzpicture}
                            \foreach \x in {1,...,6}
                                \draw[fill] (\x, 0) circle (0.15);
                            \foreach \x in {1,2,3}
                                \draw (\x, 0) arc (180:0:3.5-\x);
                            \foreach \x in {2,3}
                                \draw (\x,0) arc (-180:0:4-\x);
                        \end{tikzpicture}
                    };
                    \node[subfig, anchor=west] (C) at ($(B.east) + (3, 0)$){
                        \begin{tikzpicture}
                            \foreach \x in {1,...,8}
                                \draw[fill] (\x, 0) circle (0.15);
                            \foreach \x in {1,2,3,4}
                                \draw (\x, 0) arc (180:0:4.5-\x);
                            \foreach \x in {2,3,4}
                                \draw (\x,0) arc (-180:0:5-\x);
                        \end{tikzpicture}
                    };
                    \draw[dashed] ($0.5*(A.east) + 0.5*(B.west) + (0, 2)$)--($0.5*(A.east) + 0.5*(B.west) + (0, -2)$);
                    \draw[dashed] ($0.5*(B.east) + 0.5*(C.west) + (0, 2)$)--($0.5*(B.east) + 0.5*(C.west) + (0, -2)$);
                \end{tikzpicture}
        }
        \end{center}
        \caption{The seaweed meanders $\frac{n}{1 \mid n - 1}$ for $n = 4, 6, 8$, from left to right.}
        \label{fig:case-ii}
    \end{figure}
\end{ex}
In fact, as we will show in the proof, in case (ii) the number of paths and cycles are preserved when we go from $n$ to $n + N$, because the components of $\frac{C_n(s_1)}{C_n(s_2)}$ will be contractions of the components of $\frac{C_{n + N}(s_1)}{C_{n + N}(s_2)}$.

\begin{proof}[Proof of~\cref{prop:big-parts-periodic}]
    Take $n$ large. Suppose the large part of $C_n(s_i)$ covers the interval $[a_i, n - b_i]$. Note that $a_i$ and $b_i$ do not depend on $n$.
    
    Let
    \[c_i = \frac{n - b_i + a_i}{2}\] be the center of the large part in $C_n(s_i)$.

    We split into two cases, depending on whether $c_1 = c_2$, which is equivalent to $a_1 - b_1 = a_2 - b_2$. The case where $c_1 = c_2$ corresponds to (i), and $c_1 \ne c_2$ corresponds to (ii).

    First, suppose $c_1 = c_2 = c$. Note that since $n$ is large, each edge $(v, w)$ in the seaweed meander $M_n = \frac{C_n(s_1)}{C_n(s_2)}$ where $v < c < w$ comes from a large part. Furthermore, this edge becomes the edge $(v, w + 1)$ in $M_{n + 1} = \frac{C_{n + 1}(s_1)}{C_{n + 1}(s_2)}$, since the centers of the large parts increase by $\half$ when $n$ increases by $1$.

    By shifting all vertices greater than $c$ in $M_n$ up by $1$ in $M_{n + 1}$, we get a bijection between connected components in the two seaweed meanders not involving the vertices $c$ and $c + \half$.
    
    Now, if $c \in \ZZ$, then the isolated vertex at $c$ in $M_n$ becomes a cycle on the vertices $c$ and $c + 1$ in $M_{n + 1}$, while if $c \in \ZZ + \half$, then a new isolated vertex at $c + \half$ appears in $M_{n + 1}$.
    In either case, the index increases by $1$, as desired.

    Now, suppose $c_1 \ne c_2$. Without loss of generality, say $c_1 < c_2$. Let $\ell = c_2 - c_1$. We claim the second case of the proposition holds with $N = 4\ell$.
    As above, we have the seaweed meanders
    \begin{align*}
        M_n &= \frac{C_n(s_1)}{C_n(s_2)}, \\
        M_{n + 4\ell} &= \frac{C_{n + 4\ell}(s_1)}{C_{n + 4\ell}(s_2)}.
    \end{align*}
    We again define a map between vertices in $M_n$ and $M_{n + 4\ell}$ which preserves connected components.
    Fix some $m < \frac{n}{2}$ which is sufficiently far from each of the $a_i$, $b_i$, and $c_i$, for example, $m = \floor{\frac{n}{3}}$.

    Define
    \[f(v) = \begin{cases}
         v & v \le m, \\
         v + 2\ell & m < v < 2c_1 - m, \\
         v + 4\ell & 2c_1 - m \le v.
    \end{cases}\]
    We claim that $f(v)$ and $f(w)$ are in the same connected component in $M_{n + 4\ell}$ if and only if $v$ and $w$ are in the same connected component in $M_n$.

    In $M_{n + 4\ell}$, the large parts are centered at $c_1 + 2\ell$ and $c_2 + 2\ell$, so the edges of the large parts are of form $(v, 2c_i + 4\ell - v)$, for $i = 1, 2$.

    Consider any edge $(v, w)$ in $M_n$, with $v < w$. We have several cases.

    If $(v, w)$ is an edge from a small part, then either $v$ and $w$ both lie in $[1, m]$ or both lie in $[2c_1 - m, n]$. In either case, $v$ and $w$ are shifted by the same amount, so $(f(v), f(w))$ is again an edge for the same small part.

    Now suppose $(v, w)$ is an edge from a large part, so $v + w = 2c_i$ for some $i$. Note that if $v + w = 2c_1$, we always have $f(v) + f(w) = 2c_1 + 4\ell$, so $(f(v), f(w))$ is again an edge in $M_{n + 4\ell}$.

    It remains to consider when $v + w = 2c_2$. If $v \le m$, then $w = 2c_2 - v \ge 2c_1 - m$, so $f(v) + f(w) = 2c_2 + 4\ell$. If $v > m + 2\ell$, then
    \[w < 2c_2 - m - 2\ell = 2c_1 - m,\] so $f(v) = v + 2\ell$ and $f(w) = w + 2\ell$, and we again have $f(v) + f(w) = 2c_2 + 4\ell$. In either case, $(f(v), f(w))$ is an edge in $M_{n + 4\ell}$.

    Finally, suppose $m < v \le m + 2\ell$. Then
    \[w = 2c_2 - v \ge 2c_2 - m - 2\ell = 2c_1 - m,\] so $f(v) = v + 2\ell$ and $f(w) = w + 4\ell$. In $M_{n + 4\ell}$, we have the edges
    \[(v + 2\ell, w + 2\ell), (w + 2\ell, v), (v, w + 4\ell).\] The edge $(w + 2\ell, v)$ is in $M_{n + 4\ell}$ since
    \[v + w + 2 \ell = 2c_2 + 2\ell = 2c_1 + 4\ell.\]
    Hence, $f(v)$ and $f(w)$ are connected in $M_{n + 4\ell}$.

    Note that the paths in the last case use all edges which involve vertices from
    \[[n + 4\ell] \setminus f([n]) = [m + 1, m + 2\ell] \cup [2c_1 + 2\ell - m, 2c_1 + 4\ell - m - 1],\] since all edges adjacent to these vertices come from large parts. Thus, all edges of $M_{n + 4\ell}$ are either accounted for by these paths or are images of edges from $M_n$.

    We now check that each path or cycle in $M_n$ corresponds to a path or cycle, respectively, in $M_{n + 4\ell}$. The above casework shows that vertices in $M_{n + 4\ell}$ not in $f([n])$ all have degree $2$, and that the map $f$ preserves degree. Hence, the endpoints of any path in $M_n$ are sent to endpoints of paths in $M_{n + 4\ell}$ by $f$, while degree $2$ vertices are sent to degree $2$ vertices.

    Thus, no new connected components are formed when we go from $M_n$ to $M_{n + 4\ell}$, and paths/cycles are unchanged.
    Hence, $\ind(M_n) = \ind(M_{n + 4\ell})$.
\end{proof}

In~\cref{ex:case2}, where $s_1 = (0)$ and $s_2 = (1, -1)$, we have $c_1 = \frac{n + 1}{2}$, and $c_2 = \frac{n + 2}{2}$. This gives $\ell = \half$, so $4\ell = 2$.

\begin{cor}\label{cor:b2}
    For finite $J \subseteq \ZZ$, the generating function $B_2 ^ J$ is rational.
\end{cor}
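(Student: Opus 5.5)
Since $\calS_J$ is finite by \cref{lem:zero-sum-seq}, I will write
\[B_2^J(x,y) = \sum_{(s_1, s_2) \in \calS_J^2} \sum_{n \ge N_0} x^n y^{\ind\left(\frac{C_n(s_1)}{C_n(s_2)}\right)}.\]
This expresses $B_2^J$ as a finite sum, so it suffices to show that each inner series $F_{s_1,s_2}(x,y) = \sum_{n \ge N_0} x^n y^{k_n}$ is rational, where $k_n = \ind\left(\frac{C_n(s_1)}{C_n(s_2)}\right)$. Here I use that for $n \ge N_0$ there is no ambiguity between large and small parts. I also use that the composition $C_n(s)$ is determined by $s$ and $n$, so each pair $(s_1,s_2)$ contributes exactly one seaweed meander of each size $n$.

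Now fix $(s_1,s_2)$ and apply \cref{prop:big-parts-periodic}. By enlarging the threshold, I may choose $N_1 \ge N_0$ beyond which the relevant case of the proposition holds. The terms with $N_0 \le n < N_1$ form a polynomial, so they do not affect rationality.

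In case (i), $k_n = k_{N_1} + (n - N_1)$ for $n \ge N_1$. The tail is then
\[\sum_{n \ge N_1} x^n y^{k_{N_1} + n - N_1} = \frac{x^{N_1} y^{k_{N_1}}}{1 - xy},\]
which is rational.

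In case (ii), $k_{n+N} = k_n$ for all $n \ge N_1$. Grouping the tail by the residue of $n - N_1$ modulo $N$ gives
\[\sum_{n \ge N_1} x^n y^{k_n} = \frac{1}{1 - x^N}\sum_{r=0}^{N-1} x^{N_1 + r} y^{k_{N_1 + r}},\]
which is again rational.

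Summing over the finitely many pairs $(s_1,s_2)$ then shows that $B_2^J$ is rational. The main point needing care is the bookkeeping of thresholds: \cref{prop:big-parts-periodic} holds only for sufficiently large $n$, and the threshold depends on $(s_1,s_2)$. This is harmless, since there are finitely many pairs and discarding finitely many terms changes the series only by a polynomial.
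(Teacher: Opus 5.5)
Your proposal is correct and follows the paper's proof essentially step for step. You split $B_2^J$ into the finitely many series indexed by pairs $(s_1,s_2)\in\calS_J^2$, then apply \cref{prop:big-parts-periodic} to get denominator $1-xy$ in case (i) and $1-x^N$ in case (ii); your explicit tail formulas and the remark about per-pair thresholds make precise what the paper states more briefly.
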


\begin{proof}
    Consider fixed $s_1, s_2 \in \calS_J$. Then define
    \[B_{s_1, s_2}(x, y) = \sum_{n \ge N_0} x ^ n y ^ {\ind\left(\frac{C_n(s_1)}{C_n(s_2)}\right)}.\]

    By~\cref{prop:big-parts-periodic}, there are two possibilities. In case (i), the terms of $B_{s_1, s_2}$ are eventually a geometric progression with common ratio $xy$, in which case $B_{s_1, s_2}$ is rational with denominator $1 - xy$.

    In case (ii), the exponents of $y$ are eventually periodic with period $N$, so $B_{s_1, s_2}$ is rational with denominator $1 - x ^ N$.

    Hence,
    \[B_2 ^ J = \sum_{s_1, s_2 \in \calS_J} B_{s_1, s_2}\] is also rational, since $\calS_J$ is finite.
\end{proof}

\subsection{Final Proof}

We now have all the preliminaries required to prove~\cref{thm:rational} for general $J$.

\begin{proof}[Proof of~\cref{thm:rational}]
    Fix $J$. By~\cref{prop:ab-equiv}, it suffices to show that $B ^ J$ is rational. Furthermore, recall that we have
    \[B ^ J = B_0 ^ J + B_1 ^ J + B_2 ^ J.\]

    By combining~\cref{lem:ab-positive,prop:rational-positive}, we have that $B_0 ^ J$ is rational.
    By~\cref{prop:b1-rational}, $B_1 ^ J$ is rational, and by~\cref{cor:b2}, $B_2 ^ J$ is rational.
    Thus, $B ^ J = B_0 ^ J + B_1 ^ J + B_2 ^ J$ is rational, so $A ^ J$ is rational as well.
\end{proof}

\subsection{Explicit Generating Functions}

We now state the generating functions $A ^ {\{-1, 1, 2\}}$, $A ^ {\{0, 1, 3\}}$, and $A ^ {\{1, 6\}}$, answering a question of Archer, Geary, and Laudone~\cite{archer2025}. The proofs are quite long and mostly consist of computations, so we defer them to~\cref{appendix:bashing}.

\begin{theorem}\label{thm:-112-gen-func}
    We have
    \begin{align*}
        A ^ {\{-1, 1, 2\}} &= \frac{1 - x}{(1 - xy - x ^ 2y ^ 2)(1 - x) - 2x ^ 2y} + \frac{2x(1 + 2y + x ^ 2y - x ^ 4y ^ 2)(1 + xy - x ^ 2 + x ^ 2y ^ 2 + x ^ 3y - x ^ 4y ^ 2)}{1 - x ^ 2 - x ^ 2y - x ^ 4y - x ^ 4y ^ 2 + x ^ 6y ^ 2} \\
        &\phantom{=} - 2x(1 + 2y + xy + 2xy ^ 2 + 2x ^ 2y + 3x ^ 2y ^ 2 + 2x ^ 2y ^ 3) + \frac{2x ^ 4y ^ 4}{1 - xy} + \frac{2(x ^ 4y ^ 2 + x ^ 5y)}{1 - x ^ 2}.
    \end{align*}
\end{theorem}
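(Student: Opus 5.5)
Following the decomposition of Section~\ref{sec:genfunc}, I would write $A^{\{-1,1,2\}} = A^{\{1,2\}}\text{-part} + B_1 + B_2 + (\text{polynomial correction})$ and compute each summand separately. Here $J^+ = \{1,2\}$ and $J^- = \{-1\}$, so $J_n = \{1,2,n-1\}$. The first term of the claimed formula should be the generating function for the case with no large parts, that is, seaweed meanders with parts in $\{1,2\}$. I expect this is known from Archer, Geary, and Laudone. Otherwise I would derive it by decomposing into indecomposable components, which for parts in $\{1,2\}$ are:
\begin{itemize}
    \item $\frac{1}{1}$ and $\frac{2}{2}$;
    \item $\frac{2}{1|1}$ and $\frac{1|1}{2}$;
    \item the zigzag chains $\frac{1|2|\cdots|2|1}{2|\cdots|2}$ and their variants, which are paths.
\end{itemize}
Summing over these gives a denominator of the form $(1-xy-x^2y^2)(1-x)-2x^2y$.

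For the case with exactly one large part, $\calS_J$ consists of the zero-sum sequences with one $-1$ and a single $1$, namely $(1,-1)$ and $(-1,1)$. So $\mu = (1, n-1)$ or $(n-1, 1)$, and $\lambda$ ranges over compositions with parts in $\{1,2\}$. I would build the transfer matrix from the proof of~\cref{prop:b1-rational} by hand. Reading the vertices in the folded order $g(\lambda)$, the large arc pairs vertex $v$ with $n+1-v$ or $n+2-v$ (up to the shift $a,b$). The state records how the current left and right boundary vertices are connected through the folded strip, and since parts are at most $2$ only a few states arise. Solving the resulting linear system should yield the second term, with denominator $1-x^2-x^2y-x^4y-x^4y^2+x^6y^2$; the parity in $x^2$ comes from folding. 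The factor $2$ accounts for the large part lying in $\lambda$ or in $\mu$, and by the reflection symmetry $v \mapsto n+1-v$ the two choices of $s$ should give equal contributions.

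For the case where both compositions have a large part, I would apply~\cref{prop:big-parts-periodic} to each of the four pairs $(s_1,s_2)$:
\begin{itemize}
    \item Equal sequences give equal centers, so case (i) applies. For example, $\frac{1|n-1}{1|n-1}$ has index $n$, contributing terms like $\frac{x^4y^4}{1-xy}$ after truncation.
    \item Unequal sequences have centers differing by $\ell = 1$, so case (ii) applies. Computing directly, $\frac{1|n-1}{n-1|1}$ is a single path or a cycle plus a path depending on parity, giving the terms $\frac{x^4y^2+x^5y}{1-x^2}$.
\end{itemize}

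Finally, I would fix the polynomial correction by brute-force enumeration for small $n$, say $n \le 4$. This is the range where "large" and "small" parts are ambiguous, for instance when $n-1 \in \{1,2\}$, and where a composition can contain two large parts. I expect this subtraction to produce the term $-2x(1+2y+\cdots)$. The main obstacle is the $B_1$ transfer-matrix computation: setting up the folded automaton correctly and tracking cycle creation at the central $\#$ step. I would verify the result by expanding every piece to order about $x^{10}$ and comparing against a direct computer enumeration of all seaweed meanders with parts in $J_n$.
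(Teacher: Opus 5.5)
Your split into no large part, one large part, and two large parts matches the paper's split into its cases (i), (ii)+(iii), and (iv)+(v). Your handling of the first and third pieces agrees with the paper's. Your indecomposable-component derivation of $A^{\{1,2\}}$ is a correct, self-contained substitute for the citation the paper uses.

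The genuine difference is in how you get $B_1$. The paper never builds a folded automaton. It splits $\frac{\lambda}{1\mid n-1}$ on $\lambda_1$:
\begin{itemize}
\item if $\lambda_1=1$, the meander equals $\frac{1}{1}\oplus\frac{\lambda'}{n-1}$;
\item if $\lambda_1=2$, vertex $1$ is a leaf, and contracting it gives $\frac{1\mid\lambda'}{n-1}$.
\end{itemize}
Everything is then expressed through $R(x,y)$, the generating function of $\frac{\lambda}{n}$ with $\lambda\in\calC_n^{\{1,2\}}$. The paper splits $R$ by the first and last parts of $\lambda$. Peeling off the outer path or $2$-cycle gives $S=x^2yR$ and $V=x^4y^2R$, and the symmetry $T=U$ then determines $T$. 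So the whole one-large-part contribution is $2x(1+2y+x^2y-x^4y^2)R$ minus its terms of size at most $3$. Those terms are already counted by $A^{\{1,2\}}$, since $J_n\cap\NN=\{1,2\}$ for $n\le 3$. This truncation is exactly where $-2x(1+2y+\cdots)$ comes from.

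Your automaton route has two advantages: it is closer to the general rationality proof, and it does not depend on the closed form of $R$, which the paper notes is misprinted in the source. It is, however, considerably heavier. As written, your second term is only asserted to come out of the linear system. That computation is the entire content of this piece, so it must actually be carried out; checking it against enumeration is not enough.

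One statement in your two-large-part step is wrong: for even $n$, $\frac{1\mid n-1}{n-1\mid 1}$ is not a cycle plus a path.
\begin{itemize}
\item It has $n$ vertices, $n-2$ edges, and maximum degree $2$, so it has exactly two path components.
\item Contracting the leaf edges $(1,n-1)$ and $(2,n)$ gives $\frac{1\mid n-3}{n-3\mid 1}$, and iterating reaches $\frac{1\mid 1}{1\mid 1}$. So it is two paths, with index $2$.
\end{itemize}
A cycle plus a path would have index $3$ and contribute $x^{2m}y^3$, not the $x^{2m}y^2$ in the term you wrote. The term itself is right, but its justification needs to be fixed.
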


\begin{theorem}\label{thm:013-gen-func}
    We have
    \begin{align*}
        A ^ {\{0, 1, 3\}}(x, y) &= \frac{1 - x ^ 2y - x ^ 3y}{(1 - x ^ 2y - x ^ 3y)(1 - xy - x ^ 3y ^ 3) - 2x ^ 3y ^ 2(1 + x)} \\
        &\phantom{=} + \frac{(2 + 2xy + 2x ^ 3y ^ 3)(1 - x ^ 4y - x ^ 6y) + 4x ^ 4y + 4x ^ 5y ^ 2(1 + x ^ 2)}{(1 - x ^ 4y - x ^ 6y)(1 - x ^ 2y - x ^ 6y ^ 3) - 2x ^ 6y ^ 2(1 + x ^ 2)} \\
        &\phantom{=} -2 - 2xy + x ^ 2y ^ 2 - 2x ^ 3y ^ 2 - 2x ^ 3y ^ 3 + \frac{x ^ 4y ^ 4}{1 - xy}.
    \end{align*}
\end{theorem}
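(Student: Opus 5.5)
The plan is to follow the decomposition used for \cref{thm:rational}, now with $J = \{0, 1, 3\}$, and compute each piece explicitly. Here $J^+ = \{1, 3\}$ and $J^- = \{0\}$, so the only large part is $n$ itself. By \cref{lem:zero-sum-seq} the only $s \in \calS_J$ is $s = (0)$, giving $C_n(s) = (n)$. Thus
\[A^{\{0,1,3\}} = A^{\{1,3\}} + 2\sum_{n}\sum_{\lambda} x^n y^{\ind\left(\frac{\lambda}{n}\right)} + \sum_{n} x^n y^{\ind\left(\frac{n}{n}\right)} - (\text{overcounting corrections for small } n),\]
where $\lambda$ ranges over compositions with parts in $\{1,3\}$. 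For small $n$ (namely $n \le 3$), the parts $0+n$ and $1$ or $3$ coincide, and this produces the polynomial correction $-2 - 2xy + x^2y^2 - \dots$.

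The term $B_2$ is the easy one. Since $\frac{n}{n}$ has index $n$ (nested $2$-cycles plus a possible fixed point), its series is $\sum x^n y^n$. This matches the $\frac{x^4y^4}{1-xy}$ tail once the small-$n$ terms are absorbed into the polynomial.

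For $A^{\{1,3\}}$, I would decompose into indecomposable components. Since $A = 1/(1 - I(x,y))$, where $I$ is the generating function of indecomposables, the first fraction should have the form $1/(1 - I)$ with $I$ rational. I would enumerate the indecomposables with parts in $\{1,3\}$ by hand:
\begin{itemize}
\item $\frac11$ contributes $xy$;
\item $\frac33$ contributes $x^3y^3$;
\item the remaining indecomposables are staggered chains, in which $3$'s on top and bottom interlock with offsets, each end capped by $1$'s.
\end{itemize}
A chain built from $3$'s alternating between the rows, shifted by $1$ or $2$, is naturally described by a transfer matrix with a few states. This should yield the correction $\frac{2x^3y^2(1+x)}{1 - x^2y - x^3y}$, with the factor $2$ coming from top/bottom symmetry. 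Concretely, I would verify that each shift of a $3$-block across the chain adds a single path-connection pattern, so the index weight $y$ per step is as claimed. This requires checking that components in these chains are paths, not cycles, which follows from parity/offset considerations.

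For $B_1$, I would use the symmetry trick from \cref{prop:b1-rational}, folding $\lambda$ at the center, since the large part $n$ pairs $v$ with $n+1-v$. The folded seaweed $\frac{\lambda}{n}$ behaves like a seaweed whose "letters" are pairs of vertices. Reading $g(\lambda)$, which interleaves both ends, turns it into a walk in an automaton on pairs, and this automaton is essentially the $\{1,3\}$ automaton with every step doubled in size. That explains the substitution $x \mapsto x^2$ visible in the second denominator. The numerator encodes the possible central configurations, namely a middle part that straddles the center in different ways; I would enumerate these boundary cases by hand.

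The main obstacle is this $B_1$ computation. It requires correctly classifying the central region and the interaction of paths and cycles when the folding creates cycles. The cleanest route is probably to set up a small transfer matrix explicitly and apply \cref{thm:stanley-gen-func}, then check the result against brute-force enumeration of the coefficients for $n \le 10$.
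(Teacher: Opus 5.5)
Your overall decomposition matches the paper's. The first fraction is $A^{\{1,3\}}$, and your reading of it as $1/(1-I)$ is consistent. The paper, however, simply cites Theorem~3.6 of Archer, Geary, and Laudone~\cite{archer2025}, which is preferable to a chain enumeration you only assert will reproduce the known answer. The terms $x^2y^2+\frac{x^4y^4}{1-xy}$ come from $\frac{n}{n}$, and everything else comes from $\frac{\lambda}{n}$ and $\frac{n}{\lambda}$ by folding along the arcs $(v,n+1-v)$.

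The gap is that this last contribution, the second fraction, is never derived, and it is the real content of the theorem. Noting that its denominator is the first denominator with $x\mapsto x^2$ accounts for the bulk of the fold. The numerator, however, is exactly what ``enumerate the central configurations by hand'' leaves open. Your picture of a simply doubled $\{1,3\}$-automaton is also not accurate at the center.

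Here is what the center actually requires. For $n=2m$ with no part of $\lambda$ crossing the middle, contracting the edges $(v,n+1-v)$ turns $\frac{\alpha|\beta}{n}$ into the honest seaweed meander $\frac{\alpha}{\beta^r}$ of size $m$, giving $2(A^{\{1,3\}}(x^2,y)-1)$. For $n$ odd with a central part $1$, or a part $3$ centered on the middle vertex, you split off an isolated vertex (resp.\ an isolated vertex and a $2$-cycle). This gives $xy$ (resp.\ $x^3y^3$) times the even-case series, up to small-$n$ corrections.

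The problem is a part $3$ sitting off-center over the middle: $\frac{\alpha|3|\beta}{n}$ or its mirror image, with $|\alpha|-|\beta|=1$ for $n$ even and $2$ for $n$ odd. Here the fold leaves a degree-one vertex, and contracting it yields $\frac{\alpha}{\beta^r|1}$ or $\frac{\alpha}{\beta^r|1|1}$. These are meanders with a forced extra $1$ or $1|1$ at the end of one row, and $A^{\{1,3\}}(x^2,y)$ does not count them.

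You need auxiliary series $C$ and $D$ for these cases. They are determined by a coupled recursion obtained from casework on the first part of the top row plus reflection, namely $C=xyA^{\{1,3\}}+xD$ and $D=(xy+x^2y)C$. Then $4x^2C(x^2,y)$ and $4xD(x^2,y)$ give exactly the $4x^4y$ and $4x^5y^2(1+x^2)$ terms of the numerator, while $(2+2xy+2x^3y^3)A^{\{1,3\}}(x^2,y)$ gives the rest.

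An automaton reading $g(\lambda)$ could in principle encode all of this in its terminal $\#$-transitions. Until those states and their cofactors are actually worked out, though, the formula has not been proved.
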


\begin{theorem}\label{thm:16-gen-func}
    We have $A ^ {\{1, 6\}}(x, y) = \frac{f(x, y)}{g(x, y)}$, where
    \begin{align*}
        f(x, y) &= 1 - x ^ 3 - 2x ^ 4y - x ^ 5y ^ 2 - 2x ^ 6 - x ^ 6y ^ 2 + x ^ 8y ^ 2 + 2x ^ 9 + x ^ 9y ^ 2 + x ^ 9y ^ 3 + 4x ^ {10}y + x ^ {10}y ^ 3 + x ^ {11}y ^ 2 \\
        &\phantom{=} + x ^ {11}y ^ 4 + x ^ {12} + 2x ^ {12}y ^ 2 - x ^ {13}y - 2x ^ {14} y ^ 2 - x ^ {15} - 2x ^ {15}y ^ 2 - x ^ {16}y - 2x ^ {16}y ^ 3 + x ^ {17}y ^ 2 - x ^ {17}y ^ 4 \\
        &\phantom{=} - x ^ {18}y ^ 2 + x ^ {19}y + x ^ {21}y ^ 2 - x ^ {22}y + x ^ {22}y ^ 3\\
        \intertext{and}
        g(x, y) &= 1 - xy - x ^ 3 - x ^ 4y + x ^ 5y ^ 2 - 2x ^ 6 - x ^ 6y ^ 2 - x ^ 6y ^ 3 - x ^ 6y ^ 6 + x ^ 7y ^ 3 - x ^ 8y ^ 2 + 2x ^ 9 + x ^ 9y ^ 2 + x ^ 9y ^ 6 \\
        &\phantom{=} + 4x ^ {10}y + x ^ {10}y ^ 4 + 2x ^ {10}y ^ 7 + 3x ^ {11}y ^ 2 + x ^ {11}y ^ 8 + x ^ {12} + 2x ^ {12}y ^ 2 + x ^ {12}y ^ 3 + x ^ {12}y ^ 5 + 2x ^ {12} y ^ 6 + x ^ {12} y ^ 8\\
        &\phantom{=}  - x ^ {14}y ^ 2 - x ^ {14}y ^ 8 - x ^ {15} - 2x ^ {15}y ^ 2 - 2x ^ {15} y ^ 3 - 2x ^ {15} y ^ 6 - x ^ {15} y ^ 8 - x ^ {15} y ^ 9 - 2x ^ {16} y - 2x ^ {16} y ^ 3 - 4x ^ {16} y ^ 7 \\ 
        &\phantom{=} - x ^ {16} y ^ 9 - 2x ^ {17} y ^ 2 - x ^ {17} y ^ 4 - x ^ {17} y ^ 8 - x ^ {17} y ^ {10} - x ^ {18} y ^ 2 + x ^ {18} y ^ 3 - x ^ {18} y ^ 5 - x ^ {18} y ^ 6 - 2x ^ {18} y ^ 8 + x ^ {19} y \\
        &\phantom{=} - x ^ {19} y ^ 3 + x ^ {19}y ^ 7  + x ^ {20} y ^ 2 + 2x ^ {20} y ^ 8 + x ^ {21} y ^ 2 + x ^ {21} y ^ 6 + 2x ^ {21} y ^ 8 - x ^ {22} y + 2x ^ {22} y ^ 3 + x ^ {22} y ^ 7 + 2x ^ {22} y ^ 9 \\
        &\phantom{=} - x ^ {23} y ^ 2 + x ^ {23} y ^ 4 - x ^ {23} y ^ 8 + x ^ {23} y ^ {10} + x ^ {24} y ^ 8 - x ^ {25} y ^ 7 - x ^ {27} y ^ 8 + x ^ {28} y ^ 7 - x ^ {28} y ^ 9.
    \end{align*}
\end{theorem}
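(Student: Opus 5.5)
The plan is to carry out the three-way split $A^J = B_0^J + B_1^J + B_2^J$ (up to a polynomial, by \cref{prop:ab-equiv}) explicitly for $J = \{1,6\}$. Here $J^- = \emptyset$, so only $B_0^J$ survives and $A^{\{1,6\}}$ is exactly the generating function from \cref{prop:rational-positive}. Instead of building the full automaton on encoded strings $f(\lambda/\mu)$, I would use the decomposition into indecomposable components. Since direct sums add both size and index,
\[A^{\{1,6\}}(x,y) = \frac{1}{1 - I(x,y)},\]
where $I(x,y)$ is the bivariate generating function of indecomposable seaweed meanders with parts in $\{1,6\}$. The whole computation thus reduces to finding $I$ as an explicit rational function and then simplifying $1/(1-I)$ to $f/g$.

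To compute $I$, I would read an indecomposable meander left to right and track the offset between the current partial-sum positions of $\lambda$ and $\mu$. With parts in $\{1,6\}$, the offset always lies in $\{-5,\dots,5\}$; indecomposability says it is nonzero except at the start and end. For the index, I would run the transfer-matrix state of \cref{prop:rational-positive}, restricted to this setting. A state records the offset together with the set partition (connectivity) of the at most $5$ ``dangling'' vertices that lie in an incomplete part of the lagging composition. Each part of size $6$ contributes three nested arcs, and each part of size $1$ an isolated half-edge. A transition appends the next part to whichever composition is behind, merges blocks according to the new arcs, and records a weight $y^{i}$ for each component that closes: $i=1$ for a path and $i=2$ for a cycle, detected as in \cref{ex:exec-1} when an arc joins two vertices already in the same block.

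I would first trim the state space by symmetry and reachability, since only partitions that actually arise from nested-arc patterns occur. I would also exploit the top/bottom swap symmetry, which identifies offset $d$ with $-d$. Then \cref{thm:stanley-gen-func} gives $I$ as a cofactor ratio of $I - xA$ for the reduced weighted adjacency matrix, and the stated $f,g$ should follow after clearing denominators in $1/(1-I)$. As a sanity check, I would compare coefficients for $n \le 12$ or so against brute-force enumeration. The low-order terms of $f/g$, such as $1 + xy + x^2y^2 + \cdots$, come from the all-ones compositions and are easy to confirm by hand.

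The main obstacle is bookkeeping rather than ideas. The number of reachable connectivity states when $6$-blocks overlap with offset up to $5$ is sizable, and degree-$28$ polynomials in the final answer mean the determinant computation is heavy. I would attempt it first by a careful hand enumeration of the reachable states. Failing that, I would present the state list and transition table explicitly and verify the symbolic determinant by computer algebra, which is presumably why this computation is deferred to the appendix.
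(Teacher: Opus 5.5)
Your plan is sound, but it takes a different route from the paper's. The paper does not decompose into indecomposables and does not build a connectivity automaton.

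\textbf{The paper's route.} It introduces ten auxiliary families $\frac{\lambda}{\beta \mid \mu}$, with bottom prefixes $\beta$ equal to $1$, $6$, $1|1$, $1|1|1$, $1|2$, $2$, $2|1$, $2|1|1$, $2|2$, $4|1$, and does casework on $\lambda_1 \in \{1,6\}$ for each. Once $\lambda_1$ is consumed, it contracts degree-one vertices and deletes a closed two-vertex path or cycle when one appears. Reflecting top and bottom then gives a smaller meander in one of the same families. For instance, $\frac{6|\lambda'}{1|\mu}$ contracts to $\frac{4|1|\lambda'}{\mu}$, which reflects to $\frac{\mu}{4|1|\lambda'}$. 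The recurrences close up into eleven linear equations in $A, E, F, P, \dots, W$, which are then solved.

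\textbf{What each route buys.} The paper's system is very small, every equation is a hand-checkable combinatorial identity, and the final elimination is light. Yours is the mechanical instantiation of the general automaton from the proof of \cref{prop:rational-positive}. It needs no contraction identities and is guaranteed to close up. The cost is a much larger state space (offsets times connectivity patterns on up to five dangling vertices), so the determinant realistically has to be done by machine. The detour through $A = 1/(1-I)$ is correct but unnecessary: you could count walks from the offset-$0$ state back to itself directly.

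\textbf{Two small repairs.}

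First, your transitions append whole parts, so they do not consume one vertex each. Using $I - xA$ with $x$ as a step counter would therefore count appended parts rather than vertices. Instead, let $P_\lambda, P_\mu$ be the current positions of the two compositions. Weight each transition by $x$ raised to the number of newly uncovered vertices, i.e.\ the increase in $\max(P_\lambda, P_\mu)$, and apply the transfer-matrix theorem with step variable $1$. This is legitimate because a transition of $x$-weight $1$ strictly shrinks the lag, so the zero-weight edges form an acyclic graph. Hence $\mathrm{Id} - A$ is invertible over $\QQ[y][[x]]$. Writing $\mathrm{Id}$ also avoids the clash with your $I$.

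Second, the rational function you obtain need not come out literally as $f/g$. Conclude by cross-multiplying against $f$ and $g$ rather than expecting the same numerator and denominator.
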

\section{Billiards}\label{sec:billiards}

Eu, Fu, and Hsu~\cite[Theorem 1.1(i)]{eu2025} give a connection between meandric systems and billiard trajectories in polygons on square lattices. We prove an analogous result which gives a similar connection between certain seaweed meanders and billiard trajectories in polygons on triangular lattices. We will also show that the restriction to seaweed meanders is necessary, as the natural corresponding statement for meandric systems does not hold.

Take $\lambda, \mu \vDash n$. Note that since all parts of $2\lambda$ and $2\mu$ are even, both sides of the seaweed meander $\frac{2\lambda}{2\mu}$ consist of complete non-crossing arc diagrams, and thus the seaweed meander is a union of cycles.

Under the standard correspondence between non-crossing arc diagrams and Dyck paths (where an arc to the right is an up-step and an arc to the left is a down-step), each side of the seaweed meander corresponds to a Dyck path of length $2n$. 

\begin{definition}\label{def:meander-to-shape}
    Let $M$ be a meandric system with non-crossing arc diagrams above and below the central line. Place the corresponding Dyck paths for the arc diagrams above and below the $x$-axis, and let $\rect(M)$ be the chessboard polygon they bound. Likewise, define $\tri(M)$ to be the corresponding shape in the triangular grid.
\end{definition}

In~\cref{fig:correspondence-ex}, we give an example of the correspondences defined in~\cref{def:meander-to-shape}, which we use as a running example throughout this section.
The first row shows the correspondence for the seaweed meander $\frac{6 \mid 4} {2 \mid 6 \mid 2}$, and the second row shows the correspondence for a meandric system.
\begin{figure}[!ht]
    \begin{center}
        \resizebox{\textwidth}{!}{%
            \begin{tikzpicture}[every node/.style={font=\ttfamily\Huge},every path/.style={draw, very thick},subfig/.style={}]
                \node[subfig, anchor=east] (A) at (0,0){
                    \begin{tikzpicture}
                        \foreach \x in {1,...,10}
                            \draw[fill] (\x, 0) circle (0.15);
                        \foreach \x in {1,2,3}
                        {
                            \draw (\x, 0) arc (180:0:3.5-\x);
                            \draw (\x+2,0) arc (-180:0:3.5-\x);
                        }
                        \draw (7,0) arc (180:0:1.5);
                        \draw (8,0) arc (180:0:0.5);
                        \draw (1,0) arc (-180:0:0.5);
                        \draw (9,0) arc (-180:0:0.5);
                    \end{tikzpicture}
                };
                \node[subfig, anchor=west] (B) at ($(A.east) + (2,0)$){
                    \begin{tikzpicture}
                        \draw (0,0)-- ++(1, 1) -- ++(1, 1) -- ++(1, 1) -- ++(1, -1)-- ++(1, -1) -- ++(1, -1) -- ++(1, 1) -- ++(1, 1) -- ++(1, -1) -- ++(1, -1);
                        \draw (0,0) -- ++(1, -1) -- ++(1, 1) -- ++(1, -1) -- ++(1, -1) -- ++(1, -1) -- ++(1, 1) -- ++(1, 1) -- ++(1, 1) -- ++(1, -1) -- ++(1, 1);
                        \draw[dashed] (0,0)--(10,0);
                        \draw[gray, dotted] (1,1)--(2,0)--(4,2);
                        \draw[gray, dotted] (2,2)--(6,-2);
                        \draw[gray, dotted] (3,-1)--(5,1);
                        \draw[gray, dotted] (4,-2)--(6,0)--(7,-1);
                        \draw[gray, dotted] (7,1)--(8,0)--(9,1);
                        \draw[red, dashed] (0.5,0.5)--(5.5,0.5)--(5.5,-2.5)--(4.5,-2.5)--(4.5,1.5)--(1.5,1.5)--(1.5,-0.5)--(0.5,-0.5)--(0.5,0.5);
                        \draw[blue, dashed] (2.5,2.5)--(3.5,2.5)--(3.5,-1.5)--(6.5,-1.5)--(6.5,0.5)--(9.5,0.5)--(9.5,-0.5)--(8.5,-0.5)--(8.5,1.5)--(7.5,1.5)--(7.5,-0.5)--(2.5,-0.5)--(2.5,2.5);
                    \end{tikzpicture}
                };
                \node[subfig, anchor=west] (C) at ($(B.east) + (2,0)$){
                    \begin{tikzpicture}
                        \draw (0,0)-- ++(60:2) -- ++(60:2) -- ++(60:2) -- ++(-60:2)-- ++(-60:2) -- ++(-60:2) -- ++(60:2) -- ++(60:2) -- ++(-60:2) -- ++(-60:2);
                        \draw (0,0) -- ++(-60:2) -- ++(60:2) -- ++(-60:2) -- ++(-60:2) -- ++(-60:2) -- ++(60:2) -- ++(60:2) -- ++(60:2) -- ++(-60:2) -- ++(60:2);
                        \draw[dashed] (0,0)--(10,0);
                        \draw[red, dashed] (60:1) -- ++(5,0) -- ++(-120:4) -- ++(3,0) -- ++(120:8) -- ++(1,0) -- ++(-120:6) -- ++(1,0) -- ++(120:2);
                        \draw[blue, dashed] (60:3) -- ++(3,0) -- ++(-120:4) -- ++(5,0) -- ++(120:2) -- ++(3,0) -- ++(-120:2) -- ++(1, 0) -- ++(120:4) -- ++(1,0) -- ++(-120:8) -- ++(1, 0) -- ++(120:8);
                        \draw[gray, dotted] (60:2) -- ++(4, 0) -- ++(-120:4) -- ++(4, 0);
                        \draw[gray, dotted] (60:4) -- ++(2, 0) -- ++(-120:4);
                        \draw[gray, dotted] (6,0)++(60:2)-- ++(2,0) -- ++(-120:2) -- ++(120:2);
                        \draw[gray, dotted] (2,0)++(-60:4)-- ++(2,0) -- ++(120:8);
                        \draw[gray, dotted] (60:2)--(2,0);
                        \draw[gray, dotted] (6,0)-- ++(-120:4);
                        \draw[gray, dotted] (6,0)-- ++(-60:2);
                    \end{tikzpicture}
                };
                \node (left) at ($(A.west |- C.south) + (-0.5, -0.5)$) {};
                \node (right) at ($(C.east |- C.south) + (0.5, -0.5)$) {};
                \draw[dashed] (left)--(right);
                \node[subfig, anchor=north] (F) at ($(C.south) + (0, -1)$){
                    \begin{tikzpicture}
                        \draw (0, 0) -- ++(60:2) -- ++(60:2) -- ++(60:2) -- ++(-60:2) -- ++(-60:2) -- ++(60:2) -- ++(-60:2) -- ++(-60:2);
                        \draw (0, 0) -- ++(-60:2) -- ++(-60:2) -- ++(60:2) -- ++(-60:2) -- ++(60:2) -- ++(-60:2) -- ++(60:2) -- ++(60:2);
                        \draw[dashed] (0,0)--(8,0);
                        \draw[orange, dashed] (60:1) -- ++(7, 0) -- ++(-120:4) -- ++(1,0) -- ++(120:8) -- ++(1,0) -- ++(-120:6) -- ++(7,0) -- ++(120:4) -- ++(1,0) -- ++(-120:6) -- ++(1,0) -- ++(120:6) -- ++ (3, 0) -- ++(-120:6) -- ++(1,0) -- ++(120:4);
                        \draw[gray, dotted] (60:2) -- ++ (6,0) -- ++(-120:4) -- ++(120:6) -- ++(2,0) -- ++(-120:6) -- ++(6, 0) -- ++(120:4) -- ++(-120:4) -- ++(120:4);
                    \end{tikzpicture}
                };
                \node[subfig] (D) at (A |- F){
                    \begin{tikzpicture}
                        \foreach \x in {1,...,8}
                            \draw[fill] (\x,0) circle (0.15);
                        \draw (1, 0) arc (180:0:3.5);
                        \draw (2, 0) arc (180:0:1.5);
                        \draw (3, 0) arc (180:0:0.5);
                        \draw (6, 0) arc (180:0:0.5);
                        \draw (1,0) arc (-180:0:3.5);
                        \foreach \x in {1,2,3}
                            \draw (2*\x, 0) arc (-180:0:0.5);
                    \end{tikzpicture}
                };
                \node[subfig] (E) at (B |- F){
                    \begin{tikzpicture}
                        \draw (0, 0) -- ++(1, 1) -- ++(1, 1) -- ++(1, 1) -- ++(1, -1) -- ++ (1, -1) -- ++(1, 1) -- ++ (1, -1) -- ++(1, -1);
                        \draw (0, 0) -- ++(1, -1) -- ++(1, -1) -- ++(1, 1) -- ++(1, -1) -- ++(1, 1) -- ++(1, -1) -- ++(1, 1) -- ++(1, +1);
                        \draw[dashed] (0,0)--(8,0);
                        \draw[gray, dotted] (1,-1)--(4,2);
                        \draw[gray, dotted] (1, 1)--(3,-1)--(5,1)--(7,-1);
                        \draw[gray, dotted] (2, 2)--(5,-1)--(7,1);
                        \draw[red, dashed] (0.5,0.5)--(0.5,-0.5)--(7.5,-0.5)--(7.5,0.5)--(0.5,0.5);
                        \draw[blue, dashed] (1.5,1.5)--(1.5,-1.5)--(2.5,-1.5)--(2.5,2.5)--(3.5,2.5)--(3.5,-1.5)--(4.5,-1.5)--(4.5,1.5)--(1.5,1.5);
                        \draw[green,dashed] (5.5,1.5)--(6.5,1.5)--(6.5,-1.5)--(5.5,-1.5)--(5.5,1.5);
                    \end{tikzpicture}
                };
            \end{tikzpicture}
    }
    \end{center}
    \caption{The correspondences given in~\cref{def:meander-to-shape}. In each row, we have $M$, $\rect(M)$, and $\tri(M)$, from left to right. 
    The lines which form the lattices are drawn with dotted lines. The billiard trajectories in each shape are drawn with dashed lines, with each color representing a different trajectory.}
    \label{fig:correspondence-ex}
\end{figure}

We now state the result by Eu, Fu, and Hsu~\cite{eu2025} giving a correspondence between meandric systems and billiard trajectories.
\begin{theorem}[{\cite[Theorem 1.1(i)]{eu2025}}]\label{thm:rect-traj}
    For any meandric system $M$, the number of connected components of $M$ is equal to the number of billiard trajectories in $\rect(M)$.
\end{theorem}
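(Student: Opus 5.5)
The plan is to build an explicit bijection between the billiard trajectories of $\rect(M)$ and the connected components (cycles) of $M$. We will match \emph{vertical} pieces of trajectories with vertices of $M$, and \emph{horizontal} pieces with arcs of $M$.

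\textbf{Coordinates.} Let $M$ have $2n$ vertices, and let $U$ and $D$ be the Dyck paths of the upper and lower arc diagrams. We draw $U$ above the $x$-axis and $D$ reflected below it, with steps of the form $(1,\pm 1)$. Step $i$ of each path then occupies $x \in [i-1, i]$.
\begin{itemize}
    \item The lattice of $\rect(M)$ is the rotated square lattice whose edges are these diagonal steps.
    \item A laser fired at $45\dg$ to the grid from the midpoint of a boundary step therefore travels horizontally or vertically.
    \item The boundary unit segments are exactly the $2n$ steps of $U$ and the $2n$ steps of $D$.
    \item Since $\rect(M)$ is the region between the graphs of two functions of $x$, it is $x$-monotone.
\end{itemize}

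\textbf{Vertical pieces.} For each $i$, the vertical segment $x = i - \tfrac12$ runs from the midpoint of step $i$ of $U$ to the midpoint of step $i$ of $D$. It lies inside $\rect(M)$ and meets the boundary only at its endpoints. We associate this segment with vertex $i$ of $M$.

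\textbf{Horizontal pieces.} Let $(a,b)$ be an arc of the upper diagram, so step $a$ of $U$ is an up-step and step $b$ is its matching down-step. Their midpoints are at the same height $h + \tfrac12$, where $h \ge 0$ is the height at the start of step $a$.
\begin{itemize}
    \item Because the steps are matched, $U$ stays at height $\ge h+1$ on the open interval between them.
    \item Hence the horizontal segment at height $h+\tfrac12$ from $x = a-\tfrac12$ to $x = b-\tfrac12$ lies strictly inside the polygon.
    \item It does not meet $D$, since $D$ lies below the axis and $h + \tfrac12 > 0$.
    \item So this segment meets the boundary exactly at the midpoints of steps $a$ and $b$.
\end{itemize}
The symmetric statement holds for arcs of the lower diagram.

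\textbf{Reflection rule.} A vertical ray hitting a $45\dg$ step at its midpoint reflects into a horizontal ray. For an up-step of $U$ the reflected ray goes right; for a down-step it goes left. By the previous paragraph, the reflected ray's first boundary hit is the midpoint of the matching step, where it reflects back to vertical and travels to the other path. Applying the same rule below the axis, every trajectory is a closed polygonal loop that alternates between the vertical chord of a vertex and the horizontal chord of an arc incident to it. This is exactly a traversal of a cycle of $M$.

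\textbf{Conclusion.} Every boundary midpoint lies on exactly one such chord-loop, so every trajectory arises this way. Distinct components of $M$ use disjoint sets of vertices, so they give distinct trajectories. Thus trajectories are in bijection with the components of $M$.

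\textbf{Expected main obstacle.} The delicate step is checking that the horizontal chord really lies in the interior and first hits the \emph{matching} step, and not some intermediate boundary point or a lattice vertex. The plan is to prove this via the standard first-return property of matched steps in a Dyck path. A smaller point to verify is that the reflection directions agree with the orientation conventions for $45\dg$ rays. This is a short case check on up-steps versus down-steps in each of $U$ and $D$.
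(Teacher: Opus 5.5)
Your proof is correct and follows essentially the same approach as the paper, which cites Eu, Fu, and Hsu and justifies the statement in one line: the horizontal segments of the trajectories in $\rect(M)$ are the arcs of $M$. Your vertical chords at $x = i - \tfrac12$ play the role of the vertices, and your first-return argument for matched Dyck steps supplies the details the paper leaves implicit.
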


As shown by Eu, Fu, and Hsu~\cite{eu2025}, this follows directly from viewing the horizontal segments of the billiard trajectories in $\rect(M)$ as the arcs of the arc diagrams.

\begin{definition}
    Let the \textit{length} of a billiard trajectory be the number of reflections it takes.
\end{definition}

The same proof as in~\cite{eu2025} gives the following result about the lengths of the billiard trajectories.

\begin{cor}\label{cor:traj-lengths}
    For a meandric system $M$, the lengths of the billiard trajectories in $\rect(M)$ are exactly twice the sizes of their corresponding connected components in $M$.
\end{cor}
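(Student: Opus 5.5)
We prove \cref{cor:traj-lengths} by making explicit the correspondence behind \cref{thm:rect-traj}. Write the meandric system $M$ on vertices $1, \dots, 2n$, and place vertex $i$ at the midpoint of the unit column $[i-1, i]$ of the strip in which $\rect(M)$ sits. The upper Dyck path of $\rect(M)$ is obtained from the upper arc diagram by reading an arc opening as an up-step and an arc closing as a down-step; the lower Dyck path is obtained in the same way from the lower diagram. Consider a trajectory that travels at $45\dg$ inside $\rect(M)$. Following Eu, Fu, and Hsu, we view each maximal horizontal stretch of the trajectory, after the reflections have been unfolded, as one arc.

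The first step is to identify each arc $(a, b)$ of the upper diagram with one leg of a trajectory inside the upper region. The upper region between the Dyck path and the axis splits into diagonal strips. An arc $(a, b)$ corresponds to an up-step in column $a$ and the matching down-step in column $b$. These two steps lie at the same height, and there is a ``tent'' of lattice lines joining them. A billiard ray that leaves the axis at the midpoint of column $a$ travels up to the up-step of column $a$ and reflects there. It then travels horizontally in the unfolded picture, or equivalently bounces between the lattice-aligned boundary, until it hits the down-step of column $b$. From there it returns to the axis at the midpoint of column $b$.

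More concretely, we follow the rotated picture of \cref{fig:correspondence-ex}. In that picture a trajectory segment between two consecutive crossings of the axis uses exactly two reflections, one on the up-step over column $a$ and one on the down-step over column $b$. The lower diagram behaves symmetrically. This is precisely the argument of \cref{thm:rect-traj}, and the additional information we need from it is the count of two reflections per arc.

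Next, a connected component of $M$ is a cycle of size $k$. Such a cycle alternates between upper and lower arcs, so it has exactly $k$ arcs, and traversing it visits each of its $k$ vertices once. The associated trajectory crosses the axis exactly at the midpoints of those $k$ columns. Between consecutive crossings it performs one arc, which contributes $2$ reflections. Summing over the $k$ arcs gives a length of $2k$. Because the bijection of \cref{thm:rect-traj} sends components to trajectories, the lengths are exactly twice the sizes of the components.

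The main obstacle is the first step: one must check that each arc contributes exactly two reflections. In particular, a ray must not hit a corner, and no extra reflections may occur on the axis, since the axis lies interior to the polygon. Both facts follow because the segment from the up-step of $a$ to the down-step of $b$ runs horizontally at half-integer height strictly inside the Dyck region. By the non-crossing property, the Dyck path stays at or above that height between $a$ and $b$, so the horizontal leg never meets the boundary before column $b$. Corners are avoided because every trajectory point lies at half-integer coordinates.
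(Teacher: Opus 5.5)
Your proof is correct and follows the paper's (inherited Eu--Fu--Hsu) route: horizontal legs are the arcs and vertical legs cross the midline at the vertices, so a $k$-vertex cycle has $k$ arcs and $2k$ reflections, matching the ``each vertex repeated twice'' count in the proof of \cref{thm:triangle-billiard}. Delete the remark that the ray ``bounces between the lattice-aligned boundary'' in an ``unfolded picture''. In the rotated picture each horizontal leg is a single straight segment with no intermediate reflections, which is exactly what your final paragraph proves, so that remark contradicts your own argument.
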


We now state our main result on billiard trajectories in triangular shapes.

\begin{theorem}\label{thm:triangle-billiard}
    For a seaweed meander $M = \frac{2\lambda}{2\mu}$, there is a bijection between connected components of $M$ and billiard trajectories in $\tri(M)$. Furthermore, the length of each billiard trajectory is twice the size of its corresponding cycle.
\end{theorem}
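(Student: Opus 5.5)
The plan is to mimic the argument of Eu, Fu, and Hsu behind \cref{thm:rect-traj}. We identify the horizontal segments of billiard trajectories in $\tri(M)$ with the arcs of $M$, and the slanted segments with the vertices of $M$.

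\textbf{Step 1: the shape.} Since $M = \frac{2\lambda}{2\mu}$, the Dyck path for the upper diagram is a concatenation of ``mountains'' of $\lambda_i$ up-steps followed by $\lambda_i$ down-steps. In the triangular grid each mountain is a single upward-pointing equilateral triangle whose base is a segment of the axis. Likewise, the lower boundary bounds downward-pointing equilateral triangles, one for each part of $\mu$. So $\tri(M)$ is a union of upward triangles $T_i$ over consecutive base intervals and downward triangles $T'_j$ under consecutive base intervals. This is exactly where the seaweed hypothesis enters: for a general meandric system a mountain is not a single triangle, and the argument below breaks. Normalize coordinates so that the unit boundary segments have length $1$, vertex $v$ of $M$ sits at position $v - \half$ on the axis, and a triangle for a part $2k$ has side $2k$.

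\textbf{Step 2: local computation.} Fix an upward triangle $T$ with base $[0,2k]$. A horizontal chord at height $(j-\half)\frac{\sqrt3}{2}$ has endpoints at midpoints of unit segments on the two slanted sides. Reflecting the direction $(-1,0)$ in the left side, whose direction is $(\half,\frac{\sqrt3}{2})$, gives the direction $(\half,-\frac{\sqrt3}{2})$, which is parallel to the right side. Following that ray, it meets the axis at $x = 2j-1$ times the appropriate scale, i.e.\ exactly at the position of vertex $j$ of the block. Symmetrically, the right endpoint sends a ray to vertex $2k+1-j$. So the chord at level $j$ together with its two outgoing slanted rays realizes the arc $(j, 2k+1-j)$.

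The slanted ray passes through the axis into the downward triangle containing that vertex. By the mirror-image computation, it travels parallel to one side of that triangle until it hits the other side at the height of the lower arc through that vertex, where it reflects into a horizontal chord. I need to check that the ray meets no other boundary edge in between. This holds because inside a single triangle a line parallel to one side can only hit the other two sides, and the relevant heights are consistent on both sides of the axis.

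\textbf{Step 3: bijection and lengths.} By Step 2, following a trajectory alternately traverses upper arcs and lower arcs of $M$, passing through consecutive vertices of a connected component. Since $M$ is a union of cycles, every trajectory closes up after traversing exactly one cycle of $M$, and every cycle arises this way. Moreover, every midpoint of a boundary unit segment is an endpoint of one of these horizontal chords: the side of a triangle of size $2k$ has $2k$ unit segments, matching the $2k$ chord endpoints on that side. Hence every launched laser lies on one of these closed trajectories, and two launches give the same trajectory exactly when they lie on the same cycle. This gives the bijection. For the lengths, a cycle with $s$ vertices has $s$ arcs, each traversed as one horizontal chord contributing two reflections. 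Crossings of the axis are not reflections, so the trajectory has $2s$ reflections.

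\textbf{Main obstacle.} The main difficulty is the careful verification in Step 2 that the slanted segment leaving an upper chord lands precisely at the lower chord through the same vertex, without hitting a corner or another side of the polygon. This requires fixing coordinates and checking both orientations (left or right side of each triangle). I would first handle the upper triangle explicitly, then obtain the lower one by reflecting across the axis, and finally check that the path between them stays inside the single pair of triangles $T_i \cup T'_j$ sharing that vertex's base point.
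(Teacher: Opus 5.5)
Your outline (horizontal chords $\leftrightarrow$ arcs, slanted segments $\leftrightarrow$ vertices) matches the paper's. However, Step~2, which carries the whole proof, is computed in the wrong polygon and gives the wrong vertex correspondence.

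A part $2k$ of $2\lambda$ is a Dyck mountain with $k$ up-steps and $k$ down-steps. So in $\tri(M)$ its triangle has side $k$: $k$ unit segments on each slanted side, $k$ on the base, and $k$ horizontal chords, one per arc. Your normalization (unit segments of length $1$ and side $2k$) contradicts this. The polygon you actually compute in is $\tri$ of $\frac{4\lambda}{4\mu}$. There your chords $j$ and $2k+1-j$ both realize the arc $(j,2k+1-j)$, and each cycle of $M$ is traced by two trajectories; already for $M=\frac{2}{2}$, the rhombus of side $2$ has two trajectories. So Step~3's bijection would fail in that polygon.

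In the true $\tri(M)$, the $2k$ vertices of a block cannot each get their own crossing point $v-\tfrac12$, because the base has only $k$ unit segments. The ray from the $j$-th left segment crosses the axis at the midpoint of the $j$-th base segment (your ``$x=2j-1$ times the scale''), and that point is shared by two vertices. Hence ``the position of vertex $j$'' and the arc $(j,2k+1-j)$ are not the right identifications.

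The missing idea is to encode a vertex by a pair (midline unit segment, slope) rather than by a point of the axis. For the $i$-th midline segment, the $120^\circ$ line through its midpoint stands for vertex $2i-1$ and the $60^\circ$ line for vertex $2i$. For an upper triangle over the block $[2a+1,2a+2k]$, this puts $2a+1,2a+3,\dots,2a+2k-1$ on the left side and $2a+2k,2a+2k-2,\dots,2a+2$ on the right side (bottom to top). So the chord at level $j$ joins $2a+2j-1$ and $2a+2k-2j+2$, whose sum $4a+2k+1$ is exactly the arc condition.

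Reflection in the axis swaps the two slopes, so the lower triangles are not the plain mirror image of the upper computation you proposed to reflect. In a lower triangle the odd labels sit on the right side and the even ones on the left. Because each label depends only on the midline segment and the slope, a slanted segment automatically carries the same label at both ends. That is exactly what makes it join the upper and lower arcs at the same vertex.

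With this labeling, and $k$ rather than $2k$ segments per side, your Step~3 argument for the bijection and the count of $2s$ reflections goes through.
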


Note that the analogous statement does not hold for meandric systems. In particular, in the bottom row of~\cref{fig:correspondence-ex}, $\rect(M)$ has three billiard trajectories, while $\tri(M)$ has only one.

\begin{proof}
    Suppose $\lambda, \mu \vDash n$, so that $M$ is a seaweed meander of size $2n$. Then the boundary of $\tri(M)$ consists of $4n$ unit segments, with $2n$ above the midline and $2n$ below. The midline consists of $n$ unit segments.

    Now, for each segment on the midline of $\tri(M)$, consider the two lines through its midpoint at angles $60\dg$ and $120\dg$ to the positive $x$-axis. Each line intersects exactly one unit boundary segment of $\tri(M)$ above and below the midline.
    
    For the $i$th segment, counting from the left, label the boundary unit segments which the $60 \dg$ line intersects with $2i$, and label the boundary unit segments intersecting the $120\dg$ line with $2i - 1$.

    An example is given in~\cref{fig:label-ex}, again using the seaweed meander $\frac{6 \mid 4}{2 \mid 6 \mid 2}$.
    \begin{figure}[!ht]
        \begin{center}
            \resizebox{0.45\textwidth}{!}{%
                \begin{tikzpicture}[every path/.style={draw, very thick}, every node/.style={font=\large}]
                    \draw (0,0)-- node[above left] {1} ++(60:2) -- node[above left] {3} ++(60:2) -- node[above left] {5} ++(60:2) -- node[above right] {2} ++(-60:2)-- node[above right] {4} ++(-60:2) -- node[above right] {6} ++(-60:2) -- node[above left] {7} ++(60:2) -- node[above left] {9} ++(60:2) -- node[above right] {8} ++(-60:2) -- node[above right] {10} ++(-60:2);
                    \draw (0,0) -- node[below left] {2} ++(-60:2) -- node[below right] {1} ++(60:2) -- node[below left] {4} ++(-60:2) -- node[below left] {6} ++(-60:2) -- node[below left] {8} ++(-60:2) -- node[below right] {3} ++(60:2) -- node[below right] {5} ++(60:2) -- node[below right] {7} ++(60:2) -- node[below left] {10} ++(-60:2) -- node[below right] {9} ++(60:2);
                    \draw[dashed] (0,0)--(10,0);
                    \draw[red, dashed] (60:1) -- ++(5,0) -- ++(-120:4) -- ++(3,0) -- ++(120:8) -- ++(1,0) -- ++(-120:6) -- ++(1,0) -- ++(120:2);
                    \draw[blue, dashed] (60:3) -- ++(3,0) -- ++(-120:4) -- ++(5,0) -- ++(120:2) -- ++(3,0) -- ++(-120:2) -- ++(1, 0) -- ++(120:4) -- ++(1,0) -- ++(-120:8) -- ++(1, 0) -- ++(120:8);
                    \draw[gray, dotted] (60:2) -- ++(4, 0) -- ++(-120:4) -- ++(4, 0);
                    \draw[gray, dotted] (60:4) -- ++(2, 0) -- ++(-120:4);
                    \draw[gray, dotted] (6,0)++(60:2)-- ++(2,0) -- ++(-120:2) -- ++(120:2);
                    \draw[gray, dotted] (2,0)++(-60:4)-- ++(2,0) -- ++(120:8);
                    \draw[gray, dotted] (60:2)--(2,0);
                    \draw[gray, dotted] (6,0)-- ++(-120:4);
                    \draw[gray, dotted] (6,0)-- ++(-60:2);
                \end{tikzpicture}
        }
        \end{center}
        \caption{Example edge labels for the seaweed meander $\frac{6 \mid 4}{2 \mid 6 \mid 2}$.}
        \label{fig:label-ex}
    \end{figure}

    We claim that the billiard trajectory segments parallel to the midline correspond exactly to the arcs of the seaweed meander. In particular, arcs of $2\lambda$ correspond to horizontal reflections above the midline, and arcs of $2\mu$ correspond to horizontal reflections below the midline.

    We prove this for arcs of $2\lambda$, and the argument for $2\mu$ is essentially the same. Consider a part of $2\lambda$ of size $2m$. Suppose this part corresponds to the interval $[2a + 1, 2a + 2m]$ in $M$. The boundary segments of $\tri(M)$ corresponding to this part then form an equilateral triangle with side length $m$ above the midline.

    Hence, the corresponding labels on the left edge (from bottom to top) are
    \[2a + 1, 2a + 3, \dots, 2a + 2m - 1,\] while the labels on the right edge (from bottom to top) are
    \[2a + 2m, 2a + 2m - 2, \dots, 2a + 2.\]
    Thus, each of the horizontal lines in this equilateral triangle corresponds to a pair of vertices in the interval with sum $4a + 2m + 1$. These are exactly the pairs of vertices in $M$ which are connected by arcs corresponding to the part $2m$.

    Note that each billiard trajectory in $\tri(M)$ alternates between segments parallel to the midline and segments at either a $60\dg$ or $120\dg$ angle to the positive $x$-axis. By construction, each segment not parallel to the midline connects unit boundary segments with the same label. Furthermore, we have already shown that the horizontal segments of the billiard trajectories correspond to the arcs of $M$.

    Now, suppose we follow a billiard trajectory and keep track of the labels on the boundary segments it intersects. Whenever we follow a segment in the trajectory which crosses the midline, we repeat a label. A horizontal segment above the midline corresponds to following an arc for $2\lambda$, and a horizontal segment below the midline corresponds to following an arc for $2\mu$.
    
    Hence, as we traverse the billiard trajectory, the labels on the corresponding boundary segments traverse a cycle in $M$, with each vertex repeated twice to account for the non-horizontal segments in the trajectories.
    Thus, the number of trajectories equals the number of connected components of $M$, and the length of each trajectory is twice the size of the corresponding component.
\end{proof}

As an example, in~\cref{fig:label-ex}, the red trajectory intersects boundary segments with labels
\[1, 1, 6, 6, 5, 5, 2, 2,\] while the corresponding seaweed meander (seen in the first row of~\cref{fig:correspondence-ex}) has a cycle consisting of vertices $1$, $6$, $5$, and $2$, in that order.

Using~\cref{thm:triangle-billiard}, we can also interpret the index of any seaweed meander in terms of the number of billiard trajectories in a triangular shape.

\begin{cor}\label{cor:index-traj}
    Let $M = \frac{\lambda}{\mu}$ be a seaweed meander, and define $M' = \frac{2\lambda}{2\mu}$. Then $\ind(M)$ is equal to the number of billiard trajectories in each of $\rect(M')$ and $\tri(M')$.
\end{cor}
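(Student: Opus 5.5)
The plan is to show that doubling the compositions converts paths and cycles of $M$ into cycles of $M'$ in a controlled way, so that $\ind(M)$ equals the number of connected components of $M' = \frac{2\lambda}{2\mu}$. The billiard statements then follow from earlier results. Since $2\lambda$ and $2\mu$ have only even parts, $M'$ is a meandric system; by \cref{thm:rect-traj} its number of components equals the number of billiard trajectories in $\rect(M')$. By \cref{thm:triangle-billiard} it also equals the number of trajectories in $\tri(M')$. So the real content is the identity
\[\ind\left(\frac{\lambda}{\mu}\right) = \#\{\text{components of } \tfrac{2\lambda}{2\mu}\}.\]

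To prove this identity, I replace each vertex $v$ of $M$ by two vertices $2v-1, 2v$ of $M'$. Consider a part of $\lambda$ occupying $[p+1, p+m]$; the corresponding part of $2\lambda$ occupies $[2p+1, 2p+2m]$. An arc $(p+j, p+m+1-j)$ of $\lambda$ becomes two arcs in $2\lambda$:
\[(2(p+j)-1,\ 2(p+m+1-j)) \quad\text{and}\quad (2(p+j),\ 2(p+m+1-j)-1).\]
These join the "left copy" of one endpoint to the "right copy" of the other, and vice versa. If $m$ is odd, the central isolated vertex $c$ of that part acquires the arc $(2c-1, 2c)$ in $M'$. The same holds for $\mu$.

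Hence $M'$ is a double cover of $M$, with a gluing at each isolated (unmatched) endpoint. A vertex of $M$ that is unmatched on one side is exactly a central vertex of an odd part, or a vertex in a part of size $1$, which is the same phenomenon.

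From this I track the components. A cycle of $M$ lifts to two disjoint cycles of $M'$, since every arc swaps the parity $(2v-1 \leftrightarrow$ odd/even pattern$)$ consistently. The lift consists of two copies, and a consistent "side" labeling should separate them; I will verify this with a parity check. This check is the step I expect to be the main obstacle: one must ensure the lift of a cycle is never a single doubled cycle. I would assign to $2v-1$ label $v \bmod 2$ and to $2v$ label $v+1 \bmod 2$. An arc joins $p+j$ and $p+m+1-j$, whose sum has parity $p+m+1 \cdot$(fixed); I would then check how the label transforms along an arc. Alternatively, I could use planarity: the two lifts of an arc are nested concentric arcs, so the lift of a closed curve is a pair of parallel closed curves, one inside the other.

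A path of $M$, including an isolated vertex, lifts to a single cycle of $M'$. Its two copies are joined at both endpoints by the short arcs $(2c-1, 2c)$. For an isolated vertex $v$, these are the arcs above and below, which together give a $2$-cycle.

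Summing over components gives that $M'$ has $2\cdot\#\text{cycles} + \#\text{paths} = \ind(M)$ components, as desired.
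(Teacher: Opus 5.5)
Your proof is correct in outline but takes a different route from the paper. The paper does not analyze $M'$ vertex by vertex. It quotes Proposition 2.3 of Archer, Geary, and Laudone, namely $\ind\left(\frac{2\lambda}{2\mu}\right) = 2\ind\left(\frac{\lambda}{\mu}\right)$, and notes that $M'$ consists only of cycles, so $\ind(M')$ is twice its number of components. The billiard conclusions then come from \cref{thm:rect-traj,thm:triangle-billiard}, exactly as in your reduction.

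You instead prove the identity $\ind(M) = \#\{\text{components of } M'\}$ directly, by lifting each vertex $v$ to $2v-1$ and $2v$. Your formulas for the lifted arcs $(2v-1, 2w)$ and $(2v, 2w-1)$ are right. So are the gluing arcs $(2c-1, 2c)$ at centers of odd parts, and the path case. The paper's route is a one-line reduction to a citation. Yours is self-contained and gives finer information: a path on $k$ vertices lifts to a single cycle on $2k$ vertices, and a cycle on $k$ vertices lifts to two cycles on $k$ vertices each. Combined with the length statement in \cref{thm:triangle-billiard} (or \cref{cor:traj-lengths}), this identifies the trajectory lengths in terms of the components of $M$.

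The step you flagged needs a different justification from the one you sketched first. Your label ($v \bmod 2$ on $2v-1$, $v+1 \bmod 2$ on $2v$) is not constant along lifted components. For an arc of a part of size $m$, the endpoints satisfy $v + w \equiv m+1 \pmod 2$, so the label is kept for even $m$ but flips for odd $m$. For example, in $\frac{3}{3}$ the lifted $2$-cycle $\{1,6\}$ of $\frac{6}{6}$ carries labels $1$ and $0$.

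The clean argument uses the parity you mentioned just before. Every lifted arc joins an odd position of $M'$ to an even one. Every cycle of $M$ has even length, since each of its vertices has exactly one arc above and one below, so its arcs alternate between $\lambda$ and $\mu$. Hence the lift of a cycle $v_1, \dots, v_k$ starting at $2v_1 - 1$ visits only $2v_i - 1$ for odd $i$ and $2v_i$ for even $i$. It returns to $2v_1 - 1$ after $k$ steps and never meets $2v_1$, so the two lifts are disjoint.

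Your planarity alternative also works: the lifts form the two boundary curves of a thin annular neighborhood of the cycle. It needs a little topology to make rigorous, so the parity argument is the shorter fix.
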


\begin{proof}
    By~\cite[Proposition 2.3]{archer2025}, we have
    \[\ind(M') = \ind\left(\frac{2\lambda}{2\mu}\right) = 2\ind\left(\frac{\lambda}{\mu}\right) = 2\ind(M).\]
    Note that $M'$ consists of only cycles, so $\ind(M')$ is twice the number of connected components of $M'$.

    Thus, $\ind(M)$ is equal to the number of connected components in $M'$.
    By \cref{thm:rect-traj,thm:triangle-billiard} this is equal to the number of billiard trajectories in each of $\rect(M')$ and $\tri(M')$.
\end{proof}
\section{Further Questions}\label{sec:questions}

We pose a number of questions extending the above results.

\subsection{Central Limit Theorems}

There are several directions in which we may strengthen the probabilistic results given in~\cref{sec:distrib}.
\begin{question}
    Are there closed-form expressions for the constants $a$ and $b$ in~\cref{thm:clt-seaweed}?
\end{question}

As mentioned after the proof of~\cref{thm:clt-seaweed}, we have the estimates $a \approx 0.42$ and $b \approx 0.49$.

Recall that in the proof of~\cref{thm:clt-seaweed}, we give formulas for $a$ and $b$ in terms of the probability distribution $\rho$ on the set of indecomposable components. However, it seems more difficult to understand how the index behaves under this distribution. In particular, by~\cref{cor:more-clt} similar central limit theorems hold for other statistics, which suggests that there may not be any special property of the index which can be used to compute closed forms.

For another perspective on the central limit theorem of~\cref{thm:clt-seaweed}, note that both sides of a seaweed meander of size $n$ correspond to an involution in $S_n$. Furthermore, as shown by Dergachev and Kirillov~\cite[Theorem 3.3]{Dergachev2000}, the number of cycles in the product of these involutions is equal to the index of the seaweed meander.

Similarly, as shown by Karnauhova and Liebscher~\cite[Proposition 2.2]{karnauhova2015}, if we take the involutions corresponding to each side of a meandric system, the number of cycles in their product is equal to twice the number of cycles of the meandric system.

\begin{definition}
    A \textit{layered permutation} in $S_n$ is a permutation formed by partitioning $[n]$ into contiguous blocks and reversing each block (that is, concatenating the decreasing permutations on each block). Equivalently, they are exactly the permutations which may be realized by the arcs on one half of a seaweed meander.
\end{definition}

Under the above interpretation,~\cref{thm:clt-seaweed} gives a central limit theorem for the number of cycles in the product of two uniformly random layered permutations, while the conjecture of Kargin in~\cite{Kargin_2020} for meandric systems implies a central limit theorem for the number of cycles in the product of two non-crossing fixed-point-free involutions.

A classical result of Goncharov~\cite{goncharov2} gives that the number of cycles in a uniformly random permutation of $S_n$ is asymptotically Gaussian, and hence the same holds for the product of two uniformly random permutations. Lugo~\cite{lugo2009} also proves asymptotic normality in the case of fixed-point-free involutions.

In light of these results, we have the following natural generalization.
\begin{question}
    Are there other natural families of subsets $\Gamma_n \subseteq S_n$ such that the number of cycles in the product of two uniformly random elements of $\Gamma_n$ is asymptotically Gaussian as $n \to \infty$?
\end{question}

\subsection{Enumerative Questions}
We ask the following question generalizing~\cref{thm:rational}.

\begin{question}
    For which infinite sets $J$ is $A ^ J(x, y)$ a rational function? In particular, is the unrestricted generating function $A ^ \NN(x, y)$ rational?
\end{question}

We conjecture that $A ^ J$ is not rational for any infinite $J$. For one heuristic, note that the arguments used to show $A ^ J$ is rational for finite $J$ do not generalize directly, since it is no longer true that only finite memory is required.

Likewise, some natural methods of reducing the size of seaweed meanders such as the \textit{winding-down} procedure given by Coll, Magnant, and Wang~\cite{coll2012signaturemeander} do not give rationality, since it is not enough to only track the first terms of $\lambda$ and $\mu$ in order to derive a recurrence.

We also have the following conjecture regarding the enumeration of seaweed meanders, which we have verified for $n \le 50$.
\begin{conj}
    Let $a_{n, k} = a_{n, k} ^ \NN$ be the number of seaweed meanders of size $n$ and index $k$. Then for any fixed $n$, the sequence $(a_{n, k})_{k = 1} ^ n$ is log-concave, that is,
    \[a_{n, k} ^ 2 \ge a_{n, k - 1} a_{n, k + 1}\]
    for all $1 < k < n$.
\end{conj}

Finally, it may be interesting to extend the results of~\cref{thm:triangle-billiard}. By combining~\cref{thm:rect-traj,thm:triangle-billiard} we see that for any seaweed meander $M = \frac{2\lambda}{2\mu}$, the billiard trajectories of $\rect(M)$ and $\tri(M)$ have a natural correspondence preserving the lengths of the trajectories.

\begin{question}
    Are there other families of chessboard polygons with corresponding triangular shapes such that the billiard trajectories in the square and triangular shapes have a natural correspondence?
\end{question}
\section{Acknowledgements}

This research was conducted at the University of Minnesota Duluth REU, which was funded by Jane Street Capital, Axiom Math AI, and a donation from Larry Penn. I am grateful to Joe Gallian and Colin Defant for organizing the REU and providing this opportunity.

I would like to thank Eliot Hodges, Claire Kaneshiro, Noah Kravitz, and Carl Schildkraut for their guidance and advice throughout the research process. I would also like to thank Mitchell Lee for helpful discussions, and Kerry Seekamp and Natalya Ter-Saakov for their feedback during the writing process.

\subsection{AI Usage Statement}

The major mathematical steps in this paper were done by the author alone, without AI assistance. 
LLM tools were used for literature search and programming, primarily the estimation of the constants $a$ and $b$ in~\cref{thm:clt-seaweed}. LLMs were also used for proofreading and editing of an initial draft of this paper. In particular, a minor error in the original proof of~\cref{thm:equidistrib} was identified at this stage.
\appendix
\crefalias{section}{appendix}
\section{Explicit Computations of Generating Functions}\label{appendix:bashing}

We answer a question asked by Archer, Geary, and Laudone~\cite{archer2025} by computing the generating function $A ^ J(x, y)$ for the sets $J = \{-1, 1, 2\}$, $J = \{0, 1, 3\}$, and  $J = \{1, 6\}$.
In the first two cases, we relate the generating functions to $A ^ {\{1, 2\}}$ and $A ^ {\{1, 3\}}$, respectively, while in the final case we compute a recursion using methods similar to those in~\cref{sec:genfunc}.

\begin{definition}
    For $J \subseteq \ZZ$, let $\calC_n ^ J$ denote the set of compositions of $n$ with all parts in $J_n$.
\end{definition}

\subsection{The case $J = \{-1, 1, 2\}$}

We first compute the generating function $A ^ {\{-1, 1, 2\}}$.

\begin{proof}[Proof of~\cref{thm:-112-gen-func}]
    We divide the seaweed meanders enumerated by $A ^ {\{-1, 1, 2\}}$ into the following disjoint cases, and compute the (multivariate) generating function for each:
    \begin{enumerate}[label=(\roman*)]
        \item $\frac{\lambda}{\mu}$, where $\lambda, \mu \in \calC_n ^ {\{1, 2\}}$;
        \item $\frac{1 | \lambda}{1 | n - 1}$, $\frac{\lambda | 1}{n - 1 | 1}$, $\frac{1 | n - 1}{1 | \lambda}$, or $\frac{n - 1 | 1}{\lambda | 1}$, where $\lambda \in \calC_{n - 1} ^ {\{1, 2\}}$ and $n \ge 4$;
        \item $\frac{2 | \lambda}{1 | n - 1}$, $\frac{\lambda | 2}{n - 1 | 1}$, $\frac{1 | n - 1}{2 | \lambda}$, or $\frac{n - 1 | 1}{\lambda | 2}$, where $\lambda \in \calC_{n - 2} ^ {\{1, 2\}}$ and $n \ge 4$;
        \item $\frac{1 | n - 1}{1 | n - 1}$ or $\frac{n - 1 | 1}{n - 1 | 1}$, where $n \ge 4$;
        \item $\frac{1 | n - 1}{n - 1 | 1}$ or $\frac{n - 1 | 1}{1 | n - 1}$, where $n \ge 4$.
    \end{enumerate}
    Note that each case consists of one family of seaweed meanders, up to reflections.

    \textbf{Case (i):} These seaweed meanders are exactly those where the parts of $\lambda$ and $\mu$ are restricted to $\{1, 2\}$, so the contribution is $A ^ {\{1, 2\}}(x, y)$.

    \textbf{Case (ii):} By symmetry, each of these four subcases has the same contribution to the total. Without loss of generality, we restrict our attention to the first case, of $\frac{1 | \lambda}{1 | n - 1}$, where $\lambda \in \calC_{n - 1} ^ {\{1, 2\}}$.
    Note that we have
    \[\frac{1 | \lambda}{1 | n - 1} = \frac{1}{1} \oplus \frac{\lambda}{n - 1},\] so
    \[\ind\left(\frac{1 | \lambda}{1 | n - 1}\right) = \ind\left(\frac{1}{1}\right) + \ind\left(\frac{\lambda}{n - 1}\right) = 1 + \ind\left(\frac{\lambda}{n - 1}\right).\]

    Let $r_{n, k}$ denote the number of seaweed meanders of the form $\frac{\lambda}{n}$ with index $k$, where $\lambda \in \calC_n ^ {\{1, 2\}}$, and let
    \[R(x, y) = \sum_{n, k \ge 0} r_{n, k} x ^ n y ^ k.\]
    The function $R(x, y)$ is computed in the proof of~\cite[Theorem 5.2]{archer2025}.

    The number of seaweed meanders of the form $\frac{1 | \lambda}{1 | n - 1}$ with index $k$ is then $r_{n - 1, k - 1}$. The leading terms of $R(x, y)$ (with degree in $x$ at most $2$) are $1 + xy + x ^ 2 y + x ^ 2y ^ 2$. We restrict to the case $n \ge 4$, so we must remove terms with degree in $x$ at most $2$. Thus, the generating function for all seaweed meanders enumerated by this case is
    \[4xy \left(R(x, y) - 1 - xy - x ^ 2y - x ^ 2 y ^ 2\right).\]

    \textbf{Case (iii):} Again by symmetry, it suffices to consider the case $\frac{2 | \lambda}{1 | n - 1}$. Note that the first vertex is connected to only the second vertex, and thus is the endpoint of a path. Contracting the edge between the first and second vertices, we have
    \[\ind\left(\frac{2 | \lambda}{1 | n - 1}\right) = \ind\left(\frac{1 | \lambda}{n - 1}\right).\]

    Now, we further refine the seaweed meanders enumerated by $r_{n, k}$ into several classes. We define the following quantities.
    \begin{itemize}
        \item Let $s_{n, k}$ be the number of seaweed meanders of size $n$ and index $k$ of the form $\frac{1 | \lambda | 1}{n}$ with $\lambda \in \calC_{n - 2} ^ {\{1, 2\}}$.
        \item Let $t_{n, k}$ be the number of seaweed meanders of size $n$ and index $k$ of the form $\frac{1 | \lambda | 2}{n}$ with $\lambda \in \calC_{n - 3} ^ {\{1, 2\}}$.
        \item Let $u_{n, k}$ be the number of seaweed meanders of size $n$ and index $k$ of the form $\frac{2 | \lambda | 1}{n}$ with $\lambda \in \calC_{n - 3} ^ {\{1, 2\}}$.
        \item Let $v_{n, k}$ be the number of seaweed meanders of size $n$ and index $k$ of the form $\frac{2 | \lambda | 2}{n}$ with $\lambda \in \calC_{n - 4} ^ {\{1, 2\}}$.
    \end{itemize}
    We define the multivariate generating functions $S(x, y)$, $T(x, y)$, $U(x, y)$, and $V(x, y)$ in the same way as for $R(x, y)$.
    By symmetry, we have $t_{n, k} = u_{n, k}$.

    By the above equality, for $n \ge 4$, the number of seaweed meanders of the form $\frac{2 | \lambda}{1 | n - 1}$ with index $k$ is then
    \[s_{n - 1, k} + t_{n - 1, k}.\]
    The only seaweed meander which is counted by $S(x, y)$ or $T(x, y)$ but has size less than $3$ is $\frac{1 | 1}{2}$, which has index $1$.

    Thus, the generating function for seaweed meanders of the form $\frac{2 | \lambda}{1 | n - 1}$ with $\lambda \in \calC_{n - 2} ^ {\{1, 2\}}$ is
    \[x(S(x, y) + T(x, y) - x ^ 2y).\]

    We now compute this quantity.
    For $n = 1$ and $n = 2$, the seaweed meanders $\frac{1}{1}$ and $\frac{2}{2}$ (respectively) are counted by $r_{n, k}$ but not by any of the four subclasses.
    For $n \ge 3$, we have
    \[r_{n, k} = s_{n, k} + t_{n, k} + u_{n, k} + v_{n, k}.\] 
    Thus,
    \[R(x, y) = S(x, y) + T(x, y) + U(x, y) + V(x, y) + 1 + xy + x ^ 2y ^ 2.\]
    In the seaweed meander $\frac{1 | \lambda | 1}{n}$, note that there is a path which consists of the first and last vertices. Thus, we have
    \[\ind\left(\frac{1 | \lambda | 1}{n}\right) = \ind\left(\frac{\lambda}{n - 2}\right) + 1,\] so $s_{n, k} = r_{n - 2, k - 1}$.
    Likewise, in the seaweed meander $\frac{2 | \lambda | 2}{n}$, there is a cycle consisting of the first two vertices and the last two vertices, so we have
    \[\ind\left(\frac{2 | \lambda | 2}{n}\right) = \ind\left(\frac{\lambda}{n - 4}\right) + 2\] and $v_{n, k} = r_{n - 4, k - 2}$.
    Hence, we have $S(x, y) = x ^ 2yR(x, y)$ and $V(x, y) = x ^ 4y ^ 2R(x, y)$.
    Thus,
    \[R(x, y) = (x ^ 2y + x ^ 4y ^ 2)R(x, y) + 2T(x, y) + 1 + xy + x ^ 2y ^ 2,\] so
    \[T(x, y) = \half \left((1 - x ^ 2y - x ^ 4y ^ 2)R(x, y) - 1 - xy - x ^ 2y ^ 2\right).\]
    As shown above, the desired generating function for this case then is equal to
    \[4x(S(x, y) + T(x, y) - x ^ 2y) = 2x \left((1 + x ^ 2y - x ^ 4y ^ 2)R(x, y) - 1 - xy - x ^ 2y ^ 2\right) - 4x ^ 3y.\]

    \textbf{Case (iv):} Note that we have
    \[\ind\left(\frac{1 | n - 1}{1 | n - 1}\right) = \ind\left(\frac{n - 1 | 1}{n - 1 | 1}\right) = n.\]
    Thus, since we sum over only $n \ge 4$, the generating function for this case is
    \[\frac{2x ^ 4y ^ 4}{1 - xy}.\]

    \textbf{Case (v):} By symmetry, it suffices to consider the seaweed meander $\frac{1 | n - 1}{n - 1 | 1}$.
    We claim that for $n \ge 4$,
    \[\ind\left(\frac{1 | n - 1}{n - 1 | 1}\right) = \begin{cases}
        1 & n \text{ odd}, \\
        2 & n \text{ even}.
    \end{cases}\]
    We use a similar argument as in the proof of~\cref{prop:big-parts-periodic}.

    Take $n \ge 4$. In the seaweed meander $\frac{1 | n - 1}{n - 1 | 1}$, note that the first and last vertices have degree $1$, and are connected to the $(n - 1)$th and $2$nd vertices, respectively. Contracting the edges $(1, n - 1)$ and $(2, n)$, we obtain the seaweed meander $\frac{1 | n - 3}{n - 3 | 1}$. In particular, we have
    \[\ind\left(\frac{1 | n - 1}{n - 1 | 1}\right) = \ind\left(\frac{1 | n - 3}{n - 3 | 1}\right).\]
    Thus, for odd $n$, we have
    \[\ind\left(\frac{1 | n - 1}{n - 1 | 1}\right) = \ind\left(\frac{1 | 2}{2 | 1}\right) = 1,\] and for even $n$, we have
    \[\ind\left(\frac{1 | n - 1}{n - 1 | 1}\right) = \ind\left(\frac{1 | 1}{1 | 1}\right) = 2.\]
    Hence, the generating function for this case is
    \[2\sum_{m \ge 2} (x ^ {2m}y ^ 2 + x ^ {2m + 1}y) = \frac{2(x ^ 4y ^ 2 + x ^ 5y)}{1 - x ^ 2}.\]

    Putting all these together, we have
    \begin{align*}
        A ^ {\{-1, 1, 2\}}(x, y) &= A ^ {\{1, 2\}}(x, y) + 2x(1 + 2y + x ^ 2y - x ^ 4y ^ 2)R(x, y) \\
        &\phantom{=} - 2x(1 + 2y + xy + 2xy ^ 2 + 2x ^ 2y + 3x ^ 2y ^ 2 + 2x ^ 2y ^ 3) + \frac{2x ^ 4y ^ 4}{1 - xy} + \frac{2(x ^ 4y ^ 2 + x ^ 5y)}{1 - x ^ 2}.
    \end{align*}
    Substituting in the values of $A ^ {\{1, 2\}}$ and $R(x, y)$ using the computations from~\cite[Theorems 3.2 and 5.2]{archer2025}\footnote{The rational function $R(x, y)$ in~\cite[Theorem 5.2]{archer2025} is incorrect as written, although the recurrence is correct. The correct formula is
    \[R(x, y) = \frac{1 + xy - x ^ 2 + x ^ 2y ^ 2 + x ^ 3y - x ^ 4y ^ 2}{1 - x ^ 2 - x ^ 2y - x ^ 4y - x ^ 4y ^ 2 + x ^ 6y ^ 2}.\]}gives the claimed formula.
\end{proof}

\subsection{The case $J = \{0, 1, 3\}$}

Next, we compute the generating function $A ^ {\{0, 1, 3\}}$. The proof follows roughly the same structure as the previous computation, where we divide the enumerated seaweed meanders into several cases, and compute the generating function for each case separately.

We first define and compute auxiliary generating functions which we will use in the proof.
\begin{itemize}
    \item Let $c_{n, k}$ be the number of seaweed meanders of size $n$ and index $k$ of the form $\frac{\lambda}{1 | \mu}$ with $\lambda \in \calC_n ^ {\{1, 3\}}$ and $\mu \in \calC_{n - 1} ^ {\{1, 3\}}$.
    \item Let $d_{n, k}$ be the number of seaweed meanders of size $n$ and index $k$ of the form $\frac{\lambda}{1 | 1 | \mu}$ with $\lambda \in \calC_n ^ {\{1, 3\}}$ and $\mu \in \calC_{n - 2} ^ {\{1, 3\}}$.
\end{itemize}
We also define the multivariate generating functions $C(x, y)$ and $D(x, y)$ in the same way as $A ^ J(x, y)$.

\begin{lemma}\label{lem:cd-gen-func}
    We have
    \[C(x, y) = \frac{xy}{1 - x ^ 2y - x ^ 3y} A ^ {\{1, 3\}}(x, y)\]
    and 
    \[D(x, y) = \frac{x ^ 2y ^ 2(1 + x)}{1 - x ^ 2y - x ^ 3y} A ^ {\{1, 3\}}(x, y).\]
\end{lemma}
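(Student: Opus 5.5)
The plan is to derive linear relations among $C$, $D$, and $A^{\{1,3\}}$ by conditioning on the first part of $\lambda$. Two facts from earlier in the paper will be used throughout. First, the direct sum adds indices. Second, contracting an edge at a degree-one vertex (an endpoint of a path) leaves the index unchanged. I will also use the top/bottom symmetry: reflecting $\frac{\lambda}{\mu}$ to $\frac{\mu}{\lambda}$ preserves the index. Hence the generating function of meanders of the form $\frac{1\mid\lambda}{\mu}$ with $\lambda \in \calC_{n-1}^{\{1,3\}}$ and $\mu \in \calC_n^{\{1,3\}}$ is again $C(x,y)$.

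\textbf{Relation for $C$.} Consider $\frac{\lambda}{1\mid\mu}$ and split according to the first part of $\lambda$.
\begin{itemize}
\item If $\lambda = 1\mid\lambda'$, the meander is $\frac{1}{1}\oplus\frac{\lambda'}{\mu}$. This contributes $xy\,A^{\{1,3\}}$.
\item If $\lambda = 3\mid\lambda'$ and $\mu = 1\mid\mu''$, look at vertices $1,2,3$. Vertex $2$ is isolated (index $1$). Vertex $1$ has degree one and is joined to vertex $3$. Contracting the edge $(1,3)$ and deleting vertex $2$ leaves $\frac{1\mid\lambda'}{\mu''}$ on $n-2$ vertices, which is of $C$-type up to reflection. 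This contributes $x^2y\,C$.
\item If $\lambda = 3\mid\lambda'$ and $\mu = 3\mid\mu''$ (written as $1\mid 3\mid\mu''$ in the original $\mu$), look at vertices $1,\dots,4$. The set $\{1,3\}$ forms a path component, since vertex $3$ is the center of the lower part of size $3$ (index $1$). Vertex $2$ is an endpoint joined only to vertex $4$. Contracting $(2,4)$ and deleting vertices $1,2,3$ leaves $\frac{\lambda'}{1\mid\mu''}$ of size $n-3$. This contributes $x^3y\,C$.
\end{itemize}
Together these give $C = xyA^{\{1,3\}} + (x^2y+x^3y)C$, which is the first formula.

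\textbf{Relation for $D$.} Split $\frac{\lambda}{1\mid1\mid\mu}$ the same way.
\begin{itemize}
\item If $\lambda$ begins with $1$, split off $\frac11$. The remainder $\frac{\lambda'}{1\mid\mu}$ is of $C$-type, giving $xy\,C$.
\item If $\lambda$ begins with $3$, the same contraction as in the second case above gives $\frac{1\mid\lambda'}{\mu}$ on $n-2$ vertices with index increased by $1$. This gives $x^2y\,C$.
\end{itemize}
So $D = xy(1+x)C$, and substituting the formula for $C$ yields the second formula.

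The main obstacle is bookkeeping rather than ideas. I need to check that each contraction produces a meander whose parts still lie in $\{1,3\}$, with exactly one leading part of size $1$ on the correct side. I also need to check that small sizes (e.g.\ $n\le 3$, where $\lambda'$ or $\mu''$ may be empty) are consistent with the empty-meander convention, so that no polynomial correction terms are needed. I would verify this by checking the first few coefficients directly.
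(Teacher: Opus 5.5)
Your proof is correct and uses essentially the paper's approach: casework on the first part of $\lambda$, contraction at degree-one vertices, and top--bottom reflection. The only difference is that in the $\lambda_1 = 3$ case for $C$ you split further on the next bottom part. This inlines the paper's $D$-recurrence and gives $C = xyA^{\{1,3\}} + (x^2y + x^3y)C$ directly, whereas the paper writes $C = xyA^{\{1,3\}} + xD$ and $D = (xy + x^2y)C$ and solves that system.
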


\begin{proof}
    We derive recurrences by doing casework on the first part of $\lambda$. Let $\lambda'$ be $\lambda$ with the first part removed. Note that $\lambda'$ has all parts equal to $1$ or $3$.

    First, consider a seaweed meander of the form $\frac{\lambda}{1 | \mu}$. If $\lambda_1 = 1$, then
    \[\ind\left(\frac{\lambda}{1 | \mu}\right) = \ind\left(\frac{1}{1} \oplus \frac{\lambda'}{\mu}\right) = 1 + \ind\left(\frac{\lambda'}{\mu}\right).\]
    If $\lambda_1 = 3$, then vertex $1$ has degree $1$, and its only neighbor is vertex $3$. Contracting the edge $(1, 3)$, we obtain the seaweed meander $\frac{1 | 1 | \lambda'}{\mu}$. Thus,
    \[\ind\left(\frac{\lambda}{1 | \mu}\right) = \ind\left(\frac{1 | 1 | \lambda'}{\mu}\right) = \ind\left(\frac{\mu}{1 | 1 | \lambda'}\right),\] where the last equality follows from reflection over the midline.
    Hence,
    \[c_{n, k} = a_{n - 1, k - 1} ^ {\{1, 3\}} + d_{n - 1, k}.\]

    Now, consider a seaweed meander of the form $\frac{\lambda}{1 | 1 | \mu}$. If $\lambda_1 = 1$, then
    \[\ind\left(\frac{\lambda}{1 | 1| \mu}\right) = \ind\left(\frac{1}{1} \oplus \frac{\lambda'}{1 | \mu}\right) = 1 + \ind\left(\frac{\lambda'}{1 | \mu}\right).\]
    If $\lambda_1 = 3$, by the same argument as above,
    \[\ind\left(\frac{\lambda}{1 | 1 | \mu}\right) = \ind\left(\frac{1 | \mu}{1 | 1 | \lambda'}\right) = 1 + \ind\left(\frac{\mu}{1 | \lambda'}\right).\]
    Hence,
    \[d_{n, k} = c_{n - 1, k - 1} + c_{n - 2, k - 1}.\]
    Translating to the generating functions, we have
    \begin{align*}
        C(x, y) &= xyA ^ {\{1, 3\}}(x, y) + xD(x, y), \\
        D(x, y) &= (xy + x ^ 2y) C(x, y).
    \end{align*}
    Solving this system gives the claimed result.
\end{proof}

\begin{definition}
    For a composition $\lambda = (\lambda_1, \dots, \lambda_m) \Vdash n$, let $\lambda ^ r = (\lambda_m, \dots, \lambda_1) \Vdash n$ be the composition obtained by reversing $\lambda$.
\end{definition}

\begin{proof}[Proof of~\cref{thm:013-gen-func}]
    We divide the seaweed meanders enumerated by $A ^ {\{0, 1, 3\}}$ into the following disjoint cases:
    \begin{enumerate}[label=(\roman*)]
        \item $\frac{\lambda}{\mu}$, where $\lambda, \mu \in \calC_n ^ {\{1, 3\}}$;
        \item $\frac{\alpha | \beta}{n}$ or $\frac{n}{\alpha | \beta}$, where $\alpha, \beta \in \calC_{\frac{n}{2}} ^ {\{1, 3\}}$ and $n \ge 2$ is even;
        \item $\frac{\alpha | 3 | \beta}{n}$, $\frac{\beta | 3 | \alpha}{n}$, $\frac{n}{\alpha | 3 | \beta}$, or $\frac{n}{\beta | 3 | \alpha}$, where $\alpha \in \calC_{\frac{n}{2} - 1} ^ {\{1, 3\}}$, $\beta \in \calC_{\frac{n}{2} - 2} ^ {\{1, 3\}}$, and $n \ge 4$ is even;
        \item $\frac{\alpha | 1 | \beta}{n}$ or $\frac{n}{\alpha | 1 | \beta}$, where $\alpha, \beta \in \calC_{\frac{n - 1}{2}} ^ {\{1, 3\}}$ and $n \ge 5$ is odd;
        \item $\frac{\alpha | 3 | \beta}{n}$ or $\frac{n}{\alpha | 3 | \beta}$, where $\alpha, \beta \in \calC_{\frac{n - 3}{2}} ^ {\{1, 3\}}$ and $n \ge 5$ is odd;
        \item $\frac{\alpha | 3 | \beta}{n}$, $\frac{\beta | 3 | \alpha}{n}$, $\frac{n}{\alpha | 3 | \beta}$, or $\frac{n}{\beta | 3 | \alpha}$, where $\alpha \in \calC_{\frac{n - 1}{2}} ^ {\{1, 3\}}$, $\beta \in \calC_{\frac{n - 5}{2}} ^ {\{1, 3\}}$, and $n \ge 5$ is odd;
        \item $\frac{n}{n}$, where $n \ne 0, 1, 3$.
    \end{enumerate}
    To motivate this partition into cases, note that the arc diagram corresponding to $n$ gives edges of the form $(v, n + 1 - v)$ for $1 \le v \le \floor{\frac{n}{2}}$.
    Di Francesco, Golinelli, and Guitter~\cite{francesco_meanders_1997} define the notion of \textit{semi-meanders} and describe a process which corresponds to contracting each of the edges $(v, n + 1 - v)$.
    In particular, when we contract all of these edges, we obtain a graph which is similar to a seaweed meander on $\ceil{\frac{n}{2}}$ vertices. The behavior of the contraction is determined by the parts containing the vertices around $\frac{n}{2}$, which is exactly how we define the various cases.

    We now compute the generating function for each case separately.

    \textbf{Case (i):} These seaweed meanders are exactly those where the parts of $\lambda$ and $\mu$ are restricted to $\{1, 3\}$, so the contribution is $A ^ {\{1, 3\}}(x, y)$.

    \textbf{Case (ii):} Write $n = 2m$, where $m \in \NN$. Without loss of generality, we consider the seaweed meanders of the form $\frac{\alpha | \beta}{n}$.
    
    As mentioned above, we have edges $(v, 2m + 1 - v)$ for each $1 \le v \le m$. Now, contract each of these edges by rotating the vertices from $m + 1$ to $2m$ around the center of the diagram. This rotation sends the arcs which correspond to $\beta$ to arcs which lie below the midline, and reverses the order of the parts of $\beta$.
    Thus, upon contracting these edges, we obtain the seaweed meander $\frac{\alpha}{\beta ^ r}$.
    An example is shown in~\cref{fig:case-ii-contraction}.
    \begin{figure}[!ht]
        \begin{center}
            \resizebox{0.7\textwidth}{!}{%
                \begin{tikzpicture}[every node/.style={font=\ttfamily\Huge},every path/.style={draw, very thick},subfig/.style={}]
                    \node[subfig, anchor=east](A) at (0,0){
                        \begin{tikzpicture}
                            \foreach \x in {1,...,10}
                                \draw[fill] (\x,0) circle (0.15);
                            \foreach \x in {1,...,5}
                                \draw (\x,0) arc (-180:0:5.5-\x);
                            \draw (1, 0) arc (180:0:1);
                            \draw (7, 0) arc (180:0:1);
                        \end{tikzpicture}
                    };
                    \draw[->] (0.5, 0)--(2.5,0);
                    \node[subfig, anchor=west] (B) at (3,0){
                        \begin{tikzpicture}
                            \foreach \x in {1,...,5}
                                \draw[fill] (\x,0) circle (0.15);
                            \draw (1, 0) arc (180:0:1);
                            \draw (2, 0) arc (-180:0:1);
                        \end{tikzpicture}
                    };
                \end{tikzpicture}
        }
        \end{center}
        \caption{An example of the contraction in case (ii), with $n = 10$, $\alpha = (3, 1, 1)$, and $\beta = (1, 3, 1)$. The left shows the seaweed meander $\frac{\alpha | \beta}{n}$, and the right shows the seaweed meander $\frac{\alpha}{\beta ^ r}$.}
        \label{fig:case-ii-contraction}
    \end{figure}

    Thus, we have
    \[\ind\left(\frac{\alpha | \beta}{n}\right) = \ind\left(\frac{\alpha}{\beta ^ r}\right).\]
    Hence, the number of seaweed meanders of the form $\frac{\alpha | \beta}{n}$ with $\alpha, \beta \in \calC_m ^ {\{1, 3\}}$ and index $k$ is equal to $a_{m, k} ^ {\{1, 3\}}$.
    Thus, the generating function for this case is equal to
    \[2(A ^ {\{1, 3\}}(x ^ 2, y) - 1),\] where we remove the constant term since $m \ge 1$, and the factor of $2$ accounts for the symmetry in this case.
    
    \textbf{Case (iii):} As above, write $n = 2m$, with $m \ge 2$. By symmetry, each of the four patterns appears equally often, so without loss of generality, consider seaweed meanders of the form $\frac{\alpha | 3 | \beta}{n}$.

    Similar to the previous case, we contract the edges $(v, 2m + 1 - v)$ for each $1 \le v \le m$ by rotating the vertices $m + 1$ through $2m$ around the center. An example is shown in~\cref{fig:case-iii-contraction}.

    After performing the contraction, the $m$th vertex, which comes from identifying the $m$th and $(m + 1)$st vertices in the original seaweed meander, has degree $1$, and its only neighbor is vertex $m - 1$.
    Now, by contracting the edge $(m - 1, m)$, we obtain the seaweed meander $\frac{\alpha}{\beta ^ r | 1}$.
    \begin{figure}[!ht]
        \begin{center}
            \resizebox{0.9\textwidth}{!}{%
                \begin{tikzpicture}[every node/.style={font=\ttfamily\Huge},every path/.style={draw, very thick},subfig/.style={}]
                    \node[subfig, anchor=east](A) at (0,0){
                        \begin{tikzpicture}
                            \foreach \x in {1,...,10}
                                \draw[fill] (\x,0) circle (0.15);
                            \foreach \x in {1,...,5}
                                \draw (\x,0) arc (-180:0:5.5-\x);
                            \draw (1, 0) arc (180:0:1);
                            \draw (5, 0) arc (180:0:1);
                            \draw (8, 0) arc (180:0:1);
                        \end{tikzpicture}
                    };
                    \draw[->] (0.5, 0)--(2.5,0);
                    \node[subfig, anchor=west] (B) at (3,0){
                        \begin{tikzpicture}
                            \foreach \x in {1,...,5}
                                \draw[fill] (\x,0) circle (0.15);
                            \draw (1, 0) arc (180:0:1);
                            \draw (1, 0) arc (-180:0:1);
                            \draw (4,0)--(5,0);                            
                        \end{tikzpicture}
                    };
                    \draw[->] ($(B.east)+(0.5, 0)$)-- ++(2,0);
                    \node[subfig, anchor=west] (C) at ($(B.east) + (3, 0)$){
                        \begin{tikzpicture}
                            \foreach \x in {1,...,4}
                                \draw[fill] (\x,0) circle (0.15);
                            \draw (1, 0) arc (180:0:1);
                            \draw (1, 0) arc (-180:0:1);
                        \end{tikzpicture}
                    };
                \end{tikzpicture}
        }
        \end{center}
        \caption{An example of the contraction in case (iii), with $n = 10$, $\alpha = (3, 1)$, and $\beta = (3)$. The left shows the seaweed meander $\frac{\alpha | 3 | \beta}{n}$, the middle shows the seaweed meander after the first contraction step, and the right shows the seaweed meander $\frac{\alpha}{\beta ^ r | 1}$.}
        \label{fig:case-iii-contraction}
    \end{figure}

    Thus, we have
    \[\ind\left(\frac{\alpha | 3 | \beta}{n}\right) = \ind\left(\frac{\alpha}{\beta ^ r | 1}\right).\]
    Note that $\frac{\alpha}{\beta ^ r | 1}$ is a seaweed meander of size $m - 1$.

    By the above equality, the number of seaweed meanders of the form $\frac{\alpha | 3 | \beta}{n}$ with index $k$ is equal to $c_{m - 1, k}$. Thus, their generating function is equal to $x ^ 2C(x ^ 2, y)$.
    Hence, the generating function for this case is $4x ^ 2C(x ^ 2, y)$, where the factor of $4$ comes from the symmetry in this case.

    \textbf{Case (iv):} Consider a seaweed meander of the form $\frac{\alpha | 1 | \beta}{n}$.
    Note that the central vertex $\frac{n + 1}{2}$ has no adjacent edges. By removing this vertex, we get a graph which is isomorphic to the seaweed meander $\frac{\alpha | \beta}{n - 1}$. Hence,
    \[\ind\left(\frac{\alpha | 1 | \beta}{n}\right) = 1 + \ind\left(\frac{\alpha | \beta}{n - 1}\right).\]
    Using the result of case (ii), the generating function for this case is $2xy(A ^ {\{1, 3\}}(x ^ 2, y) - 1 - x ^ 2y)$. Note that we subtract $x ^ 2y$ since $n - 1 \ge 4$, while case (ii) counts seaweed meanders of size $2$.
    
    \textbf{Case (v):} The idea is similar to the previous case. The vertices $\frac{n - 1}{2}$ and $\frac{n + 3}{2}$ form a cycle of length $2$, and vertex $\frac{n + 1}{2}$ is isolated. By removing these vertices, we have
    \[\ind\left(\frac{\alpha | 3 | \beta}{n}\right) = 3 + \ind\left(\frac{\alpha | \beta}{n - 3}\right).\]
    Using the result of case (ii), the generating function for this case is $2x ^ 3y ^ 3(A ^ {\{1, 3\}}(x ^ 2, y) - 1)$.

    \textbf{Case (vi):} Write $n = 2m + 1$. Without loss of generality, consider a seaweed meander of the form $\frac{\alpha | 3 | \beta}{n}$, where $\abs{\alpha} = m$ and $\abs{\beta} = m - 2$. 

    We use a contraction argument similar to cases (ii) and (iii). An example is shown in~\cref{fig:case-iv-contraction}.

    Since $n$ is odd, the edges corresponding to the part of size $n$ in the seaweed meander are of the form $(v, 2m + 2 - v)$, for $1 \le v \le m$.
    We contract these edges by rotating vertices $m + 2$ to $2m + 1$ around the central vertex $m + 1$. The resulting graph has $m + 1$ vertices. Note that vertex $m + 1$ originally has degree $1$, with its only neighbor being vertex $m + 3$. After the edge contraction, its only neighbor is now vertex $m - 1$.

    Now, contract the edge $(m - 1, m + 1)$. Then the resulting graph is the seaweed meander $\frac{\alpha}{\beta ^ r | 1 | 1}$, since vertices $m - 1$ and $m$ do not have any edges remaining below the midline.
    \begin{figure}[!ht]
        \begin{center}
            \resizebox{0.9\textwidth}{!}{%
                \begin{tikzpicture}[every node/.style={font=\ttfamily\Huge},every path/.style={draw, very thick},subfig/.style={}]
                    \node[subfig, anchor=east](A) at (0,0){
                        \begin{tikzpicture}
                            \foreach \x in {1,...,11}
                                \draw[fill] (\x,0) circle (0.15);
                            \foreach \x in {1,...,5}
                                \draw (\x,0) arc (-180:0:6-\x);
                            \draw (2, 0) arc (180:0:1);
                            \draw (6, 0) arc (180:0:1);
                            \draw (9, 0) arc (180:0:1);
                        \end{tikzpicture}
                    };
                    \draw[->] (0.5, 0)--(2.5,0);
                    \node[subfig, anchor=west] (B) at (3,0){
                        \begin{tikzpicture}
                            \foreach \x in {1,...,6}
                                \draw[fill] (\x,0) circle (0.15);
                            \draw (2, 0) arc (180:0:1);
                            \draw (1, 0) arc (-180:0:1);
                            \draw (4, 0) arc (-180:0:1);
                        \end{tikzpicture}
                    };
                    \draw[->] ($(B.east)+(0.5, 0)$)-- ++(2,0);
                    \node[subfig, anchor=west] (C) at ($(B.east) + (3, 0)$){
                        \begin{tikzpicture}
                            \foreach \x in {1,...,5}
                                \draw[fill] (\x,0) circle (0.15);
                            \draw (2, 0) arc (180:0:1);
                            \draw (1, 0) arc (-180:0:1);
                        \end{tikzpicture}
                    };
                \end{tikzpicture}
        }
        \end{center}
        \caption{An example of the contraction in case (vi), with $n = 11$, $\alpha = (1, 3, 1)$, and $\beta = (3)$. The left shows the seaweed meander $\frac{\alpha | 3 | \beta}{n}$, the middle shows the seaweed meander after the first contraction step, and the right shows the seaweed meander $\frac{\alpha}{\beta ^ r | 1 | 1}$.}
        \label{fig:case-iv-contraction}
    \end{figure}

    Note that $\frac{\alpha}{\beta ^ r | 1 | 1}$ is a seaweed meander of size $m$. Thus, the number of seaweed meanders of the form $\frac{\alpha | 3 | \beta}{n}$ with index $k$ is equal to $d_{m, k}$, so their generating function is $xD(x ^ 2, y)$.

    Hence, the contribution of this case is equal to $4xD(x ^ 2, y)$, where the factor of $4$ comes from the symmetry noted above.

    \textbf{Case (vii):} We have $\ind\left(\frac{n}{n}\right) = n$, so the generating function for this case is
    \[x ^ 2y ^ 2 + \frac{x ^ 4y ^ 4}{1 - xy}.\]

    Summing over all cases, we have
    \begin{align*}
        A ^ {\{0, 1, 3\}}(x, y) &= A ^ {\{1, 3\}}(x, y) + 2(A ^ {\{1, 3\}}(x ^ 2, y) - 1) + 4x ^ 2C(x ^ 2, y) + 2xy(A ^ {\{1, 3\}}(x ^ 2, y) - 1 - x ^ 2y) \\
        &\phantom{=} + 2x ^ 3y ^ 3(A ^ {\{1, 3\}}(x ^ 2, y) - 1) + 4xD(x ^ 2, y) + x ^ 2y ^ 2 + \frac{x ^ 4y ^ 4}{1 - xy}.
    \end{align*}
    Substituting the values of $C(x, y)$ and $D(x, y)$ from~\cref{lem:cd-gen-func} and $A ^ {\{1, 3\}}(x, y)$ from~\cite[Theorem 3.6]{archer2025} gives the desired result.
\end{proof}

\subsection{The case $J = \{1, 6\}$}

We now compute the generating function $A ^ {\{1, 6\}}$. We derive a recursion using a process similar to the one given in the proof of~\cref{prop:rational-positive}.

\begin{proof}[Proof of~\cref{thm:16-gen-func}]
    We first define the following sequences, which correspond to the states of the algorithm from the proof of~\cref{prop:rational-positive}. In each row of the table below, the sequence on the left counts the number of seaweed meanders given by the pattern on the right that have size $n$ and index $k$.
    \[
        \begin{array}{|c | c c|}
            \hline
            e_{n, k} & \frac{\lambda}{1 | \mu} & \lambda \in \calC_n ^ {\{1, 6\}}, \mu \in \calC_{n - 1} ^ {\{1, 6\}} \\
            \hline
            f_{n, k} & \frac{\lambda}{6 | \mu} & \lambda \in \calC_n ^ {\{1, 6\}}, \mu \in \calC_{n - 6} ^ {\{1, 6\}} \\
            \hline
            p_{n, k} & \frac{\lambda}{1 | 1 | \mu} & \lambda \in \calC_n ^ {\{1, 6\}}, \mu \in \calC_{n - 2} ^ {\{1, 6\}} \\
            \hline
            q_{n, k} & \frac{\lambda}{1 | 1 | 1 | \mu} & \lambda \in \calC_n ^ {\{1, 6\}}, \mu \in \calC_{n - 3} ^ {\{1, 6\}} \\
            \hline
            r_{n, k} & \frac{\lambda}{1 | 2 | \mu} & \lambda \in \calC_n ^ {\{1, 6\}}, \mu \in \calC_{n - 3} ^ {\{1, 6\}} \\
            \hline
            s_{n, k} & \frac{\lambda}{2 | \mu} & \lambda \in \calC_n ^ {\{1, 6\}}, \mu \in \calC_{n - 2} ^ {\{1, 6\}} \\
            \hline
            t_{n, k} & \frac{\lambda}{2 | 1 | \mu} & \lambda \in \calC_n ^ {\{1, 6\}}, \mu \in \calC_{n - 3} ^ {\{1, 6\}} \\
            \hline
            u_{n, k} & \frac{\lambda}{2 | 1 | 1 | \mu} & \lambda \in \calC_n ^ {\{1, 6\}}, \mu \in \calC_{n - 4} ^ {\{1, 6\}} \\
            \hline
            v_{n, k} & \frac{\lambda}{2 | 2 | \mu} & \lambda \in \calC_n ^ {\{1, 6\}}, \mu \in \calC_{n - 4} ^ {\{1, 6\}} \\
            \hline
            w_{n, k} & \frac{\lambda}{4 | 1 | \mu} & \lambda \in \calC_n ^ {\{1, 6\}}, \mu \in \calC_{n - 5} ^ {\{1, 6\}} \\
            \hline
        \end{array}
    \]
    Again, define the corresponding generating functions for each of these sequences. Throughout this proof, we write $a_{n, k} = a_{n, k} ^ {\{1, 6\}}$ and use $A(x, y)$ as shorthand for $A ^ {\{1, 6\}}(x, y)$.

    Note that for $n \ge 1$, we have $a_{n, k} = e_{n, k} + f_{n, k}$.

    We derive recurrences by doing casework on the first part of $\lambda$. As in the proof of~\cref{lem:cd-gen-func}, upon choosing the first part of $\lambda$, we can contract edges to obtain a seaweed meander of smaller size. Some of these contraction arguments may also be deduced using the winding-down procedure of Coll, Magnant, and Wang~\cite{coll2012signaturemeander}.
    
    As in the proof of~\cref{lem:cd-gen-func}, let $\lambda'$ be $\lambda$ with the first part removed. We derive a recursion for each row of the table above.

    \begin{itemize}
        \item Consider $\frac{\lambda}{1 | \mu}$. If $\lambda_1 = 1$, then
        \[\ind\left(\frac{\lambda}{1 | \mu}\right) = 1 + \ind\left(\frac{\lambda'}{\mu}\right).\]
        If $\lambda_1 = 6$, then vertex $1$ has degree $1$, and its only neighbor is vertex $6$. Contracting the edge $(1, 6)$, we obtain the seaweed meander $\frac{4 | 1 | \lambda'}{\mu}$, so
        \[\ind\left(\frac{\lambda}{1 | \mu}\right) = \ind\left(\frac{4 | 1 | \lambda'}{\mu}\right) = \ind\left(\frac{\mu}{4 | 1 | \lambda'}\right).\]
        These equalities imply
        \[e_{n, k} = a_{n - 1, k - 1} + w_{n - 1, k}.\]
        \item Consider $\frac{\lambda}{6 | \mu}$. If $\lambda_1 = 1$, by the same argument as in the previous case, we have
        \[\ind\left(\frac{\lambda}{6 | \mu}\right) = \ind\left(\frac{\lambda'}{4 | 1 | \mu}\right).\]
        If $\lambda_1 = 6$, we have
        \[\ind\left(\frac{\lambda}{6 | \mu}\right) = 6 + \ind\left(\frac{\lambda'}{\mu}\right).\]
        Thus,
        \[f_{n, k} = a_{n - 6, k - 6} + w_{n - 1, k}.\]
        \item Consider $\frac{\lambda}{1 | 1 | \mu}$. If $\lambda_1 = 1$, then
        \[\ind\left(\frac{\lambda}{1 | 1 | \mu}\right) = 1 + \ind\left(\frac{\lambda'}{1 | \mu}\right).\]
        If $\lambda_1 = 6$, then vertices $1$ and $2$ have degree $1$ and their neighbors are vertices $6$ and $5$, respectively. Contracting the edges $(1, 6)$ and $(2, 5)$, we obtain the seaweed meander $\frac{2 | 1 | 1 | \lambda'}{\mu}$, since vertices $3$ and $4$ are connected by an arc for $\lambda$.
        This contraction gives
        \[\ind\left(\frac{\lambda}{1 | 1 | \mu}\right) = \ind\left(\frac{2 | 1 | 1 | \lambda'}{\mu}\right) = \ind\left(\frac{\mu}{2 | 1 | 1 | \lambda'}\right).\]
        Hence,
        \[p_{n, k} = e_{n - 1, k - 1} + u_{n - 2, k}.\]
        \item Consider $\frac{\lambda}{1 | 1 | 1 | \mu}$.
        If $\lambda_1 = 1$, then
        \[\ind\left(\frac{\lambda}{1 | 1 | 1 | \mu}\right) = 1 + \ind\left(\frac{\lambda'}{1 | 1 | \mu}\right).\]
        If $\lambda_1 = 6$, then vertices $1$, $2$, and $3$ have degree $1$ and their neighbors are vertices $6$, $5$, and $4$ respectively. Contracting the edges $(1, 6)$, $(2, 5)$, and $(3, 4)$, we obtain the seaweed meander $\frac{1 | 1 | 1 | \lambda'}{\mu}$.
        This gives
        \[\ind\left(\frac{\lambda}{1 | 1 | 1 | \mu}\right) = \ind\left(\frac{1 | 1 | 1 | \lambda'}{\mu}\right) = \ind\left(\frac{\mu}{1 | 1 | 1 | \lambda'}\right).\]
        Thus,
        \[q_{n, k} = p_{n - 1, k - 1} + q_{n - 3, k}.\]
        \item Consider $\frac{\lambda}{1 | 2 | \mu}$. If $\lambda_1 = 1$, then
        \[\ind\left(\frac{\lambda}{1 | 2 | \mu}\right) = 1 + \ind\left(\frac{\lambda'}{2 | \mu}\right).\]
        If $\lambda_1 = 6$, then contracting the edges $(1, 6)$, $(2, 5)$, and $(3, 4)$ (by rotating vertices $1$, $2$ and $3$ around the midpoint of vertices $3$ and $4$) gives the seaweed meander $\frac{2 | 1 | \lambda'}{\mu}$, so
        \[\ind\left(\frac{\lambda}{1 | 2 | \mu}\right) = \ind\left(\frac{2 | 1 | \lambda'}{\mu}\right) = \ind\left(\frac{\mu}{2 | 1 | \lambda'}\right).\]
        The two cases together give
        \[r_{n, k} = s_{n - 1, k - 1} + t_{n - 3, k}.\]
        \item Consider $\frac{\lambda}{2 | \mu}$. If $\lambda_1 = 1$, vertex $1$ has degree $1$, and contracting the edge $(1, 2)$ gives the meander $\frac{\lambda'}{1 | \mu}$, so we have
        \[\ind\left(\frac{\lambda}{2 | \mu}\right) = \ind\left(\frac{\lambda'}{1 | \mu}\right).\]
        If $\lambda_1 = 6$, contracting edges $(1, 6)$ and $(2, 5)$ gives the seaweed meander $\frac{2 | 2 | \lambda'}{\mu}$, so
        \[\ind\left(\frac{\lambda}{2 | \mu}\right) = \ind\left(\frac{2 | 2 | \lambda'}{\mu}\right) = \ind\left(\frac{\mu}{2 | 2 | \lambda'}\right).\]
        Thus,
        \[s_{n, k} = e_{n - 1, k} + v_{n - 2, k}.\]
        \item Consider $\frac{\lambda}{2 | 1 | \mu}$. If $\lambda_1 = 1$, contracting the edge $(1, 2)$ gives the seaweed meander $\frac{\lambda'}{1 | 1 | \mu}$. Therefore,
        \[\ind\left(\frac{\lambda}{2 | 1 | \mu}\right) = \ind\left(\frac{\lambda'}{1 | 1 | \mu}\right).\]
        If $\lambda_1 = 6$, contracting the edges $(1, 6)$, $(2, 5)$, and $(3, 4)$ gives the seaweed meander $\frac{1 | 2 | \lambda'}{\mu}$, so
        \[\ind\left(\frac{\lambda}{2 | 1 | \mu}\right) = \ind\left(\frac{1 | 2 | \lambda'}{\mu}\right) = \ind\left(\frac{\mu}{1 | 2 | \lambda'}\right).\]
        Hence,
        \[t_{n, k} = p_{n - 1, k} + r_{n - 3, k}.\]
        \item Consider $\frac{\lambda}{2 | 1 | 1 | \mu}$. If $\lambda_1 = 1$, contracting the edge $(1, 2)$ gives the seaweed meander $\frac{\lambda'}{1 | 1 | 1 | \mu}$, so
        \[\ind\left(\frac{\lambda}{2 | 1 | 1 | \mu}\right) = \ind\left(\frac{\lambda'}{1 | 1 | 1 | \mu}\right).\]
        If $\lambda_1 = 6$, then vertices $3$ and $4$ form a connected component which is a path. Removing these vertices and contracting the edges $(1, 6)$ and $(2, 5)$ gives the seaweed meander $\frac{2 | \lambda'}{\mu}$, so we have
        \[\ind\left(\frac{\lambda}{2 | 1 | 1 | \mu}\right) = 1 + \ind\left(\frac{2 | \lambda'}{\mu}\right) = 1 + \ind\left(\frac{\mu}{2 | \lambda'}\right).\]
        Thus,
        \[u_{n, k} = q_{n - 1, k} + s_{n - 4, k - 1}.\]
        \item Consider $\frac{\lambda}{2 | 2 | \mu}$. If $\lambda_1 = 1$, contracting the edge $(1, 2)$ gives the seaweed meander $\frac{\lambda'}{1 | 2 | \mu}$, so
        \[\ind\left(\frac{\lambda}{2 | 2 | \mu}\right) = \ind\left(\frac{\lambda'}{1 | 2 | \mu}\right).\]
        If $\lambda_1 = 6$, then vertices $3$ and $4$ form a connected component which is a cycle. As in the previous case, removing these vertices and contracting the edges $(1, 6)$ and $(2, 5)$ gives the seaweed meander $\frac{2 | \lambda'}{\mu}$, so
        \[\ind\left(\frac{\lambda}{2 | 2 | \mu}\right) = 2 + \ind\left(\frac{2 | \lambda'}{\mu}\right) = 2 + \ind\left(\frac{\mu}{2 | \lambda'}\right).\]
        These equalities imply
        \[v_{n, k} = r_{n - 1, k} + s_{n - 4, k - 2}.\]
        \item Consider $\frac{\lambda}{4 | 1 | \mu}$. If $\lambda_1 = 1$, contracting the edge $(1, 4)$ gives the seaweed meander $\frac{\lambda'}{2 | 1 | 1 | \mu}$, so
        \[\ind\left(\frac{\lambda}{4 | 1 | \mu}\right) = \ind\left(\frac{\lambda'}{2 | 1 | 1 | \mu}\right).\]
        If $\lambda_1 = 6$, then we have a path consisting of the edges $(5, 2)$, $(2, 3)$, $(3, 4)$, $(4, 1)$, and $(1, 6)$ coming from the part of size $6$ in $\lambda$ and parts of size $4$ and $1$ in the bottom composition. Contracting all of these edges to a single vertex, we obtain the seaweed meander $\frac{1 | \lambda'}{\mu}$, so
        \[\ind\left(\frac{\lambda}{4 | 1 | \mu}\right) = \ind\left(\frac{1 | \lambda'}{\mu}\right) = \ind\left(\frac{\mu}{1 | \lambda'}\right).\]
        Hence,
        \[w_{n, k} = u_{n - 1, k} + e_{n - 5, k}.\]
    \end{itemize}
    Converting all of these recurrences to their corresponding generating functions, we obtain the following system of equations:
    \begin{align*}
        A(x, y) &= 1 + E(x, y) + F(x, y), \\
        E(x, y) &= xyA(x, y) + xW(x, y), \\
        F(x, y) &= x ^ 6y ^ 6A(x, y) + xW(x, y), \\
        P(x, y) &= xyE(x, y) + x ^ 2U(x, y), \\
        Q(x, y) &= xyP(x, y) + x ^ 3Q(x, y), \\
        R(x, y) &= xyS(x, y) + x ^ 3T(x, y), \\
        S(x, y) &= xE(x, y) + x ^ 2V(x, y), \\
        T(x, y) &= xP(x, y) + x ^ 3R(x, y), \\
        U(x, y) &= xQ(x, y) + x ^ 4yS(x, y), \\
        V(x, y) &= xR(x, y) + x ^ 4y ^ 2S(x, y), \\
        W(x, y) &= xU(x, y) + x ^ 5E(x, y).
    \end{align*}
    Solving the system for $A(x, y)$ gives the claimed result.
\end{proof}

\bibliographystyle{alpha}
\bibliography{sources}

\end{document}